\tikzset{->-/.style={decoration={
  markings,
  mark=at position #1 with {\arrow{>}}},postaction={decorate}}}
\tikzset{middlearrow/.style={
        decoration={markings,
            mark= at position 0.5 with {\arrow{#1}} ,
        },
        postaction={decorate}
    }
}
\newcommand{\hackcenter}[1]{
 \xy (0,0)*{#1}; \endxy}
\newcommand{\sign}{\text{sign}}
\newcommand{\row}{\text{row}}
\newcommand{\col}{\text{col}}
\newcommand{\cont}{\text{cont}}
\theoremstyle{definition}
\newtheorem{thm}{Theorem}[section]
\newtheorem{cor}[thm]{Corollary}
\newtheorem{lem}[thm]{Lemma}
\newtheorem{prop}[thm]{Proposition}
\newtheorem{example}[thm]{Example}
\numberwithin{equation}{section}
\def\C{{\mathbbm C}}
\def\N{{\mathbbm N}}
\def\Z{{\mathbbm Z}}
\def\Q{{\mathbbm Q}}
\newcommand{\Hom}{{\rm Hom}}
\renewcommand{\to}{\rightarrow}
\def\det{\mathop{\rm det}}
\def\Ind{{\mathrm{Ind}}}
\def\lra{{\longrightarrow}}
\def\mc{\mathcal}
\def\shuffle{\,\raise 1pt\hbox{$\scriptscriptstyle\cup{\mskip
               -4mu}\cup$}\,}
\newcommand{\refequal}[1]{\xy {\ar@{=}^{#1}
(-1,0)*{};(1,0)*{}};
\endxy}
\title{The Hopf algebra of odd symmetric functions}
      \author{
      Alexander P. Ellis and Mikhail Khovanov}
\date{September 18, 2013}
\newcommand{\sym}{\mathrm{\Lambda}}
\newcommand{\oplusop}[1]{{\mathop{\oplus}\limits_{#1}}}
\begin{document}
%

\maketitle

\begin{abstract}
We consider a $q$-analogue of the standard bilinear form on the commutative ring of symmetric functions.  The $q=-1$ case leads to a $\Z$-graded Hopf superalgebra which we call the algebra of odd symmetric functions.  In the odd setting we describe counterparts of the elementary and complete symmetric functions, power sums, Schur functions, and combinatorial interpretations of associated change of basis relations.
\end{abstract}

\setcounter{tocdepth}{2} \tableofcontents

\newpage

%
\section{Introduction}
%

\subsection{Symmetric groups and categorification}

The ring $\sym$ of symmetric functions plays a fundamental role in several 
areas of mathematics. 
It decategorifies the representation theory of the symmetric 
groups, for it can be identified with the direct sum of Grothendieck groups of 
group rings of the symmetric groups: 
$$ \sym \ \cong \ \oplusop{n\ge 0} K_0(\C[S_n]),$$ 
where any characteristic $0$ field can be used instead of $\C$. Multiplication 
and comultiplication in $\sym$ come from induction and restriction 
functors for inclusions $S_n\times S_m\subset S_{n+m}$, and Schur functions 
are the images of simple $S_n$-modules in the Grothendieck 
group under this isomorphism.  
The elementary and complete symmetric functions $e_n$ and $h_n$ are the 
images of the sign and trivial representations of $S_n$, respectively. 

The ring $\sym(n)$ of symmetric functions in $n$ variables is 
the quotient of $\sym$ by the ideal  generated by $e_m,$ 
over all $m>n$; it is  naturally isomorphic to the representation ring of 
polynomial representations of $GL(n)$, with Schur functions $s_{\lambda}$ 
for partitions $\lambda$ with at most $n$ rows given by the symbols of the corresponding 
irreducible representations of $GL(n)$.  Good accounts of the above in the literature can be found in \cite{Bump,Fulton,Mac,Stanley2}.

These structures are deep and serve as a foundation as well as a model 
example for many further developments in representation theory. 
One such development starts with the nilHecke ring $NH_n$, the 
ring of endomorphisms of $\Z[x_1, \dots, x_n]$ generated by 
the divided difference operators $\partial_i$ and the operators of multiplication 
by $x_i$. This ring is related to the geometry of flag varieties; see \cite{Kum} 
and references there. 
More recently, $NH_n$ appeared in the categorification of quantum $\mathfrak{sl}_2$ 
\cite{Lau1,Rou2}; cyclotomic quotients of this ring categorify weight spaces of 
irreducible representations of quantum $\mathfrak{sl}_2$. The ring $NH_n$ 
admits a graphical intepretation, with its elements described by diagrams on $n$ strands. 
The generator $x_i$ is presented by $n$ vertical strands with a dot on the $i$-th 
strand, the generator $\partial_i$ by the intersection of the $i$-th  and $(i+1)$-st 
strands. See \cite{KL1,Lau1} for some uses of this diagrammatic representation.

The subring of $NH_n$ generated by the divided difference operators 
$\partial_i$ is known as the nilCoxeter ring; see \cite{kho2} and references therein. 
An odd counterpart of the nilCoxeter ring, the LOT (Lipshitz-Ozsv\'{a}th-Thurston) ring, 
recently appeared in the 
bordered Heegaard Floer homology \cite{LOT} and should play a role in 
the categorification of quantum superalgebras \cite{KhoGL12}. In this odd version 
far away crossings ($\partial_i$ and $\partial_j$ for $|i-j|>1$) 
anticommute rather than commute, with the other defining relations 
remaining the same ($\partial_i^2=0,$ $\partial_i \partial_{i+1}\partial_i = 
\partial_{i+1}\partial_i \partial_{i+1}$). It is natural to add dot generators 
to the LOT ring, making all far away generators (dots and crossings) anticommute, 
and suitably modifying the other defining relations for the nilHecke algebra. 
The resulting ``odd nilHecke'' algebra $ONH_n$ on $n$-strands shares 
many similarities with the nilHecke algebra $NH_n$; for instance, it 
acts on the space of polynomials in $n$ anticommuting variables via 
multiplication by these variables and by odd analogues of the divided difference 
operators.  The odd nilHecke algebra and its action on skew-symmetric polynomials
appear in the very recent work of Kang, Kashiwara, and Tsuchioka \cite{KKT}, 
where it is used, in particular, as a building block for super-analogues of the 
quiver Hecke algebras.  In the forthcoming 
paper \cite{EKL} we will develop the odd counterpart of the diagrammatical calculus 
\cite{KLMS} for $NH_n$.

The nilHecke algebra $NH_n$ is isomorphic to the matrix algebra 
of size $n!\times n!$ with coefficients in the ring of symmetric functions 
in $n$ variables, and the combinatorics of symmetric functions can be rethought 
from the viewpoint of $NH_n$; see \cite{KLMS,Man} for instance. 
A straightforward argument shows that the odd nilHecke algebra is isomorphic 
to the matrix algebra of size $n!\times n!$ with coefficients in the ring $\sym^{-1}(n)$ 
with generators $e_1, \dots, e_n$ and defining relations
\begin{eqnarray*} 
  e_i e_j  & = & e_j e_i \ \ \  \mbox{if} \ \ \  i+j \ \ \ \mbox{is even},  \\
  e_i e_j + (-1)^i e_j e_i  & = &  e_{j-1}e_{i+1} + (-1)^i e_{i+1} e_{j-1} \ \ \ 
 \mbox{if} \ \ \  i+j \ \ \ \mbox{is odd}. 
\end{eqnarray*} 
We would like to think of $\sym^{-1}(n)$ as the odd counterpart of the 
ring $\sym(n)$ of $n$-variable symmetric functions and 
will pursue this approach in \cite{EKL}, defining the 
odd Schur function basis of $\sym^{-1}(n)$ via its embedding in $ONH_n$, 
and showing that cyclotomic quotients of $ONH_n$ are Morita equivalent to 
suitable quotients of $\sym^{-1}(n)$ which should be odd counterparts 
of the cohomology rings of complex Grassmannians. The ring $\sym^{-1}(n)$ 
can be thought of as an odd counterpart of the cohomology ring of 
$Gr(n,\infty)$, the Grassmannian of complex $n$-planes in $\C^{\infty}$. 
We prefer to use ``odd'' rather than ``super'' here, since 
$\sym^{-1}(n)$ is not isomorphic to the cohomology ring of any 
super topological space and hints at genuinely quantum geometry.  

\vspace{0.07in}

\subsection{Outline of this paper}

By analogy with the even case, if we send $n$ to infinity, the resulting 
limit algebra $\sym^{-1}$ should be a Hopf superalgebra, the odd analogue 
of the algebra $\sym^1$ of symmetric functions (which was called $\sym$ above). In the present paper we 
develop an approach to $\sym^{-1}$ that bypasses odd nilHecke algebras. 
Fix a scalar $q$.  We first define a $q$-Hopf algebra $\sym'$ on generators $h_1, h_2, \dots \  .$ 
A $q$-Hopf algebra is a Hopf algebra in the category of graded vector spaces 
with the braiding given by $q$ to the power the product of the degrees
(in the terminology of \cite{AguiarMahajan}). The $q$-Hopf algebra $\sym'$ appears in 
Section 17.3.4 of \cite{AguiarMahajan} and has a 
natural bilinear form, which is nondegenerate over $\Q(q)$. The bilinear form degenerates 
for special values of $q$, and we can form the quotient of $\sym'$ by 
the kernel of the bilinear form for any such special value.  We denote this quotient by $\sym^q$ or just $\sym$ when $q$ is understood from the context (usually $q=-1$).  The case $q=1$ results 
in the familiar Hopf algebra of symmetric functions. Here we study the 
next case in simplicity, that of $q=-1$. The resulting quotient $\sym^{-1}$ 
is a Hopf superalgebra which is  neither cocommutative nor commutative 
as a superalgebra.  Subsections \ref{subsec-q-vector-spaces} and \ref{subsec-e-h} are devoted to establishing this basic language and the bialgebra structure on $\sym^{-1}$, as well as setting up a graphical interpretation of the bilinear form.

The Hopf algebra $\sym^1$ has an automorphism $\omega$ which swaps $e_n$ and $h_n$.  Since $\sym^1$ has the extremely rigid structure of a positive self-adjoint Hopf algebra \cite{ZelFinite}, this automorphism is uniquely determined by the fact that it preserves the bilinear form, preserves the set of positive elements, and switches $e_2$ and $h_2$.  On the categorified level, applying $\omega$ amounts to taking the tensor product with the sign representation.  The case of $\sym^{-1}$ is more complicated, due in part to the lack of commutativity and cocommutativity and the lack of a positivity structure.  In Subsection \ref{subsec-automorphisms}, we study several (anti-)automorphisms, some involutory, each of which bears some of the properties of $\omega$.  We obtain the antipode $S$ from these, completing the structure of a Hopf superalgebra on $\sym^{-1}$.

The $q$-Hopf algebra $\sym'$ is isomorphic to the graded dual of the $q$-Hopf algebra of quantum quasi-symmetric functions introduced by Thibon and Ung \cite{ThibonUng}.  In Subsection \ref{subsec-comparison-qsymq}, we explain this isomorphism.

The classical monomial sums in $\sym^1$ derive much of their importance from the fact that they form a dual basis to the basis of complete symmetric functions.  In Subsection \ref{subsec-dual-bases}, we introduce the dual bases to the bases of odd complete and elementary symmetric functions, which we call the odd monomial and forgotten symmetric functions.  Signed analogues of the classical combinatorial relations between the complete, elementary, monomial, and forgotten bases are derived as well.

Any Hopf algebra $H$ has an associated Lie algebra of primitives $P(H)$, consisting of those $x\in H$ such that $\Delta(x)=x\otimes1+1\otimes x$.  For example, if $\mathfrak{g}$ is a Lie algebra, then the primitives of its universal enveloping algebra are a copy of $\mathfrak{g}$ itself.  Under certain conditions on $H$ which are satisfied in the cases $H=\sym^1,\sym^{-1},\sym'$, the space of primitives can be computed as the perpendicular space to $I^2$, where $I$ is the algebra ideal of positively graded elements \cite{ZelFinite}.  In $\sym^1$, the primitives are the classical power sum functions; in the characteristic zero case, viewed as characters of symmetric groups, their products represent scalar multiples of the indicator functions on corresponding conjugacy classes of $S_n$.  The space of primitives of $\sym^1$ is one-dimensional in each positive degree.  In Subsection \ref{subsec-primitives}, we compute and study the space of primitives of $\sym^{-1}$.  This space is one-dimensional in even degrees and in degree 1, and zero in other degrees.  Its even degree parts generate the center (which coincides with the supercenter) of $\sym^{-1}$ as an algebra.

The most remarkable basis of $\sym^1$ is the basis of Schur functions.  In terms of power series they are described as generating functions for semistandard Young tableaux of a given shape or as a ratio of determinants; in terms of the nilHecke ring they are the result of applying the longest divided difference operator to a single monomial; in terms of the symmetric group they are the images of the characters of irreducible representations; and in terms of $GL(n)$, they are the characters of the irreducible polynomial representations.  As a result, they are an orthonormal integral basis of $\sym^1$ \cite{Mac}.  Subsection \ref{subsec-schur} constructs odd analogues of the Schur functions from the combinatorial perspective.  These functions are defined by using odd Kostka numbers which count semistandard Young tableaux with signs, a slight generalization of a notion studied by Stanley \cite{StanleySignImbalance} and others \cite{Kim}, \cite{Lam2}.  Using results of Reifegerste \cite{Reifegerste} and Sj\"{o}strand \cite{Sjostrand} on the behavior of talbeau sign under the RSK correspondence, we are able to prove that the odd Schur functions are all orthogonal and of norm $\pm1$.  In the sequel to this paper \cite{EKL}, we and Aaron Lauda construct the odd Schur functions inside the odd nilHecke ring in terms of odd divided difference operators.

In Section \ref{sec-rsk}, we give a new proof of the signed RSK correspondence.  Finally, Section \ref{sec-data} is an appendix consisting of some numerical data related to $\sym'$ and $\sym^{-1}$.

The definition of $\sym^{-1}$ is just one step beyond the work \cite{ThibonUng}
of Thibon and Ung. It seems that, despite the multitude of papers on noncommutative 
and quasi-symmetric functions, very little has been written about the quantum 
case, and we did not find any mention of $\sym^{-1}$ and its interesting 
structure in the literature.

%
\subsection{Acknowledgments}
%

A.P.E. would like to thank the NSF for support under the 
Graduate Research Fellowship Program.
M.K. was a visitor to the Simons Center for Geometry and Physics
during the spring semester of 2011. He would like to thank the Center 
and its director John Morgan for their generous support and hospitality.  M.K.
received partial support from the NSF grant DMS-1005750 while working on this paper.   Both authors thank Darij Grinberg and Aaron Lauve for corrections to a previous version of this paper, as well as Thomas Lam for pointing out previous work on Young tableau signs \cite{Kim}, \cite{Lam2}, \cite{Reifegerste}, \cite{Sjostrand}, \cite{StanleySignImbalance} and the connection with the spin-weight domino symmetric functions of Carr\'{e} and Leclerc \cite{CarreLeclerc}, \cite{Lam}.

%
\section{The definition of the $q$-Hopf algebra $\sym$}
%

%
\subsection{The category of $q$-vector spaces and a bilinear form}\label{subsec-q-vector-spaces}
%

We work over a commutative ground ring $\Bbbk$.  
Let $\sym'$ be a free associative $\Z$-graded 
$\Bbbk$-algebra with generators $h_1, h_2, \dots$
(it is convenient to assume $h_0=1$ and $h_i=0$ for $i<0$) 
of degrees $\deg(h_n) = n$. The grading 
gives a vector space decomposition 
$$ \sym' = \oplusop{n\ge 0} \sym'_n.$$ 
We choose $q\in \Bbbk$ and define a multiplication in $\sym'^{\otimes 2}$ by 
$$ (x_1 \otimes x_2) (y_1 \otimes y_2) = q^{\deg(x_2)\deg(y_1)} x_1 y_1 \otimes 
x_2 y_2$$ 
on homogeneous elements. If $q$ is invertible, consider the braided monoidal 
category $\Bbbk\mathrm{-gmod}_q$ 
of $\Z$-graded $\Bbbk$-modules with the braiding functor given by 
$$ V\otimes W \lra W \otimes V, \quad v\otimes w \longmapsto q^{\deg(v)\deg(w)} 
 w\otimes v $$ 
on homogeneous elements. The braiding structure is symmetric if $q\in \{ 1, -1\}$. 
For $q$ not invertible, the above gives $\Bbbk\text{-gmod}_q$ a lax braided structure.   Following \cite{AguiarMahajan}, we refer to bialgebra and Hopf algebra objects in $\Bbbk\text{-gmod}_q$ as $q$-bialgebras and $q$-Hopf algebras, respectively.

The algebra $\sym'$ can be made into a $q$-bialgebra by defining the comultiplication on generators to be
$$ \Delta(h_n) = \sum_{m=0}^n h_m \otimes h_{n-m}. $$  
Given our convention on the $h$'s, we can also write 
$$ \Delta(h_n) = \sum_{m\in \Z} h_m \otimes h_{n-m}. $$ 
The braiding structure implies, for instance, that 
$$ \Delta(h_n h_k) = \sum_{m,r} (h_m \otimes h_{n-m})(h_r \otimes h_{k-r})  
 =  \sum_{m,r} q^{(n-m)r} h_m h_r \otimes h_{n-m}h_{k-r}.  $$
The counit is the obvious one, with $\epsilon(x) =0$ if $\deg(x)>0$.   The $q$-bialgebra $\sym'$ is cocommutative if and only if $q=1$.  

\vspace{0.07in}

For a sequence (or composition) $\alpha=(a_1, \dots, a_k)$, define 
$h_\alpha = h_{a_1}\dots h_{a_k}$. Let also $|\alpha| = a_1+\dots + a_k$ 
and $ S_{\alpha} = S_{a_1}\times \dots \times S_{a_k} \subset S_{|\alpha|}$ 
be the parabolic subgroup of the symmetric group $S_{|\alpha|}$ associated to the composition $\alpha$. 

For compositions $\alpha,\beta$ such that $|\alpha|=|\beta|=n$, denote by 
$${}_{\beta}S_{\alpha} \ = \  S_{\beta} \backslash  S_n / S_{\alpha} $$ 
the set of double cosets of the subgroups $S_{\alpha}$ and $S_{\beta}$ in $S_n$. 
Each double coset $c$ has a unique minimal length representative $\sigma(c)\in S_n$. 
Denote by $\ell(c) = \ell(\sigma(c))$ the length of this representative. 

Elements of $S_n$ admit a graphical description. A permutation 
$\sigma\in S_n$  can be presented by $n$ curves in the plane 
connecting points $1, \dots, n$ on a horizontal line to points 
$\sigma(1),\dots, \sigma(n)$ on a parallel line above the former line such that
\begin{itemize}
\item The curves have no critical points with respect to the height function (that is, they never flatten out or turn around),
\item Any two curves intersect at most once (the curves starting at $i<j$ intersect if and only if $\sigma(i)>\sigma(j)$),
\item There are no triple intersections.
\end{itemize}
Such diagrams are considered as their combinatorial type, that is, up to rel boundary 
homotopy 
through diagrams satisfying the conditions above.  If the curves are in general position, then the length 
$\ell(\sigma)$ is the number of intersection points of these curves, equal to the 
number of pairs $i<j$ such that $\sigma(i)>\sigma(j)$.
 
\begin{equation*}
\hackcenter{\begin{tikzpicture}[scale=.7]
    \draw[thick] (1,0) [out=90, in=-90] to (2,3);
    \draw[thick] (2,0) [out=90, in=-90] to (4,3);
    \draw[thick] (3,0) [out=90, in=-90] to (2.5,1.5) [out=90, in=-90] to (3,3);
    \draw[thick] (4,0) [out=90, in=-90] to (1,3);
    \node at (1,-.5) {$1$};
    \node at (2,-.5) {$2$};
    \node at (3,-.5) {$3$};
    \node at (4,-.5) {$4$};
    \node at (1,3.5) {$1$};
    \node at (2,3.5) {$2$};
    \node at (3,3.5) {$3$};
    \node at (4,3.5) {$4$};
\end{tikzpicture}}
\qquad\sigma=(1,2,4)\qquad\ell(\sigma)=4
\end{equation*}

Minimal double coset representatives $\sigma(c),$ $c\in {}_{\beta} S_{\alpha}$ 
are singled out by the following condition. Draw intervals (or platforms) of ``size'' 
$a_1, \dots, a_k$ 
from left to right at the bottom of the permutation diagram and platforms 
$b_1, \dots, b_r$ at the top ($\alpha=(a_1, \dots, a_k)$, $\beta=(b_1,\dots,b_r)$), 
so that the 
first $a_1$ lines from the left start off at the first bottom platform, the next 
$a_2$ lines at the second platform, and so forth, and likewise for the top platforms. Then 
$\sigma= \sigma(c)$ for some double coset $c$ if and only if any two lines that 
start or end in the same platform do not intersect. An example is depicted below, 
with $\alpha=(4,1,2,2),$ $\beta=(2,5,2)$, $\ell(\sigma) =9$. 

\begin{equation*}
\hackcenter{\begin{tikzpicture}[scale=.7]
    \draw[thick] (1,0) [out=90, in=-90] to (1,4);
    \draw[thick] (2,0) [out=90, in=-90] to (3,4);
    \draw[thick] (3,0) [out=90, in=-90] to (4,4);
    \draw[thick] (4,0) [out=90, in=-90] to (5,4);
    \draw[thick] (5,0) [out=90, in=-90] to (8,4);
    \draw[thick] (6,0) [out=90, in=-90] to (6,4);
    \draw[thick] (7,0) [out=90, in=-90] to (9,4);
    \draw[thick] (8,0) [out=90, in=-90] to (2, 2.5) [out=90, in=-90] to (2,4);
    \draw[thick] (9,0) [out=90, in=-90] to (9, 1.5) [out=90, in=-90] to (7,4);
    \filldraw[draw=black, fill=white] (.75,-.25) rectangle (4.25,.25);
    \filldraw[draw=black, fill=white] (4.75,-.25) rectangle (5.25,.25);
    \filldraw[draw=black, fill=white] (5.75,-.25) rectangle (7.25,.25);
    \filldraw[draw=black, fill=white] (7.75,-.25) rectangle (9.25,.25);
    \filldraw[draw=black, fill=white] (.75,3.75) rectangle (2.25,4.25);
    \filldraw[draw=black, fill=white] (2.75,3.75) rectangle (7.25,4.25);
    \filldraw[draw=black, fill=white] (7.75,3.75) rectangle (9.25,4.25);
\end{tikzpicture}}
\end{equation*}

If $\alpha=(1,2,1)$ and $\beta= (2,2)$, there are four cosets, with minimal length
representatives $1, (2,3,4), (1,3,2), $ and $(1,3,4,2)$: 

\begin{equation*}
\hackcenter{\begin{tikzpicture}[scale=.5]
    \draw[thick] (1,0) [out=90, in=-90] to (1,2);
    \draw[thick] (2,0) [out=90, in=-90] to (2,2);
    \draw[thick] (3,0) [out=90, in=-90] to (3,2);
    \draw[thick] (4,0) [out=90, in=-90] to (4,2);
    \filldraw[draw=black, fill=white] (.75,-.25) rectangle (1.25,.25);
    \filldraw[draw=black, fill=white] (1.75,-.25) rectangle (3.25,.25);
    \filldraw[draw=black, fill=white] (3.75,-.25) rectangle (4.25,.25);
    \filldraw[draw=black, fill=white] (.75,1.75) rectangle (2.25,2.25);
    \filldraw[draw=black, fill=white] (2.75,1.75) rectangle (4.25,2.25);
\end{tikzpicture}\quad
\begin{tikzpicture}[scale=.5]
    \draw[thick] (1,0) [out=90, in=-90] to (1,2);
    \draw[thick] (2,0) [out=90, in=-90] to (3,2);
    \draw[thick] (3,0) [out=90, in=-90] to (4,2);
    \draw[thick] (4,0) [out=90, in=-90] to (2,2);
    \filldraw[draw=black, fill=white] (.75,-.25) rectangle (1.25,.25);
    \filldraw[draw=black, fill=white] (1.75,-.25) rectangle (3.25,.25);
    \filldraw[draw=black, fill=white] (3.75,-.25) rectangle (4.25,.25);
    \filldraw[draw=black, fill=white] (.75,1.75) rectangle (2.25,2.25);
    \filldraw[draw=black, fill=white] (2.75,1.75) rectangle (4.25,2.25);
\end{tikzpicture}\quad
\begin{tikzpicture}[scale=.5]
    \draw[thick] (1,0) [out=90, in=-90] to (3,2);
    \draw[thick] (2,0) [out=90, in=-90] to (1,2);
    \draw[thick] (3,0) [out=90, in=-90] to (2,2);
    \draw[thick] (4,0) [out=90, in=-90] to (4,2);
    \filldraw[draw=black, fill=white] (.75,-.25) rectangle (1.25,.25);
    \filldraw[draw=black, fill=white] (1.75,-.25) rectangle (3.25,.25);
    \filldraw[draw=black, fill=white] (3.75,-.25) rectangle (4.25,.25);
    \filldraw[draw=black, fill=white] (.75,1.75) rectangle (2.25,2.25);
    \filldraw[draw=black, fill=white] (2.75,1.75) rectangle (4.25,2.25);
\end{tikzpicture}\quad
\begin{tikzpicture}[scale=.5]
    \draw[thick] (1,0) [out=90, in=-90] to (3,2);
    \draw[thick] (2,0) [out=90, in=-90] to (1,2);
    \draw[thick] (3,0) [out=90, in=-90] to (4,2);
    \draw[thick] (4,0) [out=90, in=-90] to (2,2);
    \filldraw[draw=black, fill=white] (.75,-.25) rectangle (1.25,.25);
    \filldraw[draw=black, fill=white] (1.75,-.25) rectangle (3.25,.25);
    \filldraw[draw=black, fill=white] (3.75,-.25) rectangle (4.25,.25);
    \filldraw[draw=black, fill=white] (.75,1.75) rectangle (2.25,2.25);
    \filldraw[draw=black, fill=white] (2.75,1.75) rectangle (4.25,2.25);
\end{tikzpicture}\quad}
\end{equation*}

Fix $q\in \Bbbk$ and 
define a symmetric $\Bbbk$-bilinear form on $\sym'$ taking values in $\Bbbk$ by 
\begin{equation}
\label{eqn-bilinform} 
(h_{\beta}, h_{\alpha}) = \begin{cases}\sum_{c\in {}_{\beta} S_{\alpha}}
q^{\ell(c)}&\text{if }|\alpha|=|\beta|,\\ 
0&\text{otherwise}.\end{cases}
\end{equation}
  
The weight spaces $\sym'_n$ of degree $n$ are pairwise orthogonal relative to this form. 

\vspace{0.07in} 

\begin{example} The inner product $(h_2h_2, h_1h_2h_1) = 1 + 2 q^2 + q^3$; see 
the four diagrams above. Each double coset in ${}_{\beta} S_{\alpha}$ contributes 
$q$ to the power equal to the number of crossings in the diagram of the coset. 
\end{example}

\vspace{0.07in} 

We extend this form to $\sym'^{\otimes 2}$ by 
\begin{equation}\label{whereisq} 
(y_1\otimes y_2, x_1\otimes x_2) = (y_1,x_1) (y_2, x_2) .
\end{equation}  
One may wonder why the factor $q^{\deg(y_2)\deg(x_1)}$ does not appear in 
this formula given that $y_2$ seems to move past $x_1$.  
Powers of $q$ are also absent 
in \cite[Proposition 1.2.3]{Lus4} in a very similar situation.  
The graphical interpretation of the bilinear form provides a reason: 
we think of the tensor product of elements as occurring horizontally, 
while the diagrams 
used in the computation of the bilinear form occur vertically. The picture below 
shows a diagram contributing to the inner product $(y_1\otimes y_2, x_1\otimes x_2)$ 
for suitable $x_1,x_2,y_1,y_2$. This diagram is a disjoint union of two diagrams, 
the one on the left contributing to $(y_1,x_1)$, the right one contributing to 
$(y_2,x_2)$. 

\begin{equation*}
\hackcenter{\begin{tikzpicture}[scale=0.7]
    \draw[thick] (1,0) [out=90, in=-90] to (3,3);
    \draw[thick] (2,0) [out=90, in=-90] to (1.5,1.5) [out=90, in=-90] to (2,3);
    \draw[thick] (3,0) [out=90, in=-90] to (4,3);
    \draw[thick] (4,0) [out=90, in=-90] to (1,3);
    \draw[thick] (6,0) [out=90, in=-90] to (8,3);
    \draw[thick] (7,0) [out=90, in=-90] to (6,3);
    \draw[thick] (8,0) [out=90, in=-90] to (9,3);
    \draw[thick] (9,0) [out=90, in=-90] to (7,3);
    \filldraw[draw=black, fill=white] (.75,-.25) rectangle (1.25,.25);
    \filldraw[draw=black, fill=white] (1.75,-.25) rectangle (2.25,.25);
    \filldraw[draw=black, fill=white] (2.75,-.25) rectangle (3.25,.25);
    \filldraw[draw=black, fill=white] (3.75,-.25) rectangle (4.25,.25);
    \filldraw[draw=black, fill=white] (5.75,-.25) rectangle (6.25,.25);
    \filldraw[draw=black, fill=white] (6.75,-.25) rectangle (8.25,.25);
    \filldraw[draw=black, fill=white] (8.75,-.25) rectangle (9.25,.25);
    \filldraw[draw=black, fill=white] (.75,2.75) rectangle (1.25,3.25);
    \filldraw[draw=black, fill=white] (1.75,2.75) rectangle (2.25,3.25);
    \filldraw[draw=black, fill=white] (2.75,2.75) rectangle (4.25,3.25);
    \filldraw[draw=black, fill=white] (5.75,2.75) rectangle (7.25,3.25);
    \filldraw[draw=black, fill=white] (7.75,2.75) rectangle (8.25,3.25);
    \filldraw[draw=black, fill=white] (8.75,2.75) rectangle (9.25,3.25);
    \node at (2.5,-1) {$x_1$};
    \node at (2.5,4) {$y_1$};
    \node at (7.5,-1) {$x_2$};
    \node at (7.5,4) {$y_2$};
\end{tikzpicture}}
\end{equation*}

No strands from distinct tensor factors ever cross, justifying 
equation (\ref{whereisq}). From this viewpoint it 
would be more natural to write $\binom{y}{x}$ rather than $(y,x)$; we will not do so 
for obvious reasons. In this notation, equation (\ref{whereisq}) would become 
$$ \binom{y_1\otimes y_2}{x_1\otimes x_2} \ = \ 
 \binom{y_1}{x_1} \ \binom{y_2}{x_2} ,$$ 
with no change in the relative position of the four variables on the two sides 
of the equation. 

\begin{prop} For all $x,y_1,y_2\in\sym'$, 
\begin{equation}
(y_1\otimes y_2,\Delta(x)) = (y_1y_2,x). 
\end{equation}
\end{prop}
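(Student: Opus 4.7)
The plan is to reduce by bilinearity to the case of basis elements, then expand both sides into sums of $q$-weighted double cosets via the platform picture of Section~\ref{subsec-q-vector-spaces}, and finally exhibit a length-preserving bijection. By bilinearity in each argument, it suffices to treat $y_1=h_\beta$, $y_2=h_\gamma$, $x=h_\alpha$ for compositions $\alpha=(a_1,\dots,a_k)$, $\beta$, $\gamma$ with $|\beta|+|\gamma|=|\alpha|$, since both sides vanish otherwise on degree grounds.

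Since $\Delta$ is a morphism of $q$-algebras and the braided product on $\sym'\otimes\sym'$ reads $(u_1\otimes u_2)(v_1\otimes v_2)=q^{\deg(u_2)\deg(v_1)}u_1v_1\otimes u_2v_2$, an induction on $k$ would yield
\[
\Delta(h_\alpha) \;=\; \sum_{\vec m}\, q^{N(\vec m,\vec a)}\, h_{\vec m}\otimes h_{\vec a-\vec m},
\qquad
N(\vec m,\vec a):=\sum_{i<j}(a_i-m_i)\,m_j,
\]
summed over $\vec m=(m_1,\dots,m_k)$ with $0\le m_i\le a_i$. Pairing with $h_\beta\otimes h_\gamma$ via (\ref{whereisq}) and then unpacking each factor via (\ref{eqn-bilinform}) rewrites
\[
(y_1\otimes y_2,\Delta(x))
\;=\;
\sum_{\vec m}\;\sum_{c_1\in{}_\beta S_{\vec m}}\;\sum_{c_2\in{}_\gamma S_{\vec a-\vec m}}
q^{N(\vec m,\vec a)+\ell(c_1)+\ell(c_2)},
\]
while $(y_1 y_2, x)=(h_{(\beta,\gamma)},h_\alpha)=\sum_{c\in{}_{(\beta,\gamma)}S_\alpha}q^{\ell(c)}$.

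The equality then reduces to producing a length-compatible bijection $c\leftrightarrow(\vec m,c_1,c_2)$ satisfying $\ell(c)=\ell(c_1)+\ell(c_2)+N(\vec m,\vec a)$. Draw $\sigma(c)$ with bottom platforms of sizes $\vec a$ and top platforms of sizes $(\beta,\gamma)$, and let $m_i$ be the number of strands out of the $i$-th bottom platform that land in the first $|\beta|$ top positions (the ``$y_1$-region''). Because no two strands from a common bottom platform cross in $\sigma(c)$, the $m_i$ $y_1$-bound strands must occupy the leftmost $m_i$ slots of their platform, and the remaining $a_i-m_i$ $y_2$-bound strands the rightmost. Restricting to the $y_1$-bound (resp.\ $y_2$-bound) strands yields a minimal representative $\sigma(c_1)\in{}_\beta S_{\vec m}$ (resp.\ $\sigma(c_2)\in{}_\gamma S_{\vec a-\vec m}$), and the construction clearly inverts by juxtaposition. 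Any crossing of $\sigma(c)$ is either internal to one of the two sub-diagrams or pairs a $y_2$-bound strand from bottom platform $i$ with a $y_1$-bound strand from bottom platform $j>i$; each such mixing pair crosses exactly once, and the total number of mixing crossings is $\sum_{i<j}(a_i-m_i)m_j=N(\vec m,\vec a)$, which is the required length identity. The main technical point is precisely this matching between the $q$-braiding exponent $N(\vec m,\vec a)$ coming from $\Delta$ and the count of mixing crossings in $\sigma(c)$: once the platform picture of minimal coset representatives is used systematically this is transparent, but one should carefully confirm that each mixing pair contributes exactly one crossing and that no crossings between same-region strands from different platforms are overlooked when splitting the diagram.
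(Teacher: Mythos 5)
Your proof is correct and takes essentially the same approach as the paper: reduce to products of $h_n$'s and match the $q$-exponents from the braided multiplication in $\Delta(h_\alpha)$ with the crossings between $y_1$-bound and $y_2$-bound strands in the platform diagrams. You merely make explicit the splitting bijection and the length identity $\ell(c)=\ell(c_1)+\ell(c_2)+\sum_{i<j}(a_i-m_i)m_j$ that the paper conveys via its picture.
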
  
\noindent In other words, multiplication and comultiplication are adjoint operators 
relative to these forms on $\sym'$ and $\sym'^{\otimes 2}$.  

\begin{proof} It is enough to check the adjointness when $x,y_1,y_2$ are products 
of $h_n$'s. The inner product $(y_1y_2,x)$ is computed as a sum over diagrams 
(of double cosets) with platforms at the bottom corresponding to the terms of $x$ 
and platforms at the top corresponding to those of $y_1$ followed by those of $y_2$. 
In a given diagram, lines from each platform of $x$ will split into those 
going into $y_1,$ respectively $y_2$, platforms. These two types of lines will 
intersect, and the intersection points will contribute powers of $q$, which 
are matched by the powers of $q$ in $\Delta(x)$ coming from the definition 
of multiplication in $\sym'^{\otimes 2}$; see the diagram below.  \end{proof}

\begin{equation*}\begin{split}
\qquad\text{intersection points between two types of lines}:\\
\hackcenter{\begin{tikzpicture}[scale=.7]
    \draw[thick] (1,0) [out=90, in=-90] to (2.5,3);
    \draw[thick] (2,0) [out=90, in=-90] to (3.5,3);
    \draw[thick] (3,0) [out=90, in=-90] to (5.5,3);
    \draw[thick] (4,0) [out=90, in=-90] to (1.5,3);
    \draw[thick] (5,0) [out=90, in=-90] to (4.5,3);
    \draw[thick] (7,0) [out=90, in=-90] to (6.5,3);
    \draw[thick] (8,0) [out=90, in=-90] to (10.5,3);
    \draw[thick] (9,0) [out=90, in=-90] to (12.5,3);
    \draw[thick] (10,0) [out=90, in=-90] to (7.5,3);
    \draw[thick] (11,0) [out=90, in=-90] to (11.5,1.5) [out=90, in=-90] to (11.5,3);
    \draw[thick] (12,0) [out=90, in=-90] to (8.5,3);
    \draw[thick] (13,0) [out=90, in=-90] to (9.5,3);
    \draw[thick] (14,0) [out=90, in=-90] to (13.5,3);
    \draw[thick, dashed] (0,3) -- (15,3);
    \draw[thick] (1.5,3) [out=90, in=-90] to (1.5,8);
    \draw[thick] (2.5,3) [out=90, in=-90] to (6.5,8);
    \draw[thick] (3.5,3) [out=90, in=-90] to (7.5,8);
    \draw[thick] (4.5,3) [out=90, in=-90] to (8.5,8);
    \draw[thick] (5.5,3) [out=90, in=-90] to (11.5,8);
    \draw[thick] (6.5,3) [out=90, in=-90] to (2.5,8);
    \draw[thick] (7.5,3) [out=90, in=-90] to (3.5,8);
    \draw[thick] (8.5,3) [out=90, in=-90] to (4.5,8);
    \draw[thick] (9.5,3) [out=90, in=-90] to (5.5,8);
    \draw[thick] (10.5,3) [out=90, in=-90] to (9.5,8);
    \draw[thick] (11.5,3) [out=90, in=-90] to (10.5,8);
    \draw[thick] (12.5,3) [out=90, in=-90] to (12.5,8);
    \draw[thick] (13.5,3) [out=90, in=-90] to (13.5,8);
    \filldraw[draw=black, fill=white] (.75,-.25) rectangle (3.25,.25);
    \filldraw[draw=black, fill=white] (3.75,-.25) rectangle (5.25,.25);
    \filldraw[draw=black, fill=white] (6.75,-.25) rectangle (9.25,.25);
    \filldraw[draw=black, fill=white] (9.75,-.25) rectangle (11.25,.25);
    \filldraw[draw=black, fill=white] (11.75,-.25) rectangle (14.25,.25);
    \filldraw[draw=black, fill=white] (1.25,7.75) rectangle (5.75,8.25);
    \filldraw[draw=black, fill=white] (6.25,7.75) rectangle (10.75,8.25);
    \filldraw[draw=black, fill=white] (11.25,7.75) rectangle (13.75,8.25);
    \node at (2.5,-1) {$y_1$};
    \node at (10.5,-1) {$y_2$};
    \node at (7.25,9) {$x$};
\end{tikzpicture}}
\end{split}\end{equation*}

Let $\mc{I}\subset \sym'$ be the radical of $(\cdot,\cdot)$.  Then  
$\mc{I}= \oplusop{n\ge 0}
\mc{I}_n$, where $\mc{I}_n = \mc{I}\cap \sym'_n$. Define 
$\sym= \sym'/\mc{I}$ and let $\sym_n$ denote the subspace of elements of 
$\sym$ which are homogeneous of degree $n$. 

To emphasize the dependence on $q$ one can also write $\sym^q$ instead 
of $\sym$ and $\sym^q_n$ instead of $\sym_n$. We will use the 
shorter notation whenever possible.

\begin{prop} $\mc{I}$ is a $q$-bialgebra ideal in $\sym'$: 
$$ \mc{I}\sym' = \sym'\mc{I} = \mc{I}, \quad \Delta(\mc{I}) \subset 
\mc{I}\otimes  \sym' + \sym' \otimes \mc{I}.$$ 
\end{prop}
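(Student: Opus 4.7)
The plan is to deduce both properties from the adjointness just established, $(y_1 y_2, x) = (y_1 \otimes y_2, \Delta(x))$, combined with the product formula $(y_1 \otimes y_2, x_1 \otimes x_2) = (y_1, x_1)(y_2, x_2)$ for the bilinear form on $\sym'^{\otimes 2}$. The characterization of $\mc{I}$ as the radical of the form is the only external input needed.

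I will first handle the ideal property. The nontrivial content is the inclusion $\mc{I}\sym', \sym'\mc{I} \subset \mc{I}$, which I would verify by pairing: given $x \in \mc{I}$, $y \in \sym'$, and arbitrary $z \in \sym'$, write $\Delta(z) = \sum_i z'_i \otimes z''_i$ and apply adjointness,
$$ (xy, z) \,=\, (x \otimes y, \Delta(z)) \,=\, \sum_i (x, z'_i)(y, z''_i) \,=\, 0, $$
since $(x, \cdot) \equiv 0$ on $\mc{I}$. This forces $xy \in \mc{I}$. The argument for $yx$ is symmetric, with $x$ and $y$ interchanged in the tensor factor.

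For the coideal property, fix $x \in \mc{I}$. Adjointness yields
$$ (y_1 \otimes y_2, \Delta(x)) \,=\, (y_1 y_2, x) \,=\, 0 $$
for all $y_1, y_2 \in \sym'$, so $\Delta(x)$ lies in the radical $\mc{R}$ of the tensor-product form on $\sym'^{\otimes 2}$. To finish, I would identify $\mc{R}$ with $\mc{I}\otimes\sym' + \sym'\otimes\mc{I}$. One containment is immediate from the product formula. For the other, the plan is to pick a splitting $\sym'_n = \mc{I}_n \oplus C_n$ in each degree, observe that the restriction of the form to $C := \bigoplus_n C_n \cong \sym$ is nondegenerate, conclude that the tensor form is nondegenerate on $\bigoplus_{n,m} C_n \otimes C_m$, and read off that the radical is exactly the complementary summand $\mc{I}\otimes\sym' + \sym'\otimes\mc{I}$.

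The main obstacle I anticipate is precisely this identification of $\mc{R}$. The computation itself is linear algebra, but over a general commutative ring $\Bbbk$ the splitting $\sym'_n = \mc{I}_n \oplus C_n$ need not exist, and nondegeneracy of a tensor product of forms requires care. The cleanest fix is to work over a field (or pass to the field of fractions of $\Bbbk$ and descend), which is harmless in the cases of interest for the paper ($q = \pm 1$). Once the radical is identified, no further structural input is needed; everything else is formal consequence of the adjointness proposition and the factored form on $\sym'^{\otimes 2}$.
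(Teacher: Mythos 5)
Your argument is correct and is essentially the same as the paper's, which disposes of the proposition in one sentence by saying both properties ``follow at once from adjointness of multiplication and comultiplication'': your pairing computation for the ideal property and your identification of the radical of the tensor-product form on $\sym'^{\otimes 2}$ with $\mc{I}\otimes\sym'+\sym'\otimes\mc{I}$ (via degreewise splittings and nondegeneracy on complements) is precisely the implicit content of that sentence. Your caveat that this identification needs a field (or a fraction-field-and-descent argument) is a genuine subtlety over a general commutative ground ring $\Bbbk$ that the paper glosses over, not a defect of your proof relative to the paper's.
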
 

\begin{proof} These properties of $\mc{I}$ follow at once
from adjointness of multiplication and comultiplication. 
\end{proof}

\begin{cor} $\sym$ inherits a $q$-bialgebra structure from that of $\sym'$.
\end{cor}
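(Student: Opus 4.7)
The plan is to read off the corollary as a formal consequence of the two ideal conditions established in the preceding proposition, together with a quick check that the unit and counit also pass to the quotient.

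First I would set up the quotient map $\pi\colon \sym' \to \sym = \sym'/\mc{I}$ and recall what a $q$-bialgebra structure on $\sym$ requires: a multiplication $\bar m\colon \sym\otimes\sym\to\sym$, a unit $\bar\eta\colon\Bbbk\to\sym$, a comultiplication $\bar\Delta\colon\sym\to\sym\otimes\sym$ and a counit $\bar\epsilon\colon\sym\to\Bbbk$ in the braided category $\Bbbk\mathrm{-gmod}_q$, satisfying the usual axioms. Since $\mc{I}$ is a graded subspace of $\sym'$, the quotient $\sym$ and its tensor powers inherit $\Z$-gradings, and the braiding on tensor products is the same $q^{\deg\cdot\deg}$ swap used for $\sym'$.

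Next, the condition $\mc{I}\sym' = \sym'\mc{I}=\mc{I}$ makes $\mc{I}$ a two-sided ideal for the associative multiplication, so $m\colon\sym'\otimes\sym'\to\sym'$ composed with $\pi$ vanishes on $\mc{I}\otimes\sym'+\sym'\otimes\mc{I}$, and hence descends to $\bar m\colon \sym\otimes\sym\to\sym$. For the coproduct, consider the composite $(\pi\otimes\pi)\circ\Delta\colon \sym'\to\sym\otimes\sym$. By the coideal condition $\Delta(\mc{I})\subset \mc{I}\otimes\sym' + \sym'\otimes\mc{I}$, this composite vanishes on $\mc{I}$ and therefore factors uniquely through $\pi$ to give $\bar\Delta\colon \sym\to\sym\otimes\sym$. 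These descended maps are morphisms in $\Bbbk\mathrm{-gmod}_q$ because they are induced from graded maps and the braiding is the one induced from $\sym'$.

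For the unit and counit, note that the restriction of the bilinear form to $\sym'_0=\Bbbk\cdot 1$ satisfies $(1,1)=q^{\ell(e)}=1$, so $\mc{I}_0=0$ and the unit $\eta\colon \Bbbk\to\sym'$ descends injectively as $\bar\eta$. The counit $\epsilon$ vanishes by definition on $\sym'_{>0}$ and therefore on the graded subspace $\mc{I}\subset \sym'_{>0}$, so it factors through $\pi$ to a counit $\bar\epsilon\colon\sym\to\Bbbk$. Finally, associativity, coassociativity, the counit axioms, and compatibility of $\Delta$ with $m$ (all of which are equalities of morphisms in $\Bbbk\mathrm{-gmod}_q$ between tensor powers of $\sym'$) are preserved by the surjection $\pi$ onto tensor powers of $\sym$, since the diagrams expressing these axioms for $\sym$ are the images under $\pi^{\otimes k}$ of the corresponding diagrams for $\sym'$. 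The only conceptual point worth flagging is the compatibility between $\bar\Delta$ and $\bar m$, which in the $q$-setting requires the braiding to appear in the right place; this causes no trouble because the braiding descends from the one on $\sym'^{\otimes 2}$ used to define the $q$-bialgebra structure there, and $\pi\otimes\pi$ is a morphism of braided objects.

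I do not expect any real obstacle here: the substance is entirely contained in the two ideal inclusions of the preceding proposition, and the corollary is a formal consequence. The only thing to be mildly careful about is to verify $\mc{I}_0=0$ so that the unit survives, and to phrase the descent of $\Delta$ correctly as a factorization of $(\pi\otimes\pi)\circ\Delta$ through $\pi$ rather than as an equality in $\sym'\otimes\sym'$.
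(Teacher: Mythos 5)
Your argument is correct and is exactly the standard descent-to-the-quotient reasoning the paper intends: the corollary is stated there without proof as an immediate formal consequence of the preceding proposition that $\mc{I}$ is a $q$-bialgebra ideal. Your extra checks (that $\mc{I}_0=0$ so the unit survives, and that $\epsilon$ kills $\mc{I}$) are fine and consistent with the paper's setup.
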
 

If $q=0$, the bilinear form degenerates and $\dim(\sym_n)=1$ for all $n\geq0$.

\vspace{0.07in}

If $q=1$, the inner product $(h_{\beta}, h_{\alpha}) = |{}_{\beta} S_{\alpha}|  $ 
is the number of double cosets, and it coincides with the standard inner product 
on the bialgebra of symmetric functions $\Bbbk[h_1, h_2, \dots ]$ in infinitely many 
variables $x_1, x_2, \dots $, with $h_n$ being the $n$-th complete symmetric 
function. In this case the ideal $\mc{I}$ is generated by commutators 
$[h_n,h_m]= h_n h_m - h_m h_n$ over all $n,m$, and the bialgebra $\sym^1$ is 
the maximal commutative quotient of $\sym'$. Note that we defined $\sym'$ 
as a free associative (not commutative) algebra. The bilinear form in the $q=1$ case forces 
commutativity but nothing else. Nondegeneracy of the form on the maximal commutative
quotient follows from the result that the elements $h_{\lambda}=h_{\lambda_1} 
\dots h_{\lambda_r}$ are linearly independent over all partitions $\lambda$ of $n$. 
This is proved by introducing elementary symmetric functions $e_n$ via the inductive 
relation 
 $$ \sum_{k=0}^n (-1)^k h_k e_{n-k} = 0,$$ 
defining $e_{\lambda} = e_{\lambda_1} \dots e_{\lambda_r}$, 
and then checking that the matrix of the bilinear form is upper-triangular 
with ones on the diagonal with respect to the bases $\{h_{\lambda}\}_{\lambda\vdash n}$ 
and $\{e_{\lambda^T}\}_{\lambda\vdash n}$ for any total order on 
partitions refining the dominance order, where 
$\lambda^T$ is the dual (or transpose) partition of $\lambda$.

%
\subsection{Odd complete and elementary symmetric functions}\label{subsec-e-h}
%

From now on, unless stated otherwise, we take $q=-1$.  In this case we call 
$\sym=\sym^{-1}$ the bialgebra of \textit{odd symmetric functions}.  Choosing 
$q=-1$ makes $\Bbbk\text{-gmod}_q$ the category of $\Z$-graded 
super-vector spaces, so that an algebra in $\Bbbk\text{-gmod}_{-1}$ is a 
$\Z$-graded superalgebra, and likewise for $(-1)$-bialgebras and 
$(-1)$-Hopf algebras.  The super-grading is the mod 2 reduction of the $\Z$-grading.

\vspace{0.07in}

When $q=-1$, equation \eqref{eqn-bilinform} takes the form
\begin{equation}\label{eqn-odd-bilform}
(h_{\beta}, h_{\alpha}) = \begin{cases}\sum_{c\in {}_{\beta} S_{\alpha}}
(-1)^{\ell(c)}&\text{if }|\alpha|=|\beta|,\\ 
0&\text{otherwise}.\end{cases}
\end{equation}
\noindent If $\lambda=(\lambda_1,\ldots,\lambda_r)$ is a partition, the product
\begin{equation*}
h_\lambda=h_{\lambda_1}h_{\lambda_2}\cdots h_{\lambda_r}
\end{equation*}
is called an \textit{odd complete symmetric function}.  By analogy with the even ($q=1$) case, inductively define elements $e_n\in\sym$ by $e_0=1$ and
\begin{equation}\label{eqn-defn-e}
\sum_{k=0}^n(-1)^{\binom{k+1}{2}}e_kh_{n-k}=0.
\end{equation}
Equation \eqref{eqn-defn-e} is equivalent to the equation
\begin{equation*}
\sum_{k=0}^n(-1)^{\binom{k+1}{2}}h_{n-k}e_k=0.
\end{equation*}
This can be checked by a straightforward calculation or by applying the 
involution $\psi_1\psi_2$ of Subsection \ref{subsec-automorphisms}.  
The \textit{odd elementary symmetric functions} are defined to be products
\begin{equation*}
e_\lambda=e_{\lambda_1}e_{\lambda_2}\cdots e_{\lambda_r}
\end{equation*}
for partitions $\lambda=(\lambda_1,\ldots,\lambda_r)$.

Define $h_\alpha$ and $e_\alpha$ for a composition $\alpha=(a_1,\ldots,a_r)$ 
similarly, as
\begin{equation*}
h_\alpha=h_{a_1}\cdots h_{a_r}, \ \ \ \ 
e_\alpha=e_{a_1}\cdots e_{a_r}.
\end{equation*}
We will write 
$\binom{alpha}{2}=\binom{a_1}{2}+\ldots+\binom{a_r}{2}$ and call $\ell(\alpha)=r$ 
the length of the composition $\alpha$.  
An easy inductive argument shows that another equivalent definition of $e_n$ is 
\begin{equation}\label{eqn-defn-e-alt}
e_n=(-1)^{\binom{n+1}{2}}\sum_{|\alpha|=n}(-1)^{\ell(\alpha)}h_\alpha.
\end{equation}
The sum is over all $2^{n-1}$ compositions $\alpha$ of $n$.
Observe that
\begin{equation*}
(-1)^{\binom{\lambda}{2}+|\lambda|}(-1)^{\lambda_2^T+\lambda_4^T+\lambda_6^T+\ldots}=(-1)^{|\lambda|}
\end{equation*}
for any partition $\lambda$.  Here, $\lambda_i^T$ means the $i$-th row length of $\lambda^T$ (equivalently, the $i$-th column height of $\lambda$).  This will be useful later in studying odd Schur functions.

Equation \eqref{eqn-defn-e} can be solved for $e_n$ in terms of $h_1,h_2,\ldots,h_n$, so \eqref{eqn-defn-e} makes sense as a definition of $e_n$.  The first few $e_n$ are
\begin{equation*}\begin{split}
e_1&=h_1, \\
e_2&=h_2-h_1^2, \\
e_3&=h_3-h_1^3, \\
e_4&=-h_4+h_2^2-h_2h_1^2+h_1^4, \\
e_5&=h_5-2h_4h_1-h_3h_1^2+h_2^2h_1+h_1^5.
\end{split}\end{equation*}
Since equation \eqref{eqn-defn-e} also allows one to solve for $h_n$ in terms of $e_1,e_2,\ldots,e_n$, any element of $\sym$ is a linear combination of words in the $e_k$. It is 
convenient to set $e_k=0$ for $k<0$. 
\begin{prop}\label{prop-h} We have the following:
\begin{enumerate}
\item The comultiplication on $e_n$ is 
\begin{equation}\label{eqn-h-coproduct}
\Delta(e_n) = \sum_{k=0}^ne_k\otimes e_{n-k}.
\end{equation}
\item If $\alpha$ is a composition of $n$, then
\begin{equation}\label{eqn-h-e-bil-form}
(h_{\alpha},e_n)=
\begin{cases}1&\text{if }\alpha=(1,1,\ldots,1),\\0&\text{otherwise.}\end{cases}
\end{equation}\end{enumerate}\end{prop}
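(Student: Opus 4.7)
The plan is to establish (1) by induction on $n$ and then derive (2) from (1) using the adjointness $(y_1y_2,x)=(y_1\otimes y_2,\Delta(x))$ just proved.

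For (1), the base $n=0$ reads $\Delta(1)=1\otimes 1=e_0\otimes e_0$. For the inductive step, set $S_n:=\sum_{k=0}^n e_k\otimes e_{n-k}$. Since $\Delta$ is a $q$-algebra homomorphism, applying it to the defining recursion $\sum_{k=0}^n(-1)^{\langle k\rangle}e_k h_{n-k}=0$ gives $\sum_{k=0}^n(-1)^{\langle k\rangle}\Delta(e_k)\Delta(h_{n-k})=0$. I would then compute the analogous quantity with $\Delta(e_k)$ replaced by $S_k$,
\begin{equation*}
T_n:=\sum_{k=0}^n(-1)^{\langle k\rangle}S_k\cdot\Delta(h_{n-k})\in\sym'^{\otimes 2},
\end{equation*}
by expanding with the braided product (which introduces a factor $(-1)^{(k-j)l}$) and reindexing by the degrees $(a,b,c,d)$ of the four $e/h$ factors with $a+b+c+d=n$, $k=a+c$. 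Using the identity $\langle a+c\rangle=\langle a\rangle+\langle c\rangle+ac$ noted in the text together with $ac+cb=c(a+b)$, the overall sign $(-1)^{\langle a+c\rangle+cb}$ factors as $(-1)^{\langle a\rangle}\cdot(-1)^{\langle c\rangle+cm}$ where $m:=a+b$. Regrouping yields
\begin{equation*}
T_n=\sum_{m=0}^n\Bigl(\sum_{a+b=m}(-1)^{\langle a\rangle}e_ah_b\Bigr)\otimes\Bigl(\sum_{c+d=n-m}(-1)^{\langle c\rangle+cm}e_ch_d\Bigr).
\end{equation*}
The first tensor factor vanishes for $m\ge 1$ by the defining recursion (\ref{eqn-defn-e}), leaving only $m=0$, whose contribution is $1\otimes 0=0$ for $n\ge 1$. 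Subtracting $T_n=0$ from the image of the defining recursion under $\Delta$ and invoking the inductive hypothesis $\Delta(e_k)=S_k$ for $k<n$ collapses everything to $(-1)^{\langle n\rangle}(\Delta(e_n)-S_n)=0$, proving (1).

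For (2), iterated adjointness (via coassociativity) gives $(h_\alpha,e_n)=(h_{a_1}\otimes\cdots\otimes h_{a_r},\Delta^{(r-1)}(e_n))$, and iterating (1) yields $\Delta^{(r-1)}(e_n)=\sum_{b_1+\cdots+b_r=n}e_{b_1}\otimes\cdots\otimes e_{b_r}$. Orthogonality of the weight spaces forces $b_i=a_i$, and thus $(h_\alpha,e_n)=\prod_{i=1}^r(h_{a_i},e_{a_i})$. It remains to compute $(h_m,e_m)$. The double coset space $S_m\backslash S_m/S_\beta$ is a singleton with the identity as minimal representative, so $(h_m,h_\beta)=1$ for every decomposition $\beta$ of $m$, and (\ref{eqn-defn-e-alt}) gives
\begin{equation*}
(h_m,e_m)=(-1)^{\langle m\rangle}\sum_{|\beta|=m}(-1)^{\ell(\beta)}=(-1)^{\langle m\rangle}\sum_{k=1}^m(-1)^k\binom{m-1}{k-1},
\end{equation*}
which equals $1$ when $m\le 1$ and $0$ when $m\ge 2$. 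Hence $(h_\alpha,e_n)$ is nonzero exactly when every $a_i=1$, giving (2).

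The main obstacle is the sign and index bookkeeping in the inductive step of (1). Getting the reindexing by $(a,b,c,d)$ right, and recognizing that the combined sign factorizes cleanly into an $(a,b)$-piece and a $(c,d,m)$-piece, is what lets the defining recursion collapse the sum. Once that factorization is in place, everything else—iterated adjointness, weight-space orthogonality, the single-double-coset computation for $(h_m,h_\beta)$, and the binomial identity $\sum_k(-1)^k\binom{m-1}{k-1}=0$ for $m\ge 2$—is routine.
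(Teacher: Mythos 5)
Your proof is correct, but it takes a genuinely different route from the paper's, essentially running the logic in the opposite direction. The paper proves part (2) first, by simultaneous induction on both statements: using the black/white platform diagrammatics it establishes the auxiliary pairing identity \eqref{eqn-prop-h-calc} for $(h_m x, e_k h_{n-k})$ with $k<n$ (the inductive hypothesis being statement (2) in lower degrees, which is what limits the $e_k$--$h_m$ platforms to at most one connecting strand), sums against the defining recursion \eqref{eqn-defn-e} so that all terms but one cancel in pairs, and only then deduces (1) from (2) via adjointness. You instead prove (1) directly by applying $\Delta$ to \eqref{eqn-defn-e} and checking, via the factorization $(-1)^{\ang{a+c}+cb}=(-1)^{\ang{a}}(-1)^{\ang{c}+cm}$, that the recursion annihilates every block of the regrouped sum; this computation is sound, is purely algebraic (no diagrammatics and no appeal to nondegeneracy), and in fact establishes the coproduct formula already in $\sym'$ before passing to the quotient (if you phrase it there, just note that \eqref{eqn-defn-e} or \eqref{eqn-defn-e-alt} defines a lift of $e_n$ to $\sym'$; otherwise the identical argument runs in $\sym^{\otimes 2}$). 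Your derivation of (2) from (1) -- iterated adjointness via coassociativity, weight-space orthogonality forcing $b_i=a_i$, the single double coset giving $(h_m,h_\beta)=1$, and the binomial evaluation of $(h_m,e_m)$ from \eqref{eqn-defn-e-alt} -- is likewise correct. What the paper's route buys is the reusable formula \eqref{eqn-prop-h-calc} and a first workout of the platform calculus, both exploited again in the proofs of Propositions \ref{prop-e-relations} and \ref{prop-e-h-relations}; what your route buys is a shorter, self-contained derivation of the coproduct and a closed-form computation of $(h_\alpha,e_n)$ that never leaves the algebraic framework.
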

\noindent Since $(\cdot,\cdot)$ is nondegenerate and $\sym$ is finite dimensional in each degree, property \eqref{eqn-h-e-bil-form} uniquely characterizes the elements $e_n\in\sym$.

\begin{proof} We prove both statements by simultaneous induction on $n$, the 
cases $n=0,1$ being clear.  To prove the second statement, it suffices to prove
\begin{equation*}
(h_mx,e_n)=\begin{cases}(x,e_{n-1})&\text{if }m=1,\\0&\text{otherwise}
\end{cases}
\end{equation*}
for any $x\in\sym$.  First, a calculation: for $k<n$, the inductive hypothesis implies
\begin{equation}\label{eqn-prop-h-calc}
(h_mx,e_kh_{n-k})=(-1)^{km}(x,e_kh_{n-k-m})+(-1)^{(k-1)(m-1)}
(x,e_{k-1}h_{n-k-m+1}).
\end{equation}
To derive this equation, it is useful to first extend the bilinear form diagrammatics of Subsection \ref{subsec-q-vector-spaces}: we  represent $h_n$'s by white platforms of size $n$ 
and $e_n$'s by black platforms of size $n$.  Now we start proving \eqref{eqn-prop-h-calc} by drawing $e_k h_{n-k}$ below $h_m x$. 

\begin{equation*}
\hackcenter{\begin{tikzpicture}[scale=.5,decoration=snake]
    \filldraw[draw=black, fill=black] (.75,-.25) rectangle (4.25,.25);
    \filldraw[draw=black, fill=white] (4.75,-.25) rectangle (9.25,.25);
    \filldraw[draw=black, fill=white] (.75,2.75) rectangle (3.25,3.25);
    \draw[decorate] (4,3) -- (9,3);
    \node at (2.5,-1) {$k$};
    \node at (7,-1) {$n-k$};
    \node at (2,4) {$m$};
    \node at (6.5,4) {$x$};
\end{tikzpicture}}
\end{equation*}

Strictly speaking, a diagram containing a black platform representing $e_k$ stands in for a linear combination of diagrams in which the black platform is replaced by groups of white platforms which come from writing $e_k$ as a linear combination of $h_\alpha$'s.

By the inductive hypothesis applied to $k<n$, 
at most one line can connect the bottom left black platform of width 
$k$ (representing $e_k$) with the top left white platform of width 
$m$ (representing $h_m$). If no lines connect these two platforms, 
all lines from $h_m$ will be connected to $h_{n-k}$ (necessarily requiring 
$n-k\ge m$), while all lines from $e_k$ will go into $x$, creating 
$km$ intersection points that contribute $(-1)^{km}$; see below. 
The contribution from these diagrams will total
$(-1)^{km}(x,e_kh_{n-k-m})$. The dotted curve in the figure below encloses the 
area producing the factor $(x,e_k h_{n-k-m})$. 

\begin{equation*}
\hackcenter{\begin{tikzpicture}[scale=.5,decoration=snake]
    \draw[thick] (1,0) -- (5,4.5);
    \draw[thick] (2,0) -- (6,4.5);
    \draw[thick] (3,0) -- (7,4.5);
    \draw[thick] (4,0) -- (8,4.5);
    \draw[thick] (6,0) -- (1,5);
    \draw[thick] (7,0) -- (2,5);
    \draw[thick] (8,0) -- (3,5);
    \draw[thick] (9,0) -- (9,4.5);
    \draw[thick] (10,0) -- (10,4.5);
    \filldraw[draw=black, fill=black] (.75,-.25) rectangle (4.25,.25);
    \filldraw[draw=black, fill=white] (5.75,-.25) rectangle (10.25,.25);
    \filldraw[draw=black, fill=white] (.75,4.75) rectangle (3.25,5.25);
    \draw[decorate] (5,5) -- (10,5);
    \draw[dotted] (7.5,5) ellipse (3 and 1);
    \node at (2.5,-1) {$k$};
    \node at (8,-1) {$n-k$};
    \node at (2,6) {$m$};
    \node at (7.5,6.5) {$x$};
\end{tikzpicture}}
\end{equation*}

If one line connects the $e_k$ with the $h_m$ platform, the remaining 
$k-1$ lines from the black platform go into $x$, while $m-1$ lines 
from $h_m$  enter $h_{n-k}$. These two types of lines intersect 
and contribute $(-1)^{(k-1)(m-1)}$ to the sum. In the diagram below 
we denote each of these bunches of ``parallel'' lines by a single line 
labelled $k-1$, respectively $m-1$. The dotted curve below encloses the area 
contributing the factor $(x,e_{k-1}h_{n-m-k+1})$. 

\begin{equation*}
\hackcenter{\begin{tikzpicture}[scale=.5,decoration=snake]
    \draw[thick] (1.5,0) -- (1.5,5);
    \draw[thick] (3,0) -- (6,4.5);
    \draw[thick] (7,0) -- (2.5,5);
    \draw[thick] (9,0) -- (9,4.5);
    \filldraw[draw=black, fill=black] (.75,-.25) rectangle (4.25,.25);
    \filldraw[draw=black, fill=white] (5.75,-.25) rectangle (10.25,.25);
    \filldraw[draw=black, fill=white] (.75,4.75) rectangle (3.25,5.25);
    \draw[decorate] (5,5) -- (10,5);
    \draw[dotted] (7.5,5) ellipse (3 and 1);
    \node at (2.5,-1) {$k$};
    \node at (8,-1) {$n-k$};
    \node at (2,6) {$m$};
    \node at (7.5,6.5) {$x$};
    \node at (3.5,2) {$\scriptstyle k-1$};
    \node at (6.5,2) {$\scriptstyle m-1$};
    \node at (10.75,2) {$\scriptstyle n-m-k+1$};
\end{tikzpicture}}
\end{equation*}

This computation proves (\ref{eqn-prop-h-calc}). Therefore
\begin{equation*}\begin{split}
-(-1)^{\binom{n+1}{2}}(h_mx, e_n)&\refequal{\eqref{eqn-defn-e}}
\sum_{k=0}^{n-1}(-1)^{\binom{k+1}{2}}(h_mx,e_kh_{n-k})\\
&\refequal{\eqref{eqn-prop-h-calc}}\sum_{k=0}^{n-1}(-1)^{\binom{k+1}{2}}
\left[(-1)^{km}(x,e_kh_{n-k-m})+(-1)^{(k-1)(m-1)}(x,e_{k-1}h_{n-k-m+1})
\right]\\
&=(-1)^{\binom{n}{2}+m(n-1)}(x,e_{n-1}h_{1-m}).
\end{split}\end{equation*}
The third equality follows because all terms but one cancel in pairs.  Since $h_i=0$ 
for $i<0$, the second statement follows.  The first statement follows from the 
second, since adjointness of multiplication and comultiplication implies (recall 
that the bilinear form is symmetric) 
\begin{equation*}
(\Delta(e_n),h_\lambda\otimes h_\mu)=(e_n,h_\lambda h_\mu)=\begin{cases}1
&\lambda=(1^k),\mu=(1^\ell),k+\ell=n,\\0&\text{otherwise.}\end{cases}
\end{equation*}\end{proof}

\vspace{0.07in}

The proposition implies that unlike the $h_n$'s, the $e_n$'s do not all 
have norm 1.  The elements $e_0=1$ and $e_1=h_1$ both have norm 1, and 
\begin{eqnarray*}
-(-1)^{\binom{n+1}{2}}(e_n,e_n)	=&\sum_{k=0}^{n-1}(-1)^{\binom{k+1}{2}}(e_n,e_kh_{n-k})		&\eqref{eqn-defn-e}		\\
						=&(-1)^{\binom{n}{2}}(e_n,e_{n-1}h_1)						&\eqref{eqn-h-e-bil-form}	\\
						=&(-1)^{\binom{n}{2}}(\Delta(e_n),e_{n-1}\otimes h_1)			&					\\
						=&(-1)^{\binom{n}{2}}(e_{n-1},e_{n-1})						&\eqref{eqn-h-coproduct},\eqref{eqn-h-e-bil-form}.
\end{eqnarray*}
Solving the resulting recurrence relation, we find
\begin{equation}\label{eqn-e-norm}
(e_n,e_n)=(-1)^{\binom{n}{2}}.
\end{equation}

The bilinear form can be evaluated on products of $h$'s and $e$'s with the 
help of diagrammatics as follows (these diagrammatics parallel the graphical calculus developed in 
\cite{Cvitanovic} in the even case).  Let $h_n^+=h_n$ and $h_n^-=e_n$.  
Let $\alpha=(a_1,\ldots, a_r), \beta=(b_1,\ldots,b_s)$ be compositions of $n$ 
and let $\epsilon,\eta$ be tuples of signs of lengths $r,s$ respectively.  
We want to compute
\begin{equation*}
(h_\beta^\eta,h_\alpha^\epsilon)=(h_{b_1}^{\eta_1}\cdots h_{b_s}^{\eta_s},
h_{a_1}^{\epsilon_1}\cdots h_{a_r}^{\epsilon_r}).
\end{equation*}
In a rectangular region, draw platforms of widths $a_1,\ldots,a_r$ along the 
bottom and of widths $b_1,\ldots,b_s$ along the top.  Color a platform white 
(respectively black) if its corresponding sign is $1$ (respectively $-1$).  
Then connect platforms by strands subject to the following rules:
\begin{itemize}
\item As described in the previous subsection, a platform of width $k$ has 
$k$ strands attached to it, and strands are generic curves (no height critical 
points, no triple intersections),
\item The depicted permutation is a minimal double coset representative 
for $S_\beta\backslash S_n/S_\alpha$,
\item For all black platforms $P$ and white platforms $P'$, there is at most 
one strand connecting $P$ to $P'$,
\item Such diagrams are considered up to identification in $S_n$.  That is, diagrams are considered modulo rel boundary homotopy through generic diagrams, and Reidemeister III moves.
\end{itemize}
Let $\text{diag}(\beta,\alpha,\eta,\epsilon)$ be the set of all such diagrams, up to 
the described equivalence.  To each diagram 
$D\in\text{diag}(\beta,\alpha,\eta,\epsilon)$ representing a permutation 
$\sigma\in S_n$, assign a sign $\pm1$ in the following way:
\begin{itemize}
\item Assign a sign $(-1)^{\ell(\sigma)}$, where $\ell(\sigma)$ is the Coxeter 
length of $\sigma$.  That is, $\ell(\sigma)$ equals the number of crossings in the 
minimal double coset representative diagram; equivalently, $\ell(\sigma)$ is the number 
of pairs $i<j$ such that $\sigma(i)>\sigma(j)$.
\item For each pair of black platforms, assign a sign $(-1)^{\binom{k}{2}}=(-1)^{\frac{1}{2}k(k-1)}$, where 
$k$ is the number of strands connecting these two platforms (by equation \ref{eqn-e-norm}).
\end{itemize}
For each diagram $D\in\text{diag}(\beta,\alpha,\eta,\epsilon)$, let $\sign(D)$ be 
this sign (the product of the two factors just described).  The results above imply the 
following.
\begin{prop} The product $(h_\beta^\eta,h_\alpha^\epsilon)$ is given by
\begin{equation*}
(h_\beta^\eta,h_\alpha^\epsilon)=
\sum_{D\in\text{diag}(\beta,\alpha,\eta,\epsilon)}\sign(D).
\end{equation*}\end{prop}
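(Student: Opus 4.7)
The plan is to prove the proposition by induction on the total number $b$ of black platforms in $\epsilon$ and $\eta$, expanding each black platform via equation \eqref{eqn-defn-e-alt} and invoking the all-white case \eqref{eqn-odd-bilform}. For the base case $b=0$, every platform is white, so $h_\beta^\eta=h_\beta$ and $h_\alpha^\epsilon=h_\alpha$; the rule ``at most one strand between a black and a white platform'' is vacuous and no black-black sign factor enters $\sign(D)$, so the formula specializes directly to \eqref{eqn-odd-bilform}.

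For the inductive step I pick a black platform, say a bottom one of width $k=a_i$ at position $i$ (a top black platform is handled by the same argument after transposing, using symmetry of the bilinear form). Substituting $e_k = (-1)^{\ang{k}}\sum_{|\gamma|=k}(-1)^{\ell(\gamma)} h_\gamma$ via \eqref{eqn-defn-e-alt} and expanding by linearity yields
\begin{equation*}
(h_\beta^\eta, h_\alpha^\epsilon) = (-1)^{\ang{k}}\sum_{|\gamma|=k}(-1)^{\ell(\gamma)}(h_\beta^\eta, h_{\alpha_\gamma}^{\epsilon_\gamma}),
\end{equation*}
where $(\alpha_\gamma,\epsilon_\gamma)$ replaces the entry $(a_i,-1)$ of $(\alpha,\epsilon)$ with $(\gamma_1,+1),\ldots,(\gamma_{\ell(\gamma)},+1)$. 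Each term on the right has $b-1$ black platforms, so by the inductive hypothesis it equals a signed sum over diagrams in $\text{diag}(\beta,\alpha_\gamma,\eta,\epsilon_\gamma)$.

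The heart of the argument is to convert this double sum (over $\gamma$ and refined diagrams $D'$) into $\sum_{D \in \text{diag}(\beta,\alpha,\eta,\epsilon)} \sign(D)$. I group each pair $(\gamma, D')$ by its ``contracted'' diagram $\bar D$, obtained by merging the $\ell(\gamma)$ sub-platforms at position $i$ back into a single platform of width $k$ recolored black, and then taking the minimal representative of the resulting coarsened target pattern. Two verifications are required. First, if $\bar D$ is legal, the preimages of $\bar D$ should contribute $\sign(\bar D)$ in total; the nontrivial point is that for each ``bundle'' of $m$ strands joining the contracted platform to another black platform, summing $(-1)^{\ang{k}+\ell(\gamma)}$ times the extra crossings created when the $m$ strands are distributed across several sub-platforms collapses to the single factor $(-1)^{\ang{m-1}}$ present in $\sign(\bar D)$, which is essentially the local identity producing $(e_n,e_n) = (-1)^{\ang{n-1}}$ in \eqref{eqn-e-norm}. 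Second, if $\bar D$ would connect the contracted black platform to some white platform $W$ by $m \geq 2$ strands, the preimages must cancel: in $\bar D$ those $m$ strands occupy consecutive positions $p,\ldots,p+m-1$ on the contracted platform, so toggling the cut at the leftmost internal gap of this bundle (adding it if absent, removing it if present) changes $\ell(\gamma)$ by $1$ while leaving the underlying permutation and all other contributions to $\sign(D')$ invariant, giving a sign-reversing involution on the preimages.

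The main obstacle will be the sign matching in the first verification: one must show that the bundle-local identities decouple cleanly, so that the extra crossings introduced by refining the composition factor multiplicatively over distinct black-black bundles and are independent of the crossings between strands going to different targets (which are preserved under contraction).
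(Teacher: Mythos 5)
Your overall strategy (expand each black platform via \eqref{eqn-defn-e-alt}, reduce to the all-white case \eqref{eqn-odd-bilform}, then regroup by contracted connectivity) is a legitimate route, and it differs from the paper's, which deduces the proposition from the already-established local facts: adjointness, the coproduct \eqref{eqn-h-coproduct}, the pairing \eqref{eqn-h-e-bil-form}, and the norm \eqref{eqn-e-norm}. But the proposal has genuine gaps at exactly the two places where the work lies. First, the ``decoupling'' you flag as the main obstacle is not routine bookkeeping and is left unproved: the fiber of your contraction map over a fixed connectivity matrix consists of all ordered decompositions of the contracted column into nonzero columns, and these include refinements in which strands from \emph{different} sub-platforms cross; a single sub-platform can feed several bundles at once, so the composition $\gamma$, the inter-sub-platform crossings, and the different bundles interact globally. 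Multiplicativity over bundles is thus precisely what has to be proved, by an argument of the type used for \eqref{eqn-prop-h-calc} and \eqref{eqn-e-norm}, and the proposal does not supply it.

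Second, the cancellation involution fails as stated. The $m$ strands of a black--white bundle occupy consecutive positions only in the contracted diagram $\bar{D}$; in a typical preimage they do not (already for $(h_2h_1,e_3)$, the refinement $\gamma=(2,1)$ with columns $(1,1),(1,0)$ sends positions $1$ and $3$ to the width-two white platform), so ``the leftmost internal gap of this bundle'' is not well defined on the fiber. Moreover, removing a cut merges two adjacent sub-platforms whose strands may cross each other; the merged diagram's minimal representative then has a strictly smaller crossing number, so the underlying permutation is \emph{not} invariant, and the total sign $(-1)^{\ell(\gamma)}(-1)^{\ell(D')}$ flips only when the number of inversions between the two merged columns is even. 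Finally, a merge can combine two sub-platforms that each send one strand to the same top black platform, producing a diagram violating the black--white one-strand rule and hence leaving the index set of the inductive hypothesis. A correct sign-reversing involution does exist (the identity is true), but defining it on connectivity matrices so that it is an involution, stays inside the fiber, and reverses the sign is the actual content of this step; as written, the proposal asserts rather than proves it.
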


\vspace{0.07in}

\begin{example} Consider the product $(e_2h_1h_2,h_2e_3)=1-1-1=-1$.  The 
three contributing diagrams and their signs are shown below. Each diagram has 
even number of crossings, and the nontrivial signs come
from having two black boxes connected by a pair of lines in the second and 
third diagrams. 

\begin{equation*}
\hackcenter{\begin{tikzpicture}[scale=.5]
    \draw[thick] (1,0) [out=90, in=-90] to (1,3);
    \draw[thick] (2,0) [out=90, in=-90] to (4,3);
    \draw[thick] (3,0) [out=90, in=-90] to (2,3);
    \draw[thick] (4,0) [out=90, in=-90] to (3,3);
    \draw[thick] (5,0) [out=90, in=-90] to (5,3);
    \filldraw[draw=black, fill=white] (.75,-.25) rectangle (2.25,.25);
    \filldraw[draw=black, fill=black] (2.75,-.25) rectangle (5.25,.25);
    \filldraw[draw=black, fill=black] (.75,2.75) rectangle (2.25,3.25);
    \filldraw[draw=black, fill=white] (2.75,2.75) rectangle (3.25,3.25);
    \filldraw[draw=black, fill=white] (3.75,2.75) rectangle (5.25,3.25);
    \node at (2.5,-1) {$1$};
\end{tikzpicture}}\qquad
\hackcenter{\begin{tikzpicture}[scale=.5]
    \draw[thick] (1,0) [out=90, in=-90] to (3,3);
    \draw[thick] (2,0) [out=90, in=-90] to (4,3);
    \draw[thick] (3,0) [out=90, in=-90] to (1,3);
    \draw[thick] (4,0) [out=90, in=-90] to (2,3);
    \draw[thick] (5,0) [out=90, in=-90] to (5,3);
    \filldraw[draw=black, fill=white] (.75,-.25) rectangle (2.25,.25);
    \filldraw[draw=black, fill=black] (2.75,-.25) rectangle (5.25,.25);
    \filldraw[draw=black, fill=black] (.75,2.75) rectangle (2.25,3.25);
    \filldraw[draw=black, fill=white] (2.75,2.75) rectangle (3.25,3.25);
    \filldraw[draw=black, fill=white] (3.75,2.75) rectangle (5.25,3.25);
    \node at (2.5,-1) {$-1$};
\end{tikzpicture}}\qquad
\hackcenter{\begin{tikzpicture}[scale=.5]
    \draw[thick] (1,0) [out=90, in=-90] to (4,3);
    \draw[thick] (2,0) [out=90, in=-90] to (5,3);
    \draw[thick] (3,0) [out=90, in=-90] to (1,3);
    \draw[thick] (4,0) [out=90, in=-90] to (2,3);
    \draw[thick] (5,0) [out=90, in=-90] to (3,3);
    \filldraw[draw=black, fill=white] (.75,-.25) rectangle (2.25,.25);
    \filldraw[draw=black, fill=black] (2.75,-.25) rectangle (5.25,.25);
    \filldraw[draw=black, fill=black] (.75,2.75) rectangle (2.25,3.25);
    \filldraw[draw=black, fill=white] (2.75,2.75) rectangle (3.25,3.25);
    \filldraw[draw=black, fill=white] (3.75,2.75) rectangle (5.25,3.25);
    \node at (2.5,-1) {$-1$};
\end{tikzpicture}}
\end{equation*}

\end{example}

\begin{example} Consider the product $(e_2h_2,e_2h_2)=-2$.  
The two contributing diagrams and their signs are given below. 

\begin{equation*}
\hackcenter{\begin{tikzpicture}[scale=.5]
    \draw[thick] (1,0) [out=90, in=-90] to (1,3);
    \draw[thick] (2,0) [out=90, in=-90] to (2,3);
    \draw[thick] (3,0) [out=90, in=-90] to (3,3);
    \draw[thick] (4,0) [out=90, in=-90] to (4,3);
    \filldraw[draw=black, fill=black] (.75,-.25) rectangle (2.25,.25);
    \filldraw[draw=black, fill=white] (2.75,-.25) rectangle (4.25,.25);
    \filldraw[draw=black, fill=black] (.75,2.75) rectangle (2.25,3.25);
    \filldraw[draw=black, fill=white] (2.75,2.75) rectangle (4.25,3.25);
    \node at (2.5,-1) {$-1$};
\end{tikzpicture}}\qquad
\hackcenter{\begin{tikzpicture}[scale=.5]
    \draw[thick] (1,0) [out=90, in=-90] to (1,3);
    \draw[thick] (2,0) [out=90, in=-90] to (3,3);
    \draw[thick] (3,0) [out=90, in=-90] to (2,3);
    \draw[thick] (4,0) [out=90, in=-90] to (4,3);
    \filldraw[draw=black, fill=black] (.75,-.25) rectangle (2.25,.25);
    \filldraw[draw=black, fill=white] (2.75,-.25) rectangle (4.25,.25);
    \filldraw[draw=black, fill=black] (.75,2.75) rectangle (2.25,3.25);
    \filldraw[draw=black, fill=white] (2.75,2.75) rectangle (4.25,3.25);
    \node at (2.5,-1) {$-1$};
\end{tikzpicture}}
\end{equation*}

\end{example}

\vspace{0.07in}

An equivalent description of the sign is as follows: for each pair of black platforms 
connected by $k$ strands, introduce the longest element of $S_k$ among those $k$ 
strands.  The sign of the diagram is then obtained just by counting crossings.  So if the 
diagram has all white platforms, a minimal coset representative is still used.  If it has 
all black platforms, a maximal coset representative diagram is used instead.  If 
platforms of both colors are used, the double coset representative chosen is neither 
minimal nor maximal in general, but is chosen as above. 

White and black boxes of size one represent the same element 
$h_1=e_1$ of $\Lambda$. 

\vspace{0.07in}

\begin{prop}\label{prop-e-relations} When $a+b$ is even,
\begin{equation}\label{eqn-e-relation-1}
h_ah_b=h_bh_a.
\end{equation}
When $a+b$ is odd,
\begin{equation}\label{eqn-e-relation-2}
h_ah_b+(-1)^ah_bh_a=(-1)^ah_{a+1}h_{b-1}+h_{b-1}h_{a+1}.
\end{equation}
\end{prop}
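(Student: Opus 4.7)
The plan is to use non-degeneracy of $(\cdot, \cdot)$ on $\sym$ to reduce each identity to pairing with $h_\gamma$ for every composition $\gamma$ of $n = a + b$: since the $h_\gamma$ span $\sym'_n$, it suffices to show that the difference of the two sides has zero pairing with every such $h_\gamma$.

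Using the diagrammatic formula from the preceding proposition, one has
\begin{equation*}
(h_a h_b, h_\gamma) = \sum_{c} (-1)^{\sum_{i<j}(\gamma_i - c_i) c_j},
\end{equation*}
where $c = (c_1, \ldots, c_r)$ ranges over tuples with $0 \le c_i \le \gamma_i$ and $|c| = a$, and similarly for the other terms appearing in the proposition. The identity is thereby reduced to a combinatorial statement about signed double coset diagrams.

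The argument proceeds by strong induction on $n$, with both (even and odd) cases of the proposition proved simultaneously; the base cases $n \le 1$ are immediate. For the inductive step, split off the first bottom platform $\gamma_1$ to obtain a recursion
\begin{equation*}
(h_a h_b, h_\gamma) = \sum_{c_1 = 0}^{\min(a, \gamma_1)} (-1)^{(\gamma_1 - c_1)(a - c_1)} (h_{a - c_1} h_{b - \gamma_1 + c_1}, h_{\gamma'}),
\end{equation*}
with $\gamma' = (\gamma_2, \ldots, \gamma_r)$ of degree $n - \gamma_1 < n$, and analogous recursions for the other three terms of the odd-case identity. When $\gamma_1$ is even, the sub-degree has the same parity as $n$, and a direct mod $2$ calculation (exploiting $n$ even in the even-case hypothesis, $n$ odd in the odd-case hypothesis) shows the exponents $(\gamma_1 - c_1)(a - c_1)$ and $c_1 (b - \gamma_1 + c_1)$ match, so the relevant inductive hypothesis applies term-by-term. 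When $\gamma_1$ is odd the sub-degree has opposite parity, and the other case of the inductive hypothesis supplies a four-term relation among the sub-pairings which, combined with the recursion signs, collapses to the required identity at degree $n$.

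The main obstacle is the sign bookkeeping in the $\gamma_1$-odd subcase: here the four sub-pairings appearing in the odd-case inductive hypothesis at sub-degree $n - \gamma_1$ must be matched against the four recursions for $h_a h_b$, $h_b h_a$, $h_{a+1} h_{b-1}$, and $h_{b-1} h_{a+1}$, and verifying that the resulting sum telescopes requires careful tracking of the exponents $(\gamma_1 - c_1)(a - c_1)$ and their partners modulo $2$. The simultaneous treatment of the two cases is essential, since the parity flip produced by splitting off an odd $\gamma_1$ interlocks them.
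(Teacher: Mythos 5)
Your proposal is correct, but it takes a genuinely different route from the paper. Both arguments exploit that $\sym$ is the quotient of $\sym'$ by the radical of the form, so it suffices to pair the putative relation against a spanning set of the relevant degree, and both run a simultaneous induction in which the even and odd relations feed each other. The paper, however, pairs against elements of the form $e_kx$ rather than against the $h_\gamma$: by Proposition \ref{prop-h} a black platform meets each white platform in at most one strand, so $(h_ah_b,e_kx)=(h_bh_a,e_kx)=0$ for $k>2$, and only $k=1,2$ need be checked; the resulting graphical recursions have at most two terms each, and every case reduces in one step to a single instance of the lower-degree relation (odd relation at degree $a+b-1$ via $e_1$, even relation at degree $a+b-2$ via $e_2$, and the analogous reductions in the odd case), with no summation over splittings. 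Your choice of test elements $h_\gamma$ is more self-contained — it needs only the double-coset sign formula, not the elementary functions or their orthogonality property — but the price is the sum over all $c_1$ splittings of the first platform and the attendant telescoping. For what it is worth, the step you flag as the main obstacle does go through: with $\gamma_1$ odd the terms for consecutive values of $c_1$ pair off and cancel using the opposite-parity relation at degree $n-\gamma_1$ (in the even case at degree $n$ one uses the four-term relation in the combinations $F(x,y)\mp F(y,x)$ according to the parity of $a$; in the odd case at degree $n$ the sub-degree relation is the two-term commutativity relation, not a four-term one, so your phrasing there is slightly off, but the cancellation pattern is the same). So your argument is sound; the paper's choice of $e_1x$ and $e_2x$ as test vectors is simply the device that truncates your $(\gamma_1+1)$-term recursion to at most two terms and eliminates the bookkeeping.
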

\noindent When $b=1$ and $a=2k$ is even, the odd degree relation takes the form
\begin{equation*}
h_1h_{2k}+h_{2k}h_1=2h_{2k+1}.
\end{equation*}
\begin{proof} We prove both relations simultaneously by induction on the total degree
 $a+b$.  First suppose $a+b$ is even.  It suffices to prove $(h_ah_b-h_bh_a,e_kx)=0$ 
for $k=1,2$ since the products $(h_ah_b,e_kx)$ and $(h_bh_a,e_kx)$ equal zero for 
any $k>2$ by Proposition \ref{prop-h}.  Computing graphically,
\begin{equation}\label{eqn-e-relation-proof}\begin{split}
&(h_ah_b,e_1x)=(h_{a-1}h_b,x)+(-1)^a(h_ah_{b-1},x),\\
&(h_ah_b,e_2x)=(-1)^{a-1}(h_{a-1}h_{b-1},x),
\end{split}\end{equation}
and likewise for $(h_bh_a,e_kx)$.  
When $k=1$ the difference $(h_ah_b-h_bh_a,e_1x)$ vanishes by the odd degree 
relation in degree $a+b-1,$ 
and when $k=2$ the difference vanishes by the even degree relation in degree 
$a+b-2$.

For $a+b$ odd, put together terms for $h_ah_b$, $(-1)^ah_bh_a$, 
$(-1)^ah_{a+1}h_{b-1}$, and $h_{b-1}h_{a+1}$ as in equation 
\eqref{eqn-e-relation-proof}.  The result vanishes with $k=1$ by some 
cancellation and the even degree relation in degree $a+b-1$, and with $k=2$ 
by the odd degree relation in degree $a+b-2$.\end{proof}
Since any element of $\sym$ is a linear combination of words in the $e_k$'s as 
well as a linear combination of words in the 
$h_k$'s, the same argument with these families of elements switched proves 
the following.
\begin{prop}\label{prop-h-relations} When $a+b$ is even,
\begin{equation}\label{eqn-h-relation-1}
e_ae_b=e_be_a.
\end{equation}
When $a+b$ is odd,
\begin{equation}\label{eqn-h-relation-2}
e_ae_b+(-1)^ae_be_a=(-1)^ae_{a+1}e_{b-1}+e_{b-1}e_{a+1}.
\end{equation}
\end{prop}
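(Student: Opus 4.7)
The plan is to mirror the proof of Proposition \ref{prop-e-relations} verbatim, swapping the roles of $h_n$ and $e_n$. The first step is to establish the dual characterization
\begin{equation*}
(e_\alpha, h_n) = \begin{cases} 1 & \text{if } \alpha = (1,1,\ldots,1), \\ 0 & \text{otherwise,}\end{cases}
\end{equation*}
which follows from Proposition \ref{prop-h} together with the symmetry of $(\cdot,\cdot)$: expanding $(e_{a_1}\cdots e_{a_r}, h_n) = (e_{a_1}\otimes\cdots\otimes e_{a_r}, \Delta^{r-1}(h_n))$ via the iterated $\Delta(h_n)=\sum h_m\otimes h_{n-m}$ and using that $(e_a,h_m)\ne 0$ forces $a=m\in\{0,1\}$, only factors of the form $(e_1,h_1)=1$ can contribute, so $\alpha=(1^n)$ is the unique surviving composition.

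With this in hand, to prove $e_a e_b = e_b e_a$ for $a+b$ even it suffices to show $(e_a e_b - e_b e_a, h_k x)=0$ for all $k\ge 0$ and $x\in\sym$. An adjointness computation shows $(e_a e_b, h_k x)=0$ whenever $k>2$: expand with $\Delta(h_k)=\sum h_j\otimes h_{k-j}$, and the dual characterization forces $j, k-j\in\{0,1\}$. For $k=1,2$ direct computation from $\Delta(e_n)=\sum e_j\otimes e_{n-j}$ together with the braided product on $\sym^{\otimes 2}$ gives the precise analogues of equation \eqref{eqn-e-relation-proof}:
\begin{align*}
(e_a e_b, h_1 x) &= (e_{a-1}e_b, x) + (-1)^a(e_a e_{b-1}, x), \\
(e_a e_b, h_2 x) &= (-1)^{a-1}(e_{a-1}e_{b-1}, x),
\end{align*}
with the analogous formulas obtained by swapping $a$ and $b$. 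Induction on the total degree $a+b$ then closes the even case: the $k=1$ difference vanishes by the odd relation in degree $a+b-1$, and the $k=2$ difference vanishes by the even relation in degree $a+b-2$.

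The odd-degree relation is handled by the same four-term combination $e_a e_b + (-1)^a e_b e_a - (-1)^a e_{a+1} e_{b-1} - e_{b-1} e_{a+1}$: pairing with $h_1 x$, the ``diagonal'' contributions cancel and one is left with two even-degree commutators in degree $a+b-1$ that vanish by the already-proved even relation; pairing with $h_2 x$ gives an odd-degree combination in degree $a+b-2$ which vanishes by induction. The main obstacle is bookkeeping for the signs $(-1)^a$, $(-1)^{a+b}$, $(-1)^{b-1}$ that arise each time a generator is moved past another via the braided product, but these are precisely the signs appearing in the proof of Proposition \ref{prop-e-relations}, reorganized consistently; no new sign phenomena appear because in the pairings $(e_ae_b,h_1x)$ and $(e_ae_b,h_2x)$ only single strands connect any black $e$-platform to the relevant white $h_1$ or $h_2$ platform, so the extra $(-1)^{\langle k-1\rangle}$ factors from multi-strand black-black connections never enter.
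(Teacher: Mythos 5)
Your proposal is correct and is essentially the paper's own proof: the paper proves Proposition \ref{prop-h-relations} by repeating the argument of Proposition \ref{prop-e-relations} with the roles of the $h_k$'s and $e_k$'s switched, i.e.\ pairing against $h_kx$, using that $(e_\alpha,h_n)$ vanishes unless $\alpha=(1^n)$, and running the same simultaneous induction on $a+b$. Your explicit derivation of the dual characterization of $(e_\alpha,h_n)$ and of the analogues of \eqref{eqn-e-relation-proof} just fills in details the paper leaves implicit in its diagrammatic calculus.
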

There are similar relations involving both $h$'s and $e$'s.
\begin{prop}\label{prop-e-h-relations} When $a+b$ is even,
\begin{equation}\label{eqn-e-h-relation-1}
h_ae_b=e_bh_a.
\end{equation}
When $a+b$ is odd,
\begin{equation}\label{eqn-e-h-relation-2}
h_ae_b+(-1)^ae_bh_a=(-1)^ah_{a+1}e_{b-1}+e_{b-1}h_{a+1}.
\end{equation}
\end{prop}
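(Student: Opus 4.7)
The plan is to prove both parts by induction on the total degree $a+b$, treating the even and odd cases simultaneously, in the style of the proofs of Propositions \ref{prop-e-relations} and \ref{prop-h-relations}. The base cases $a+b \leq 2$ are immediate from the definitions. For the inductive step, use nondegeneracy of the bilinear form: it suffices to show $(h_ae_b - e_bh_a, h_kx) = 0$ (even case) and the analogous four-term identity (odd case) for all $k\geq 1$ and all $x\in \sym$, since products of $h$'s span $\sym$.

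Using adjointness $(y,h_kx) = (h_k\otimes x, \Delta(y))$ together with the bialgebra identity $\Delta(h_ae_b) = \Delta(h_a)\Delta(e_b)$, and computing the key pairings via Proposition \ref{prop-h} and the identity $(h_i, h_m) = \delta_{im}$, one obtains
\begin{equation*}
(h_ie_j, h_k) = (e_jh_i, h_k) = \delta_{j,1}\delta_{i,k-1} + \delta_{j,0}\delta_{i,k}.
\end{equation*}
Note the two contributions, reflecting that $h_k$ may link to either an $h$-part or an $e$-part. Substituting back gives the two-term expansions
\begin{align*}
(h_ae_b, h_kx) &= (-1)^{a-k+1}(h_{a-k+1}e_{b-1}, x) + (h_{a-k}e_b, x),\\
(e_bh_a, h_kx) &= (-1)^{(b-1)(k-1)}(e_{b-1}h_{a-k+1}, x) + (-1)^{bk}(e_bh_{a-k}, x).
\end{align*}

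Each of the four resulting terms is a mixed pairing of total degree $a+b-k < a+b$, to which the inductive hypothesis applies. In the even case, the difference splits into two sub-expressions of the form $(h_pe_q \pm e_qh_p, x)$ with $p+q < a+b$; when $k$ is even both vanish by the inductive even case, and when $k$ is odd they vanish after combining via the inductive odd case (one checks this produces a factor $(-1)^a + (-1)^{a-k} = 0$). The odd case is handled analogously: pair the four-term combination $h_ae_b + (-1)^ae_bh_a - (-1)^ah_{a+1}e_{b-1} - e_{b-1}h_{a+1}$ against $h_kx$, producing eight terms which collapse via the inductive odd relation in degree $a+b-k$, together with the identity $(-1)^{a+b} = -1$.

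The main obstacle is the sign bookkeeping. Unlike the proof of Proposition \ref{prop-e-relations}, where testing against $e_1x$ and $e_2x$ suffices because a single $e_k$-platform with $k > 2$ cannot be matched by only two bottom white platforms, the mixed bottom $h_ae_b$ permits contributions from $h_k$ of arbitrary size, which is why the expansion above has two terms instead of a single tightly constrained one. Consequently, the cancellation at each $k$ requires simultaneously invoking both parity cases of the inductive hypothesis and carefully matching signs according to the parities of $a$, $b$, and $k$; Propositions \ref{prop-e-relations} and \ref{prop-h-relations} may also be invoked to simplify any pure-$h$ or pure-$e$ remainders that arise.
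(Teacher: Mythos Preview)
Your approach is correct and essentially identical to the paper's: both argue by simultaneous induction on $a+b$, pair against $h_kx$ for all $k\ge 1$, derive the same two-term expansions for $(h_ae_b,h_kx)$ and $(e_bh_a,h_kx)$, and then split according to the parity of $k$ to invoke the inductive hypothesis. The only difference is bookkeeping: the paper writes out all eight terms in the odd case and notes explicitly that $k$ even requires the inductive odd relation while $k$ odd requires the inductive even relation (not just the odd relation, as your sketch suggests), but this is a matter of detail rather than strategy.
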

\begin{proof} The proof is along the same lines as that of Proposition \ref{prop-e-relations}, but with the slight complication that terms with $k>2$ do contribute.  As in that proof, we prove both relations simultaneously by induction on the total degree $a+b$.

For $a+b$ even, we compute
\begin{equation*}\begin{split}
&(h_ae_b,e_kx)=(h_{a-k}e_b+(-1)^{b-k+1}h_{a-k+1}e_{b-1},x),\\
&(e_bh_a,e_kx)=((-1)^{kb}e_bh_{a-k}+(-1)^{(k-1)(b-1)}e_{b-1}h_{a-k+1},x).
\end{split}\end{equation*}
We want to show that the difference of the two left arguments on the right-hand side is zero.  For $k$ even, this difference vanishes by applying \eqref{eqn-e-h-relation-1} twice in degree $a+b-k$.  For $k$ odd, it vanishes by applying \eqref{eqn-e-h-relation-2}.

For $a+b$ odd, we compute
\begin{equation*}\begin{split}
&(h_ae_b,h_kx)=(h_{a-k}e_b+(-1)^{a-k+1}h_{a-k+1}e_{b-1},x),\\
&(e_bh_a,h_kx)=((-1)^{kb}e_bh_{a-k}+(-1)^{(k-1)(b-1)}e_{b-1}h_{a-k+1},x),\\
&(h_{a+1}e_{b-1},h_kx)=(h_{a-k+1}e_{b-1}+(-1)^{a-k}h_{a-k+2}e_{b-2},x),\\
&(e_{b-1}h_{a+1},h_kx)=((-1)^{k(b+1)}e_{b-1}h_{a-k+1}+(-1)^{(k-1)b}e_{b-2}h_{a-k+2},x).
\end{split}\end{equation*}
Considering the linear combination $h_ae_b+(-1)^ae_bh_a$, the argument which is paired with $x$ is
\begin{equation}\label{eqn-larg}
h_{a-k}e_b-(-1)^{(k+1)b}e_bh_{a-k}+(-1)^{b-k}h_{a-k+1}e_{b-1}+(-1)^{k(b-1)}e_{b-1}h_{a-k+1}.
\end{equation}
For the linear combination $h_{a+1}e_{b-1}+(-1)^ae_{b-1}h_{a+1}$, we get
\begin{equation}\label{eqn-rarg}
(-1)^{k(b+1)}e_{b-1}h_{a-k+1}+(-1)^{b+1}h_{a-k+1}e_{b-1}+(-1)^{(k-1)b}e_{b-2}h_{a-k+2}+(-1)^kh_{a-k+2}e_{b-2}.
\end{equation}
For $k$ even, the difference of expressions \eqref{eqn-larg} and \eqref{eqn-rarg} vanishes by applying \eqref{eqn-e-h-relation-2} twice in degree $a+b-k$.  For $k$ odd, the difference vanishes by applying \eqref{eqn-e-h-relation-1} twice.\end{proof}

\begin{cor}\label{cor-bases} $\sym_n$ is a free $\Bbbk$-module, of which the 
families $\lbrace h_\lambda\rbrace_{\lambda\vdash n}$ and 
$\lbrace e_\lambda\rbrace_{\lambda\vdash n}$ are both bases.  Hence $\sym_n$ 
has the same graded rank as in the even ($q=1$) case, namely the number of 
partitions of $n$.\end{cor}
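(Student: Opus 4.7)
The plan is to establish the corollary in two stages: (i) $\{h_\lambda\}_{\lambda \vdash n}$ and $\{e_\lambda\}_{\lambda \vdash n}$ each span $\sym_n$, and (ii) each family is $\Bbbk$-linearly independent. For spanning, since $\sym'$ is free associative on the $h_i$'s, the composition monomials $\{h_\alpha : |\alpha|=n\}$ span $\sym'_n$ and hence $\sym_n$. I would show by induction on a well-founded order on compositions (with partitions as extremal elements) that every $h_\alpha$ is a $\Bbbk$-linear combination of $h_\lambda$ for $\lambda$ a partition. Given a non-partition $\alpha = (a_1, \ldots, a_r)$ with an adjacent ascent $a_i < a_{i+1}$: when $a_i + a_{i+1}$ is even, swap via \eqref{eqn-e-relation-1}; when odd, solve \eqref{eqn-e-relation-2} for $h_{a_i}h_{a_{i+1}}$ as a combination of products with local index pairs $(a_{i+1}, a_i)$, $(a_i+1, a_{i+1}-1)$, and $(a_{i+1}-1, a_i+1)$. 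The base cases (two-part compositions with a minimal part of $1$) collapse via relations like $h_1 h_b + h_b h_1 = 2 h_{b+1}$ for $b$ even. The symmetric argument via Proposition \ref{prop-h-relations} gives spanning by $\{e_\lambda\}_{\lambda \vdash n}$.

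For linear independence, I would compute the pairing matrix $((h_\lambda, e_{\mu^T}))_{\lambda, \mu \vdash n}$ using the diagrammatic interpretation from Subsection \ref{subsec-e-h}: $h_\lambda$ as white platforms on top (widths $\lambda_1, \ldots, \lambda_r$), $e_{\mu^T}$ as black platforms on bottom (widths $\mu^T_1, \ldots, \mu^T_s$). By iterating Proposition \ref{prop-h} (which forces strands from any black platform into distinct white platforms, so at most one strand per black-white pair), valid diagrams biject with $0$-$1$ matrices having row sums $\lambda$ and column sums $\mu^T$. A classical dominance argument gives non-emptiness only when $\lambda \leq \mu$, and uniqueness (the incidence matrix of the Young diagram of $\lambda$) when $\lambda = \mu$. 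A direct sign calculation---combining the $(-1)^{\ell(\sigma)}$ crossing factor with the $(-1)^{\langle k-1 \rangle}$ factor for each pair of black platforms connected by $k$ strands---then shows the diagonal entry is $\pm 1$. So the matrix is triangular in dominance order with $\pm 1$ diagonal, giving linear independence of $\{h_\lambda\}$ and, symmetrically, of $\{e_\lambda\}$.

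Combining these, $\sym_n$ is generated over $\Bbbk$ by $p(n)$ linearly independent elements, hence a free $\Bbbk$-module of rank $p(n)$ with both $\{h_\lambda\}$ and $\{e_\lambda\}$ as bases. The main obstacle will be the reduction step in the spanning argument. As the identity $h_2 h_3 = h_3 h_2 + 2 h_4 h_1 - 2 h_5$ illustrates, a single application of \eqref{eqn-e-relation-2} (here at the pair $(3,2)$) can introduce a non-partition term $h_1 h_4$ that must be further reduced using the $(4,1)$ relation. Designing a well-founded order on compositions that certifies termination of this iterated rewriting at partition form, in every case, is the technical heart of the argument.
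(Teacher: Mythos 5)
Your strategy is sound, and it is worth separating its two halves when comparing with the paper. For linear independence you pair $h_\lambda$ against $e_{\mu^T}$ and use triangularity of the resulting matrix with $\pm1$ diagonal entries (Gale--Ryser vanishing off dominance, uniqueness and sign $\pm1$ of the unique lite diagram on the diagonal); this is essentially the ``semi-orthogonality'' route that the paper records in Proposition \ref{prop-e-h-semi-orthogonality} and mentions only as an alternate argument after the corollary. The paper's actual proof of Corollary \ref{cor-bases} is different: it works over $\Z$, notes that $\sym_\Z$ is free as the quotient of a free module by the radical of a bilinear form, reduces mod $2$ (where all signs vanish, so $h_\lambda$ has the same reduction as the classical $h_\lambda^{\mathrm{even}}$, whose independence is known), and then base-changes via $\sym=\sym_\Z\otimes_\Z\Bbbk$. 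Your route buys a direct proof of freeness with the stated bases over an arbitrary $\Bbbk$, with invertibility of an integer matrix of determinant $\pm1$ doing all the work and no appeal to the even case or to base change; its cost is that you must actually carry out the diagrammatic count of Proposition \ref{prop-e-h-semi-orthogonality}, which the mod-$2$ trick sidesteps. There is no circularity in doing so, since that count rests only on Proposition \ref{prop-h} and the diagrammatic sign rules, not on the corollary.

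On the spanning half your plan coincides with the paper's, but the step you flag as unresolved is exactly the point that needs care, and as stated your rewriting move has a wrinkle: solving \eqref{eqn-e-relation-2} with $a=a_i$, $b=a_{i+1}$ so as to produce the local pairs $(a_{i+1},a_i)$, $(a_i+1,a_{i+1}-1)$, $(a_{i+1}-1,a_i+1)$ is vacuous whenever $a_{i+1}=a_i+1$ (the two sides of the relation then agree termwise), so that move alone cannot reduce an ascent such as $h_2h_3$. Your own worked example implicitly uses the other orientation, $a=a_{i+1}$, $b=a_i$, which yields the pairs $(a_{i+1},a_i)$, $(a_{i+1}+1,a_i-1)$, $(a_i-1,a_{i+1}+1)$. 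With that orientation the termination argument you are looking for is the one the paper supplies later, in the proof of Lemma \ref{lemma-psi-3-e-h}: among the output terms, only $h_{a_i-1}h_{a_{i+1}+1}$ fails to be lexicographically larger, and its left index strictly decreases, so iterating drives the left index down to $1$, where $h_1h_{2k}+h_{2k}h_1=2h_{2k+1}$ finishes the reduction; every resulting word is then lexicographically larger than the one you started with, and since there are finitely many compositions of $n$, downward induction on the lexicographic order terminates at partitions. Once you adopt that orientation, your proposal closes; nothing else in it fails.
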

\begin{proof} Any element of $\sym$ is a linear combination of words in the $h$'s.  
By the relations \eqref{eqn-e-relation-1}, \eqref{eqn-e-relation-2}, only words 
whose subscripts are in non-increasing order are needed; that is, 
$\lbrace h_\lambda\rbrace_{\lambda\vdash n}$ is a spanning set.  Now let 
$\sym_\Z$ be $\sym$ considered over $\Bbbk=\Z$.  Since it is expressed as 
the quotient of a free $\Z$-module by the radical of a bilinear form, $\sym_\Z$ 
is itself a free $\Z$-module.  The mod 2 reduction of $h_\lambda$ coincides with 
the mod 2 reduction of the even ($q=1$) complete symmetric function 
$h_\lambda^\text{even}$, so the spanning set $\lbrace h_\lambda
\rbrace_{\lambda\vdash n}$ is linearly independent in $\sym_{\Z/2}$, hence in $\sym_\Z$.  The same 
argument works for the family $\lbrace e_\lambda\rbrace_{\lambda\vdash n}$.  
Now $\sym_\Z$ is a free $\Z$-module with the required bases, so 
$\sym=\sym_\Z\otimes_\Z\Bbbk$ is a free $\Bbbk$-module with the required 
bases.\end{proof}

\begin{cor}\label{cor-defining-relations} The algebra $\sym$ can be presented by generators $h_0=1,h_1,h_2\ldots$ subject to defining relations \eqref{eqn-e-relation-1} and \eqref{eqn-e-relation-2}.  There is an algebra automorphism of $\sym$ which takes $h_n$ to $e_n$ for all $n$.\end{cor}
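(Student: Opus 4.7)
The plan is to derive both assertions from what has already been established. Let $A$ denote the free associative $\Bbbk$-algebra on generators $h_0=1, h_1, h_2,\ldots$, modulo the two-sided ideal generated by relations \eqref{eqn-e-relation-1} and \eqref{eqn-e-relation-2}. Proposition \ref{prop-e-relations} asserts that these relations hold in $\sym$, so there is a surjective graded algebra homomorphism $\pi\colon A\twoheadrightarrow\sym$. Letting $p(n)$ denote the number of partitions of $n$, Corollary \ref{cor-bases} gives $\dim_\Bbbk\sym_n=p(n)$, and the presentation claim reduces to the bound $\dim_\Bbbk A_n\le p(n)$, which I will obtain by showing that the family $\{h_\lambda\}_{\lambda\vdash n}$ already spans $A_n$.

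The spanning argument is the one implicit in the proof of Corollary \ref{cor-bases}: a word $h_{a_1}\cdots h_{a_r}$ containing an ascent $a_i<a_{i+1}$ is rewritten using \eqref{eqn-e-relation-1} if $a_i+a_{i+1}$ is even (a plain swap), and using \eqref{eqn-e-relation-2} if $a_i+a_{i+1}$ is odd. In the odd-parity case the direct form of the relation with parameters $(a_i,a_{i+1})$ is vacuous when $a_{i+1}=a_i+1$, so I will instead invoke the relation with parameters $(a,b)=(a_{i+1},a_i)$ and solve for $h_{a_i}h_{a_{i+1}}$. The result expresses $h_{a_i}h_{a_{i+1}}$ as a linear combination of $h_{a_{i+1}}h_{a_i}$ (a descent), $h_{a_{i+1}+1}h_{a_i-1}$ (also a descent), and $h_{a_i-1}h_{a_{i+1}+1}$ (an ascent, but with smaller left index); iterating on the remaining ascending tail at most $a_i$ times lowers its left index to zero and, using $h_0=1$, replaces the pair by a single generator $h_{a_i+a_{i+1}}$.

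The main obstacle I expect is termination: a local rewrite in the middle of a long word can create new ascents at the neighboring positions $(i-1,i)$ or $(i+1,i+2)$. To handle this I will order compositions of $n$ by the reverse-lexicographic well-order, padding with trailing zeros so that all compositions have a common length. Partitions are precisely the minima of this order within a fixed total degree, and a case-by-case check confirms that each rewrite produced above replaces a composition by linear combinations of compositions that are either strictly shorter (through the $h_{a_i+a_{i+1}}$ collapse) or strictly smaller in this order. This establishes $\dim_\Bbbk A_n\le p(n)$, which combined with the surjection $\pi$ and $\dim_\Bbbk\sym_n=p(n)$ forces $\pi$ to be an isomorphism on each graded piece.

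For the automorphism, Proposition \ref{prop-h-relations} says that the elements $e_n\in\sym$ satisfy relations of exactly the same form as \eqref{eqn-e-relation-1} and \eqref{eqn-e-relation-2}. By the universal property of the presentation just established, there is a unique algebra endomorphism $\omega\colon\sym\to\sym$ with $\omega(h_n)=e_n$ for all $n\ge 0$. Its image contains every $e_\lambda$, hence contains the basis $\{e_\lambda\}_{\lambda\vdash n}$ of Corollary \ref{cor-bases} in each graded degree, so $\omega$ is surjective on each finite-dimensional graded piece and therefore an automorphism.
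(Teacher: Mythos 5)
Your argument is correct and is essentially the paper's own proof spelled out in more detail: a surjection from the abstractly presented algebra onto $\sym$, the rewriting observation (already implicit in the proof of Corollary \ref{cor-bases}) that relations \eqref{eqn-e-relation-1} and \eqref{eqn-e-relation-2} let one express any word in the span of $\{h_\lambda\}_{\lambda\vdash n}$, a rank comparison forcing the surjection to be an isomorphism, and then Proposition \ref{prop-h-relations} together with the basis $\{e_\lambda\}$ to get the automorphism. The only quibble is cosmetic: in the termination step what is really needed is that each rewrite strictly increases the composition in (say) left-to-right lexicographic order (equivalently decreases in its reverse) and that the ascent-free words are exactly the partition words, rather than the literal claim that partitions are the ``minima'' of that total order.
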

\begin{proof} The first statement follows immediately from Proposition \ref{prop-e-relations} and Corollary \eqref{cor-bases}.  The second follows from Proposition \ref{prop-h-relations} and the fact that any word in the $h_n$'s can be expressed as a word in the $e_n$'s (by equation \eqref{eqn-defn-e}).\end{proof}
\noindent The automorphism $h_n\mapsto e_n$ is later denoted $\psi_1$.

\vspace{0.07in}

\noindent An alternate argument deduces that 
$\lbrace e_\lambda\rbrace_{\lambda\vdash n}$ is a basis of $(\sym_\Z)_n$ 
from the fact that $\lbrace h_\lambda\rbrace_{\vdash n}$ is and the following 
``semi-orthogonality'' property.  We call a minimal coset representative 
\textit{lite} if any two platforms in its diagram are connected by at most one strand.
\begin{prop}\label{prop-e-h-semi-orthogonality}
\begin{enumerate}
\item For all partitions $\lambda$,
\begin{equation}\label{eqn-e-h-semi-orthogonality-1}
(h_\lambda,e_{\lambda^T})=(-1)^{\ell(w_\lambda)},
\end{equation}
where $\ell(w_\lambda)$ is the Coxeter length of the unique lite minimal double coset 
representative $w_\lambda$ in $S_\lambda\backslash S_n/S_{\lambda^T}$.
\item If $\lambda$ is a partition and $\alpha$ is a composition with 
$\alpha>\lambda^T$ in the lexicographic order, then
\begin{equation}\label{eqn-e-h-semi-orthogonality-2}
(h_\lambda,e_\alpha)=(e_\lambda,h_\alpha)=0.
\end{equation}\end{enumerate}\end{prop}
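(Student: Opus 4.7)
Both statements will follow from the diagrammatic description of the bilinear form. To compute either $(h_\lambda, e_\alpha)$ or $(e_\lambda, h_\alpha)$, one assembles a rectangular diagram with white platforms on one horizontal side and black platforms on the other. Since all black platforms lie on a single side, no two of them are directly joined by strands, so the $(-1)^{\ang{k-1}}$ contribution is always trivial and the sign of each diagram reduces to $(-1)^{\ell(\sigma)}$. Moreover, the condition that at most one strand joins each white-black platform pair forces every contributing diagram to be encoded uniquely by a $\{0,1\}$-matrix $M=(M_{ij})$ whose row and column sums are the widths of the white and black platforms respectively. So both $(h_\lambda,e_\alpha)$ and $(e_\lambda,h_\alpha)$ become signed counts of $\{0,1\}$-matrices with the prescribed marginals, weighted by $(-1)^{\ell(\sigma)}$ for the associated minimal double coset representative $\sigma$.

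For Part 1, take $\alpha=\lambda^T$, so $M$ must have row sums $\lambda_i$ and column sums $\lambda_j^T$. The plan is to show that the unique such matrix is the cell-indicator of the Young diagram, $M_{ij}=1$ iff $(i,j)\in\lambda$. This is a straightforward induction on columns: column $1$ requires $\lambda_1^T$ ones, equal to the number of nonzero rows, forcing $M_{i1}=1$ for $i\leq\lambda_1^T$ and $M_{i1}=0$ otherwise; deleting this column reduces to the analogous problem for the partition obtained from $\lambda$ by removing its first column, whose transpose is $(\lambda_2^T,\lambda_3^T,\ldots)$. Uniqueness follows. The diagram coming from this indicator matrix is by definition lite, so by the uniqueness assertion in the proposition it equals $w_\lambda$, yielding $(h_\lambda,e_{\lambda^T})=(-1)^{\ell(w_\lambda)}$.

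For Part 2, suppose $\alpha>\lambda^T$ in lex order; we may assume $|\alpha|=|\lambda|$ as the pairing vanishes trivially otherwise. Let $i$ be the smallest index where $\alpha$ and $\lambda^T$ differ, so $\alpha_j=\lambda_j^T$ for $j<i$ and $\alpha_i>\lambda_i^T$. Counting entries in the first $i$ columns of a hypothetical $M$ gives $\sum_{j\leq i}\alpha_j>\sum_{j\leq i}\lambda_j^T$ from the column sums, while each row $r$ contributes at most $\min(i,\lambda_r)$ ones to those columns, for a total bound of $\sum_r\min(i,\lambda_r)=\sum_{j\leq i}\lambda_j^T$ (the number of cells of $\lambda$ lying in its first $i$ columns). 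This contradiction shows no such $M$ exists, so $(h_\lambda,e_\alpha)=0$. The same argument with black platforms at the bottom and white platforms at the top yields $(e_\lambda,h_\alpha)=0$.

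The genuine verifications are (a) that the $(-1)^{\ang{k-1}}$ contribution truly drops out because all black platforms sit on a single horizontal side, and (b) the uniqueness of the cell-indicator as the only $\{0,1\}$-matrix with row sums $\lambda$ and column sums $\lambda^T$. The only mild obstacle is keeping the lex comparison straight in Part 2, which is handled cleanly by isolating the first disagreement index $i$ and applying the standard column-sum bound; no subtlety about the Coxeter length enters, since cancellations are unnecessary.
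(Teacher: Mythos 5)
Your proof is correct and follows essentially the same route as the paper: both reduce to the diagrammatic sign-count of lite double coset diagrams, identify the unique diagram for $\alpha=\lambda^T$ (your $\{0,1\}$-matrix uniqueness is exactly the paper's unique lite diagram), and rule out $\alpha>\lambda^T$ by a pigeonhole/counting argument. Your packaging via $\{0,1\}$-matrices with prescribed marginals and partial-sum bounds, and your explicit remark that the $(-1)^{\ang{k-1}}$ factors are absent since all black platforms lie on one side, are just slightly more formal versions of the paper's argument.
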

A combinatorial description of $\ell(w_\lambda)$ is the number of strictly southwest-northeast pairs of boxes in the Young diagram corresponding to the partition $\lambda$.  One way to compute this quickly is to label each box of the Young diagram corresponding to $\lambda$ with the number of boxes which are strictly to the northeast.  Then $\ell(w_\lambda)$ is the sum of these numbers.  For example,
\begin{equation*}
\young(0000,3210,64,7)\qquad\ell(w_{4421})=22.
\end{equation*}

Before proving the proposition, we briefly recall the lexicographic order and the 
domainance partial order.  Let $\alpha=(a_1,\ldots,a_r)$, $\beta=(b_1,\ldots,b_s)$ 
be compositions of $n$.  We say $\alpha<\beta$ in the \textit{lexicographic order} 
(also called the \textit{dictionary order}) if $a_i<b_i$, where $i$ is the minimal index 
$j$ at which $a_j\neq b_j$.

Restricting our attention to partitions $\lambda=(\lambda_1,\ldots,\lambda_r)$, 
$\mu=(\mu_1,\ldots,\mu_s)$ of $n$, the lexicographic order refines the following 
partial order: we say $\lambda\leq\mu$ in the \textit{dominance partial order} if 
\begin{equation*}
\lambda_1+\lambda_2+\ldots+\lambda_i\leq\mu_1+\mu_2+\ldots+\mu_i\text{ for all }i=1,2,\ldots,n,
\end{equation*}
where we pad $\lambda,\mu$ by trailing zeroes when necessary.  The dominance partial order is a total order if and only if $n\leq5$ and is graded if and only if $n\leq 6$.  The lowest degree dominance-incomparable pairs are $\lbrace(3,1,1,1),(2,2,2)\rbrace$ and $\lbrace(4,1,1),(3,3)\rbrace$.

If all partitions of $n$ are listed lexicographically, it is not true that reversing the order swaps partitions whose corresponding Young diagrams are transposes of each other (this first occurs at $n=6$).  One can, however, refine the dominance partial order in such a way that this property holds.

\vspace{0.07in}
\begin{proof}[Proof of Proposition \ref{prop-e-h-semi-orthogonality}] The proof, except for the determination of the sign in equation \eqref{eqn-e-h-semi-orthogonality-1}, is exactly as in the case of classical symmetric functions.  To compute an inner product $(h_\lambda,e_\alpha)$, we must sign-count minimal double coset representative diagrams connecting a $\lambda$-arrangement of white platforms and a $\alpha$-arrangement of black platforms such that no pair of a black and a white diagram are connected by more than one strand.  For $\alpha=\lambda^T$, $\alpha_i$ equals the number of rows of $\lambda$ of size at least $i$; diagrammatically, the $i$-th black platform has exactly one strand going to each white platform of size at least $i$.  Hence there is a unique lite diagram connecting these two platform arrangements and it counts as $(-1)^{\ell(w)}$: in this diagram, the strands of the $i$-th black platform go to the first $\alpha_i$ white platforms, and these white platforms are precisely those white platforms which accept at least $i$ strands; and vice versa, switching black and white and switching $\lambda$ and $\alpha$.  For example, the unique lite diagram in computing $(h_{421},e_{3211})$ is:

\begin{equation*}
\hackcenter{\begin{tikzpicture}[scale=.7]
    \draw[thick] (1,0) [out=90, in=-90] to (1,4);
    \draw[thick] (2,0) [out=90, in=-90] to (5,4);
    \draw[thick] (3,0) [out=90, in=-90] to (7,4);
    \draw[thick] (4,0) [out=90, in=-90] to (2,4);
    \draw[thick] (5,0) [out=90, in=-90] to (6,4);
    \draw[thick] (6,0) [out=90, in=-90] to (3,4);
    \draw[thick] (7,0) [out=90, in=-90] to (4,4);
    \filldraw[draw=black, fill=black] (.75,-.25) rectangle (3.25,.25);
    \filldraw[draw=black, fill=black] (3.75,-.25) rectangle (5.25,.25);
    \filldraw[draw=black, fill=black] (5.75,-.25) rectangle (6.25,.25);
    \filldraw[draw=black, fill=black] (6.75,-.25) rectangle (7.25,.25);
    \filldraw[draw=black, fill=white] (.75,3.75) rectangle (4.25,4.25);
    \filldraw[draw=black, fill=white] (4.75,3.75) rectangle (6.25,4.25);
    \filldraw[draw=black, fill=white] (6.75,3.75) rectangle (7.25,4.25);
\end{tikzpicture}}
\end{equation*}

For $\alpha>\lambda^T$, let $i$ be minimal such that $\alpha_i>\lambda^T_i$.  
Then filling a potential diagram as above, at the stage of connecting the $i$-th black 
platform, there are fewer than $\alpha_i$ white platforms which can still accept a 
new strand, so we are forced to send two strands from this black platform to the same 
white platform (pigeonhole principle).  So the diagram is zero.  For example, consider 
the next step in filling the unfinished diagram below for $\lambda=(4,2,1)$, 
$\alpha=(3,2,2)$:

\begin{equation*}
\hackcenter{\begin{tikzpicture}[scale=.7]
    \draw[thick] (1,0) [out=90, in=-90] to (1,4);
    \draw[thick] (2,0) [out=90, in=-90] to (5,4);
    \draw[thick] (3,0) [out=90, in=-90] to (7,4);
    \draw[thick] (4,0) [out=90, in=-90] to (2,4);
    \draw[thick] (5,0) [out=90, in=-90] to (6,4);
    \draw[thick] (6,0) [out=90, in=-90] to (3,4);
    \draw[thick] (7,0) [out=90, in=-90] to (7,.75);
    \draw[thick] (7,2.25) [out=90, in=-90] to (4,4);
    \filldraw[draw=black, fill=black] (.75,-.25) rectangle (3.25,.25);
    \filldraw[draw=black, fill=black] (3.75,-.25) rectangle (5.25,.25);
    \filldraw[draw=black, fill=black] (5.75,-.25) rectangle (7.25,.25);
    \filldraw[draw=black, fill=white] (.75,3.75) rectangle (4.25,4.25);
    \filldraw[draw=black, fill=white] (4.75,3.75) rectangle (6.25,4.25);
    \filldraw[draw=black, fill=white] (6.75,3.75) rectangle (7.25,4.25);
    \node at (7,1) {?};
    \node at (7,2) {?};
\end{tikzpicture}}
\end{equation*}

\noindent Connecting the strands marked ``?'' would result in a non-lite diagram.\end{proof}

For the remainder of this subsection, let $\Bbbk=\Z$.  Recall the representation 
theoretic interpretation of the even ($q=1$) analogue of Proposition 
\ref{prop-e-h-semi-orthogonality}: View $\sym$ as 
\begin{equation*}
K_0(S)=\bigoplus_{n\geq0}K_0(\C[S_n])
\end{equation*}
via the Frobenius characteristic 
map.  Since the algebra $\C[S_n]$ is semisimple, its usual and split Grothendieck groups are isomorphic; the simples and the indecomposables 
coincide, and they are all projective.  The multiplication and 
comultiplication in $K_0(S)$ come from induction and restriction between 
parabolic subgroups, and the bilinear form is determined by
\begin{equation*}
([V],[W])=\dim\Hom_{S_n}(V,W)
\end{equation*}
when $V,W$ are representations of $S_n$.  Under this identification,
\begin{equation*}\begin{split}
h_\lambda&\text{ corresponds to }[I_\lambda]=[\Ind_{S_\lambda}^{S_n}(L_{(n)})],\\
e_\lambda&\text{ corresponds to }[I_\lambda^-]=[\Ind_{S_\lambda}^{S_n}(L_{(1^n)})],
\end{split}\end{equation*}
where $|\lambda|=n$, $S_\lambda$ is the parabolic subgroup $S_{\lambda_1}\times\cdots\times S_{\lambda_r}\subseteq S_n$, and $L_\lambda$ is the irreducible representation of $S_n$ corresponding to the partition $\lambda$.  In particular, $L_{(n)}$ is the one-dimensional trivial representation and $L_{(1^n)}$ is the one-dimensional sign representation.  The $q=1$ analogue of the first statement of the Proposition (which is that $(h_\lambda,e_{\lambda^T})=1$) follows from the fact that the induced representations $I_\lambda$ and $I_{\lambda^T}^-$ share a unique irreducible summand, which occurs with multiplicity 1; this is the irreducible representation $L_\lambda$.  The $q=1$ analogue of the second statement follows from the absence of common irreducible summands in certain induced representations; this can be used to prove that all the $L_\lambda$'s constructed in this way are distinct.  We do not know an analogous representation theoretic interpretation (categorification) of the odd ($q=-1$) case.  As the sign in the first statement of the Proposition \ref{prop-e-h-semi-orthogonality} makes evident, dg-algebras or similar structures will likely be necessary to categorify $\sym$.

\vspace{0.07in}

Introduce the notation
\begin{align*}
&H_{\geq\lambda}=\text{span}_\Z\lbrace h_\mu:\mu\geq\lambda\rbrace,\\
&E_{\geq\lambda}=\text{span}_\Z\lbrace e_\mu:\mu\geq\lambda\rbrace
\end{align*}
and likewise with $\geq$ replaced by one of $\lbrace\leq,>,<\rbrace$ (lexicographic order), as subspaces of $\sym_n$ for $n=|\lambda|$.  

The following lemma will be useful in studying odd Schur functions.
\begin{lem}\label{lem-E-geq-nondegenerate} For any partition $\lambda\vdash n$, the bilinear form is nondegenerate when restricted to the subspaces $H_{\geq\lambda}$ and $E_{>\lambda^T}$ of $\sym_n$.\end{lem}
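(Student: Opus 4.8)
The plan is to reduce the statement modulo $2$ to the classical ($q=1$) case, where nondegeneracy with \emph{unit} determinant comes from the orthonormality and integrality of the Schur basis. This deliberately avoids the discrepancy between the lexicographic and transpose orders (which the text notes first appears at $n=6$), since the only order-theoretic input needed is that the lexicographic order refines the dominance order.

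Consider first the subspace $H_{\geq\lambda}$. Write $U=\{\mu\vdash n:\mu\geq\lambda\}$ and let $G=[(h_\mu,h_\nu)]_{\mu,\nu\in U}$ be the Gram matrix of the odd form on $H_{\geq\lambda}$ in the basis $\{h_\mu\}_{\mu\in U}$ (Corollary \ref{cor-bases}); this is an integer matrix, and the restricted form is nondegenerate precisely when $\det G\neq0$. By \eqref{eqn-odd-bilform} each entry is $(h_\mu,h_\nu)=\sum_{c}(-1)^{\ell(c)}$, which reduced modulo $2$ equals the number of double cosets, i.e. the corresponding entry of the even Gram matrix $G_1=[\langle h_\mu,h_\nu\rangle]_{\mu,\nu\in U}$. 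Thus $G\equiv G_1\pmod 2$, and it suffices to prove that $\det G_1$ is odd. For this I would invoke the classical picture: the Schur functions $\{s_\mu:\mu\geq\lambda\}$ form an orthonormal integral basis of the even $H_{\geq\lambda}$. Indeed, by Jacobi--Trudi each $s_\mu$ equals $h_\mu$ plus an integral combination of $h_\nu$ with $\nu>_{\mathrm{dom}}\mu$, and since lexicographic order refines dominance every such $\nu$ again lies in $U$; hence the transition matrix from $\{s_\mu\}_{\mu\in U}$ to $\{h_\mu\}_{\mu\in U}$ is integral and unitriangular, of determinant $\pm1$. As $\langle s_\mu,s_\nu\rangle=\delta_{\mu\nu}$, the matrix $G_1$ is the identity conjugated by this unimodular matrix, so $\det G_1=\pm1$. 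Reducing modulo $2$ gives $\det G\equiv 1$, whence $\det G\neq0$ and the odd form is nondegenerate on $H_{\geq\lambda}$.

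The subspace $E_{>\lambda^T}$ is handled identically. Modulo $2$ the odd elementary functions reduce to the even ones and the odd form reduces to the even form (exactly as used in the proof of Corollary \ref{cor-bases}), so the odd Gram matrix of $\{e_\nu:\nu>\lambda^T\}$ is congruent mod $2$ to its even counterpart. Now apply the classical involution $\omega$, an isometry of the even form interchanging $h_\nu$ and $e_\nu$: it carries the span of $\{e_\nu:\nu>\lambda^T\}$ isometrically onto the span of $\{h_\nu:\nu>\lambda^T\}$. Running the Schur argument of the previous paragraph with the strict inequality and the partition $\lambda^T$ shows the even Gram of $\{h_\nu:\nu>\lambda^T\}$ has determinant $\pm1$, hence so does that of $\{e_\nu:\nu>\lambda^T\}$; the mod $2$ reduction then forces the odd determinant to be odd and in particular nonzero.

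The one point requiring care—and the real obstacle—is that the reduction needs the \emph{classical} determinants to be odd, not merely nonzero. Positive-definiteness of the even form already makes it nondegenerate on every subspace for free, but gives no control on parity; what supplies the needed unit determinant is the existence of the integral orthonormal Schur basis, together with the fact that lexicographic order refines dominance, so that the relevant $s_\mu$ actually lie in $H_{\geq\lambda}$ (respectively in the span indexed by $\mu>\lambda^T$). Everything else is a routine congruence of integer matrices.
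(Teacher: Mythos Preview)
Your proof is correct and takes a different route from the paper's. Both arguments pivot on reduction modulo $2$, but deploy it in different places. The paper argues structurally: from the semi-orthogonality of Proposition~\ref{prop-e-h-semi-orthogonality} it identifies $E_{>\lambda^T}$ with $(H_{\geq\lambda})^\perp$, shows the intersection $H_{\geq\lambda}\cap E_{>\lambda^T}$ is trivial by reducing mod $2$ (any nonzero element would be infinitely $2$-divisible), and then reads off $\det=\pm1$ for each restricted form from the resulting orthogonal direct-sum decomposition together with $\det(\cdot,\cdot)|_{\sym_n}=\pm1$. You instead reduce the Gram matrices themselves modulo $2$, land in the classical setting, and invoke the integral orthonormality of the Schur basis plus the unitriangularity of the Kostka transition (with respect to any total refinement of dominance, in particular lex) to force the classical Gram determinant to equal $1$; the odd determinant is then odd, hence nonzero. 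Your approach treats $H_{\geq\lambda}$ and $E_{>\lambda^T}$ independently and imports classical Schur theory as a black box, while the paper ties the two subspaces together through their orthogonality and stays within what has already been developed in the section. A bonus of your route is that it uses only that lexicographic order refines dominance, so it sidesteps any delicacy in how transposition interacts with lex.
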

\begin{proof} By equation \eqref{eqn-e-h-semi-orthogonality-2} and nondegeneracy of the bilinear form, $(H_{\geq\lambda})^\perp=E_{>\lambda^T}$.  If $H_{\geq\lambda}\cap E_{>\lambda^T}=\lbrace0\rbrace$, then it follows that 
\begin{equation*}
\sym_n=H_{\geq\lambda}\oplus E_{>\lambda^T}
\end{equation*}
is an orthogonal decomposition, so that 
\begin{equation*}
\det(\cdot,\cdot)|_{H_{\geq\lambda}}\det(\cdot,\cdot)|_{E_{>\lambda^T}}=\det(\cdot,\cdot)=\pm1,
\end{equation*}
which implies that both factors on the left hand side are $\pm1$.  And since $H_{\geq\lambda}\cap E_{>\lambda^T}=\lbrace0\rbrace$ after reducing mod 2, the intersection must have been zero over $\Z$: Any nonzero element of $H_{\geq\lambda}\cap E_{>\lambda^T}$ which is zero mod 2 must be divisible by 2.  But then the result of dividing this element by 2 would also be in $H_{\geq\lambda}\cap E_{>\lambda^T}$.\end{proof} 

The previous lemma does not hold with $\geq$ replaced by $\leq$.  For instance, $(h_{11},h_{11})=0$.

%
\subsection{(Anti-)automorphisms, generating functions, and the antipode}\label{subsec-automorphisms}
%

If $\alpha$ is a composition, we write $\alpha^\text{rev}$ for the composition obtained by reverse-ordering $\alpha$.  We also use this notation for partitions $\lambda$ (though, of course, $\lambda^\text{rev}$ is rarely a partition).

We introduce three (anti-)automorphisms of $\sym$ which will be of use to us.  Their definitions and basic properties:
\begin{equation*}\begin{split}
&\psi_1(h_n)=e_n\\
&\qquad\text{(super-)bialgebra automorphism (not an involution),}\\
&\psi_2(h_n)=(-1)^{\binom{n+1}{2}}h_n\\
&\qquad\text{algebra involution (not a coalgebra homomorphism),}\\
&\psi_3(h_n)=h_n\\
&\qquad\text{superalgebra anti-involution (not a coalgebra homomorphism).}
\end{split}\end{equation*}
The meaning of the last being an anti-homomorphism of superalgebras is that
\begin{equation*}
\psi_3(xy)=(-1)^{\deg(x)\deg(y)}\psi_3(y)\psi_3(x)
\end{equation*}
for homogeneous elements $x,y$.  All three of these maps lift to $\sym'$ at $q=-1$.  By Proposition \ref{prop-h-relations} and Corollary \ref{cor-defining-relations}, $\psi_1$ is a well defined algebra automorphism.  Since the lifts of $\psi_2$ and $\psi_3$ to $\sym'$ preserve the defining relations \eqref{eqn-e-relation-1} and \eqref{eqn-e-relation-2}, $\psi_2$ and $\psi_3$ are themselves algebra automorphisms as well.

\vspace{0.07in}

Perhaps the only one of the $\psi_i$ whose introduction requires comment is $\psi_2$.  It is useful because the $h_k$'s and the $\psi_2(e_k)$'s satisfy a family of relations analogous to familiar ones in the even case:
\begin{equation}\label{eqn-gen-fn-consequence}
\sum_{k=0}^n(-1)^{k(n-k)}\psi_2(e_{n-k})h_k=0.
\end{equation}
In order to see the difference in meaning between this equation and equation \eqref{eqn-defn-e}, define the generating functions $H(t)=\sum_th_kt^k$ and $E(t)=\sum_te_kt^k$, with $t$ a variable of degree 1.  Then equation \eqref{eqn-gen-fn-consequence} is equivalent to the equation
\begin{equation}
\psi_2(E(t))H(t)=1
\end{equation}
holding in the ring $\sym[t]$.  The meaning here is that $t$ commutes (respectively skew commutes) with $h_k$ and $e_k$ for $k$ even (respectively odd); that is, we adjoin a super-central variable $t$ and extend $\psi_2$ to $\sym[t]$ by $\psi_2(t)=t$.  We do not know of a generating function interpretation of equation \eqref{eqn-defn-e}.

Another interpretation of $\psi_2$ is that there is a $\Z/2$-grading on $\sym$ determined by placing $h_n$ in degree 0 (respectively degree 1) if $n\equiv0,3$ (mod 4) (respectively $n\equiv1,2$ (mod 4)).  In terms of this grading, $\psi_2$ is the identity on the degree 0 part and minus the identity on the degree 1 part.

\vspace{0.07in}

By definition, $\psi_3(h_n)=h_n$.  However, $\psi_3(e_n)\neq e_n$ for $n>1$.  Nor is it true that $\psi_3(h_\lambda)=\pm h_\lambda$.  For instance,
\begin{equation*}
\psi_3(h_2h_1)=h_1h_2=2h_3-h_2h_1.
\end{equation*}
But $\psi_3$ does preserve $h_\lambda$ up to ``higher order terms.''
\begin{lem}\label{lemma-psi-3-e-h} $\psi_3(h_\lambda)$ is in $H_{\geq\lambda}$, and the coefficient of $h_\lambda$ in $\psi_3(h_\lambda)$ (when expanding in the complete basis) is $(-1)^{\binom{\lambda^T}{2}}$.\end{lem}
\begin{proof} Since $\psi_3$ is an superalgebra anti-homomorphism,
\begin{equation}\label{eqn-psi3-sign}
\psi_3(h_\lambda)=(-1)^{\sum_{i<j}\lambda_j\lambda_j}h_{\lambda^\text{rev}}.
\end{equation}
Now consider all compositions of a fixed degree to be ordered lexicographically.  By induction, then, it suffices to show that whenever $a<b$, $h_ah_b$ is in $H_{\geq(b,a)}$.  If $a+b$ is even, then $h_ah_b=h_bh_a$ and we are done.  If $a+b$ is odd, apply the odd degree $h$-relation:
\begin{equation*}
h_ah_b=(-1)^ah_bh_a+h_{b+1}h_{a-1}-(-1)^ah_{a-1}h_{b+1}.
\end{equation*}
The first and second terms on the right-hand side are now lexicographically greater than $h_ah_b$ and in non-increasing order, so it remains to express $h_{a-1}h_{b+1}$ as a linear combination of terms lexicographically higher.  To do so, apply the odd degree $h$-relation to $h_{a-1}h_{b+1}$, and then to the last term in that, and so forth until the left factor's subscript reaches one.  At this point apply $h_1h_{a+b-1}=2h_{a+b}-h_{a+b-1}h_1$, and we are done.  Going through the algorithm just described, $\sum_{i<j}(\lambda_i)\lambda_j$ minus signs are accrued (modulo $2$).  Combining this with the contribution of \eqref{eqn-psi3-sign}, the total (modulo $2$) number of minus signs is
\begin{equation*}
\sum_{i<j}\left(\lambda_i\lambda_j+(\lambda_i+1)\lambda_j\right)\equiv\binom{\lambda^T}{2}.
\end{equation*}\end{proof}

\vspace{0.07in}

The automorphism $\psi_1$ has infinite order, as
\begin{equation*}
\psi_1^m(h_2)=h_2-mh_1^2.
\end{equation*}
The anti-automorphism $\psi_1\psi_2\psi_3$ has infinite order as well, while $\psi_1\psi_2$ squares to the identity.

\begin{prop}\label{prop-psi432} Let $S=\psi_1\psi_2\psi_3$.  Then with the (co)multiplication and (co)unit already defined and with $S$ as antipode, $\sym$ has the structure of a non-involutory $\Z$-graded Hopf superalgebra, that is, a Hopf algebra object in the category $\Bbbk\text{-gmod}_{-1}$ with $S^2\neq1$.  The $\Z$-grading is compatible in the sense that the super-grading is just the mod 2 reduction of the $\Z$-grading.  We have
\begin{equation}\label{eqn-antipode-action}
\psi_1\psi_2(h_\lambda)=(-1)^{\binom{\lambda}{2}+|\lambda|}e_\lambda,	\qquad	\psi_1\psi_2(e_\lambda)=(-1)^{\binom{\lambda}{2}+|\lambda|}h_\lambda, \qquad
S(h_\lambda)=(-1)^{\binom{|\lambda|+1}{2}}e_{\lambda^\text{rev}}.
\end{equation}\end{prop}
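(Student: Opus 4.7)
The plan is to verify, in turn, the antipode axiom $\mu(S \otimes \id)\Delta = \eta\epsilon = \mu(\id \otimes S)\Delta$, the involution $S^2 = \id$, and the explicit formulas (\ref{eqn-antipode-action}). Compatibility of the $\Z$-grading with the super-grading is automatic, since each $\psi_i$ preserves the $\Z$-grading on generators and the super-grading is, by construction, its mod-$2$ reduction.

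For the antipode axiom, first I would note that $S = \psi_1\psi_2\psi_3$ is an algebra anti-homomorphism, since $\psi_1$ and $\psi_2$ are algebra homomorphisms and $\psi_3$ is an algebra anti-involution. Consequently the convolutions $S * \id$ and $\id * S$ are algebra homomorphisms $\sym \to \sym$, so it suffices to verify the axiom on the algebra generators $h_n$. Since $\psi_3$ acts trivially on a single generator, $S(h_n) = \psi_1\psi_2(h_n) = (-1)^{\ang{n}}e_n$, and
\begin{equation*}
\mu(S \otimes \id)\Delta(h_n) \;=\; \sum_{m=0}^n S(h_m)\,h_{n-m} \;=\; \sum_{m=0}^n (-1)^{\ang{m}}\,e_m h_{n-m}.
\end{equation*}
For $n \geq 1$ this vanishes by the defining equation (\ref{eqn-defn-e}) of $e_n$, and for $n=0$ it equals $1$. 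The computation for $\mu(\id \otimes S)\Delta$ is the same, using the equivalent form $\sum_m (-1)^{\ang{m}} h_{n-m} e_m = 0$ noted just after (\ref{eqn-defn-e}).

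For the explicit formulas (\ref{eqn-antipode-action}), direct computation suffices: applying $\psi_3$ to $h_\lambda = h_{\lambda_1}\cdots h_{\lambda_r}$ reverses the product to $h_{\lambda^{\text{rev}}}$, then $\psi_2$ multiplies by $\prod_i (-1)^{\ang{\lambda_i}} = (-1)^{\ang{\lambda}}$, and finally $\psi_1$ replaces each $h_{\lambda_i}$ by $e_{\lambda_i}$. The formulas for $\psi_1\psi_2(h_\lambda)$ and $\psi_1\psi_2(e_\lambda)$ are immediate (no reversal, since $\psi_3$ is not applied); the $S(e_\lambda)$ formula follows by symmetry via $\psi_1$.

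For $S^2 = \id$: since $\psi_3$ commutes with both $\psi_1$ and $\psi_2$ (an easy check on generators) and $\psi_3^2 = \id$, we get $S^2 = (\psi_1\psi_2)^2$. As $\psi_1\psi_2$ is an algebra automorphism, checking $(\psi_1\psi_2)^2 = \id$ on the generators $h_n$ reduces to showing $\psi_1\psi_2(e_n) = (-1)^{\ang{n}} h_n$. I would prove this by induction on $n$: apply the algebra homomorphism $\psi_1\psi_2$ to (\ref{eqn-defn-e}), use the identity $\ang{k}+\ang{n-k}+k(n-k) = \ang{n}$ to manipulate the resulting signs, substitute the inductive hypothesis, and match the outcome against the symmetric form of (\ref{eqn-defn-e}) that expresses $h_n$ as a signed sum involving the $e_k$'s. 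The main obstacle is this sign bookkeeping; once the three sign contributions $(-1)^{\ang{k}}$, $(-1)^{\ang{n-k}}$ and $(-1)^{k(n-k)}$ are correctly aligned, everything collapses onto the two forms of (\ref{eqn-defn-e}) already established in the paper.
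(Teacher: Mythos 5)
Your overall route is the one the paper takes: check the antipode identity using \eqref{eqn-defn-e}, read the closed formulas off the definitions of $\psi_1,\psi_2,\psi_3$, and prove $\psi_1\psi_2(e_n)=(-1)^{\ang{n}}h_n$ by induction to handle $S(e_\lambda)$ and $S^2=1$. The gap is in the reduction step. You claim that since $S$ is an algebra anti-homomorphism, the convolutions $S*\id$ and $\id*S$ are algebra homomorphisms, so checking the antipode axiom on the generators $h_n$ suffices. In the braided category $\Bbbk\text{-gmod}_{-1}$ this is not true for a \emph{plain} anti-homomorphism (which is what $\psi_1\psi_2\psi_3$ is, if $\psi_3$ reverses products with no signs, as in $\psi_3(h_2h_1)=h_1h_2$): the coproduct of a product carries the braiding sign, $\Delta(xy)=\sum(-1)^{\deg x_2\deg y_1}x_1y_1\otimes x_2y_2$, and the usual argument that the antipode identity propagates from generators to products only works if $S$ is anti-multiplicative in the braided sense, $S(xy)=(-1)^{\deg x\deg y}S(y)S(x)$, so that the two signs cancel against $\epsilon$. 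The failure is concrete and already occurs in degree $2$: $h_1^2=h_{11}$ is primitive (it is $p_2$), so $\mu(S\otimes\id)\Delta(h_{11})=S(h_{11})+h_{11}$, which forces $S(h_{11})=-h_{11}$ for any antipode; but the plain anti-homomorphism extension of $S(h_1)=-h_1$ gives $S(h_{11})=S(h_1)S(h_1)=+h_{11}$, so $(S*\id)(h_{11})=2h_{11}\neq\bigl((S*\id)(h_1)\bigr)^2=0$. Thus with the plain reading, the identity you verify on $h_n$ genuinely does not extend, and the convolution is not multiplicative.

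The fix is to commit to the super convention: take $\psi_3$, and hence $S$, to be a superalgebra anti-involution (Koszul signs included). Then your subalgebra/convolution argument is valid and the generator computation $\sum_m(-1)^{\ang{m}}e_mh_{n-m}=0=\sum_m(-1)^{\ang{m}}h_{n-m}e_m$ does complete the antipode axiom; your inductive proof of $\psi_1\psi_2(e_n)=(-1)^{\ang{n}}h_n$ and the identity $S^2=(\psi_1\psi_2)^2=\id$ survive unchanged, since the reversal signs cancel in $S^2$. The price is in the closed formula: super-reversing $h_{\lambda_1}\cdots h_{\lambda_r}$ produces an extra factor $(-1)^{\sum_{i<j}\lambda_i\lambda_j}$, so one gets $S(h_\lambda)=(-1)^{\ang{\lambda}+\sum_{i<j}\lambda_i\lambda_j}e_{\lambda^{\text{rev}}}$ rather than the sign you (and display \eqref{eqn-antipode-action}) record -- already for $\lambda=(1,1)$ the difference is visible, since the antipode must send $h_{11}\mapsto-h_{11}$. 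Note that the same multiplicativity issue underlies the evaluation of $m\circ(S\otimes1)\circ\Delta$ on $h_\lambda$ as a product over the parts $\lambda_i$ in the proof given in the text, so your strategy is not wrong in spirit; but as written, the "anti-homomorphism implies convolution is an algebra map, so generators suffice" step is exactly where the Koszul signs must be confronted, and your proposal does not do so.
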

\indent In the setting of ordinary vector spaces, if a bialgebra admits a Hopf antipode then this antipode is unique.  The same is true in more general settings including ours; see \cite{Ma}.  
\begin{proof} Letting $\eta$ be the unit and $\epsilon$ the counit, $(\eta\circ\epsilon)(h_\lambda)=\delta_{\lambda,(0)}$.  On a single $h_n$,
\begin{equation*}\begin{split}
(m\circ(S\otimes1)\circ\Delta)(h_n)&=(m\circ(\psi_1\psi_2\psi_3\otimes1))\left(\sum_{k=0}^nh_k\otimes h_{n-k}\right)\\
&=m\left(\sum_{k=0}^n(-1)^{\binom{k+1}{2}}e_k\otimes h_{n-k}\right)\\
&=\delta_{n,0}.
\end{split}\end{equation*}
If $x,y$ are homogeneous, then using Sweedler notation,
\begin{equation*}\begin{split}
(m\circ(S\otimes1)\circ\Delta)(xy)&=m\circ(S\otimes1)\left(\sum_{(x),(y)}(-1)^{\deg(x_{(2)})\deg(y_{(1)})}x_{(1)}y_{(1)}\otimes x_{(2)}y_{(2)}\right)\\
&=\sum_{(y)}(-1)^{\deg(x)\deg(y_{(1)})}S(y_{(1)})\left(\sum_{(x)}S(x_{(1)})x_{(2)}\right)y_{(2)}.
\end{split}\end{equation*}
Letting $x,y$ each be of the form $h_\lambda$, it follows by induction on $\ell(\lambda)$ that $(m\circ(S\otimes1)\circ\Delta)(h_\lambda)=\delta_{\lambda,(0)}$.  So $S$ is the Hopf antipode.

The expressions for $\psi_1\psi_2(h_\lambda)$ and $S(h_\lambda)$ in equation \eqref{eqn-antipode-action} are immediate from the definitions of $\psi_1,\psi_2,\psi_3$ and the above calculation.  In order to compute $\psi_1\psi_2(e_n)$, we proceed by induction (the $n=1$ case is clear).  Applying $\psi_3$ to equation \eqref{eqn-defn-e}, we have
\begin{equation}\label{eqn-dual-to-h-defn}
\sum_{k=0}^n(-1)^{\binom{k+1}{2}}h_{n-k}e_k=0.
\end{equation}
Hence
\begin{eqnarray}
(-1)^{\binom{n+1}{2}}\psi_1\psi_2(e_n)&\refequal{\eqref{eqn-defn-e}}&-\sum_{k=0}^{n-1}(-1)^{\binom{k+1}{2}}\psi_1\psi_2(e_kh_{n-k})\\
&=&-\sum_{k=0}^{n-1}(-1)^{\binom{n-k+1}{2}}h_ke_{n-k}\\
&\refequal{\eqref{eqn-dual-to-h-defn}}&h_n.
\end{eqnarray}
The second equality is by the inductive hypothesis.  Since $\psi_1\psi_2$ is a homomorphism, this immediately generalizes to prove the expression for $\psi_1\psi_2(e_\lambda)$ in equation \eqref{eqn-antipode-action}.
\end{proof}

\begin{cor} We have $\psi_2\psi_1\psi_2=\psi_1^{-1}$ and $\psi_1\psi_2\psi_1=\psi_2$.\end{cor}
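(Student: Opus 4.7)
The plan is to derive both identities simultaneously from the single relation $(\psi_1\psi_2)^2 = \mathrm{id}$, which in turn will follow formally from the involutivity of $S = \psi_1\psi_2\psi_3$ established in Proposition \ref{prop-psi432}.

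First I would record that $\psi_3$ commutes with both $\psi_1$ and $\psi_2$; this is noted in the paragraph just before the proposition and is easily checked on the generator $h_n$ (since $\psi_3$ fixes $h_n$ while $\psi_1,\psi_2$ act diagonally in an obvious way on the generators, then one extends using that $\psi_3$ is an anti-automorphism and $\psi_1,\psi_2$ are automorphisms). Using these commutations and the fact that $\psi_3^2 = \mathrm{id}$, I would compute
\begin{equation*}
\mathrm{id} = S^2 = \psi_1\psi_2\psi_3\,\psi_1\psi_2\psi_3 = \psi_1\psi_2\,\psi_1\psi_2\,\psi_3^2 = (\psi_1\psi_2)^2,
\end{equation*}
where the middle equality just shuffles the two $\psi_3$'s to the right past $\psi_1$ and $\psi_2$.

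From $(\psi_1\psi_2)^2 = \mathrm{id}$, i.e.\ $\psi_1\psi_2\psi_1\psi_2 = \mathrm{id}$, the two asserted identities are pure group-theoretic manipulations: left-multiplying by $\psi_1^{-1}$ gives $\psi_2\psi_1\psi_2 = \psi_1^{-1}$, and right-multiplying by $\psi_2$ (noting $\psi_2^2 = \mathrm{id}$) gives $\psi_1\psi_2\psi_1 = \psi_2$.

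The only subtlety, and hence the main place I would be careful, is the commutation of $\psi_3$ with $\psi_1,\psi_2$ as set maps. Because $\psi_3$ is an anti-homomorphism this commutation is not automatic from definitions on generators; I would verify it by observing that the three maps are already defined on the free associative algebra $\sym'$ and that their commutation can be checked on each $h_n$ (where all three are essentially diagonal), then extended to words $h_{\alpha}$ using anti-/homomorphism properties. Once this is in hand, the rest is the two-line algebraic consequence sketched above.
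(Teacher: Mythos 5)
Your proof is correct and is essentially the paper's: the corollary is just the group-theoretic consequence of $(\psi_1\psi_2)^2=\mathrm{id}$, which the paper treats as immediate from Proposition \ref{prop-psi432}. The only caveat is that one can read $(\psi_1\psi_2)^2=\mathrm{id}$ directly off equation \eqref{eqn-antipode-action} (since $\psi_1\psi_2$ exchanges $h_\lambda$ and $e_\lambda$ up to the sign $(-1)^{\ang{\lambda}}$), which bypasses the commutation of $\psi_3$ with $\psi_1$ --- the one step whose justification you state a bit loosely, as $\psi_1$ is not diagonal on the generators and that commutation really rests on the fact $\psi_3(e_n)=e_n$ established earlier in the paper.
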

\begin{proof} Immediate from the preceding proposition.\end{proof}

\noindent Let $\text{SAut}(\sym)$ be the $\Z/2$-graded group of algebra automorphisms and anti-automorphisms of $\sym$.  It follows from the above that the subgroup of $\text{SAut}(\sym)$ generated by $\psi_1,\psi_2$ is isomorphic to $\Z/2*\Z/2$.

%
\subsection{Relation to quantum quasi-symmetric functions}\label{subsec-comparison-qsymq}
%

The ring $Q\sym_q$ of \textit{quantum quasi-symmetric functions}, introduced in \cite{ThibonUng}, is a noncommutative deformation of Gessel's quasi-symmetric functions \cite{Gessel} whose definition uses Rosso's quantum shuffle product \cite{Rosso}.  There is a basis of $Q\sym_q$ known as the basis of \textit{ribbon Schur functions}.  In degree $n$, the ribbon Schur functions are indexed by compositions $\alpha$ of $n$ and are denoted $R_\alpha$.  

We now recall some combinatorial notions.  For a permutation $\sigma\in S_n$, we say a number $k\in\lbrace1,2,\ldots,n-1\rbrace$ is a \textit{descent} if $\sigma(k)>\sigma(k+1)$.  In terms of the strands diagrams of Subsection \ref{subsec-q-vector-spaces}, $k$ is a descent of $\sigma$ if and only if the $k$-th and $(k+1)$-st strands cross in a generic diagram for $\sigma$.  The associated \textit{descent composition} $C(\sigma)=(i_1,\ldots,i_r)$ is the composition of $n$ such that the set of descents of $\sigma$ is $\lbrace i_1,i_1+i_2,\ldots,i_1+\ldots+i_{r-1}\rbrace$.  In other words, the first $i_1$ strands do not cross, the next $i_2$ strands do not cross, and so forth; and strands $i_1$ and $i_1+1$ must cross, strands $i_1+i_2$ and $i_1+i_2+1$ must cross, and so forth.  The bilinear form on $Q\sym_q$ is given by
\begin{equation}\label{eqn-ribbon-schur-bilinear-form}
(R_\beta,R_\alpha)=\sum_{\substack{C(\sigma)=\alpha\\C(\sigma^{-1})=\beta}}q^{\ell(\sigma)}
\end{equation}
(sum over $\sigma\in S_n$, where $n=|\alpha|=|\beta|$).  This formula appears as equation (39) of \cite{ThibonUng} and as an unlabelled equation near the end of Section 10.15 of \cite{AguiarMahajan}.  In the latter reference, $Q\sym_q$ is defined in a more abstract manner, as the graded $q$-Hopf algebra associated via the bosonic Fock functor to the species of linear set compositions.  The dual species gives rise to a $q$-Hopf algebra $N\sym_q$ which is graded dual to $Q\sym_q$.  For $q$ not an algebraic integer, $Q\sym_q$ and $N\sym_q$ are isomorphic \cite[Proposition 12.38]{AguiarMahajan}.

Define elements $\widetilde{h}_\alpha$ of $\sym'$ by
\begin{equation}\label{eqn-h-tilde}
\widetilde{h}_\alpha=\sum_{\beta\leq\alpha}(-1)^{\ell(\alpha)-\ell(\beta)}h_\beta.
\end{equation}
In the above, for two compositions $\alpha,\beta$ of $n$, we say $\beta\leq\alpha$ if $\alpha$ refines $\beta$.  Note that the change of basis $h_\alpha\mapsto\widetilde{h}_\alpha$ is upper-triangular and unimodular.  Observe that, by equations \eqref{eqn-defn-e-alt} and \eqref{eqn-h-tilde},
\begin{equation}
e_n=(-1)^{\binom{n}{2}}\widetilde{h}_{(1^n)}.
\end{equation}
If we denote by $\lbrace\widehat{R}_\alpha\rbrace$ the basis of $N\sym_q$ dual to the basis $\lbrace R_\alpha\rbrace$ of $Q\sym_q$, is it easy to see that
\begin{equation*}\begin{split}
\sym'&\to N\sym_q\\
\widetilde{h}_\alpha&\mapsto\widehat{R}_\alpha
\end{split}\end{equation*}
defines an isomorphism between $\sym'$ and $N\sym_q$, since
\begin{equation}\label{eqn-h-tilde-bilinear-form}
(\widetilde{h}_\beta,\widetilde{h}_\alpha)=\sum_{\substack{C(\sigma)=\alpha\\C(\sigma^{-1})=\beta}}q^{\ell(\sigma)}.
\end{equation}

We can give a diagrammatic interpretation of equation \eqref{eqn-h-tilde-bilinear-form}.  Setting up platforms at top and bottom to compute $(\widetilde{h}_\beta,\widetilde{h}_\alpha)$ just as one would to compute $(h_\beta,h_\alpha)$, the following extra restriction is placed on diagrams: \textit{strands which start or end at adjacent positions not on the same platform must cross}.  The derivation of \eqref{eqn-h-tilde-bilinear-form} from \eqref{eqn-odd-bilform} is an easy exercise using inclusion-exclusion.

\begin{example} The computation of
\begin{equation*}
(h_{31},h_{22})=1+q^2
\end{equation*}
involves two admissible diagrams.  Only one of these diagrams, however, contributes to
\begin{equation*}
(\widetilde{h}_{31},\widetilde{h}_{22})=q^2.
\end{equation*}
\end{example}

%
\section{Other bases of $\sym$}
%

%
\subsection{Dual bases: odd monomial and forgotten symmetric functions}\label{subsec-dual-bases}
%

In the even ($q=1$) case, the dual bases to the elementary and complete symmetric functions are the forgotten and the monomial symmetric functions, respectively.  The monomial functions $\lbrace m_\lambda\rbrace$ get their name from the fact that when $\sym$ is viewed in terms of power series, they are sums of monomials of the same shape, 
\begin{equation}\label{eqn-m-power-series}
m_\lambda=\sum_\alpha x^\alpha.
\end{equation}
Here, $\lambda=(\lambda_1,\ldots,\lambda_r,0,\ldots)$ is a partition padded with infinitely many zeroes at the end, the sum ranges over all \textit{distinct} permutations $\alpha$ of $\lambda$, and $x^\alpha=x_1^{\alpha_1}x_2^{\alpha_2}\cdots$.  In terms of power series, no particularly nice description of the forgotten symmetric functions is known.  As a result they are often omitted from the discussion; whence their name.  From the point of view of self-adjoint Hopf algebras with a bilinear form (in the sense of \cite{ZelFinite}), however, they are just as natural a consideration as the monomial functions; see \cite[I.2]{Mac}.

\vspace{0.07in}

We now return to the odd ($q=-1$) case.  For $n\geq0$, define the \textit{odd monomial symmetric functions} $\lbrace m_\lambda\rbrace_{\lambda\vdash n}$ to be the dual basis to $\lbrace h_\lambda\rbrace_{\lambda\vdash n}$ and define the \textit{odd forgotten symmetric functions} $\lbrace f_\lambda\rbrace_{\lambda\vdash n}$ to be the dual basis to $\lbrace e_\lambda\rbrace_{\lambda\vdash n}$.  In other words, we define $m_\lambda$ and $f_\lambda$ by the conditions
\begin{equation*}
(h_\lambda,m_\mu)=\delta_{\lambda\mu},\qquad(e_\lambda,f_\mu)=\delta_{\lambda\mu}.
\end{equation*}
The monomial and forgotten functions through degree 4 are given in Subsection \ref{subsec-data-bases} of the Appendix.

\vspace{0.07in}

Define the coefficients $M_{\lambda\mu},M'_{\lambda\mu},M''_{\lambda\mu}$ (indexed over ordered pairs of partitions $\lambda,\mu$ of some $n$) by
\begin{equation}\label{eqn-M-defn}
M_{\lambda\mu}=(e_\lambda,h_\mu),\qquad M'_{\lambda\mu}=(h_\lambda,h_\mu),\qquad M''_{\lambda\mu}=(e_\lambda,e_\mu).
\end{equation}
The following change of basis relations are immediate consequences of \eqref{eqn-M-defn}:
\begin{equation}\begin{split}\label{eqn-hemf}
h_\lambda=\sum_{\mu\vdash n}M_{\lambda\mu}f_\mu,		\qquad	&h_\lambda=\sum_{\mu\vdash n}M'_{\lambda\mu}m_\mu,\\
e_\lambda=\sum_{\mu\vdash n}M_{\lambda\mu}m_\mu,	\qquad	&e_\lambda=\sum_{\mu\vdash n}M''_{\lambda\mu}f_\mu.
\end{split}\end{equation}
Along with the results of Subsection \ref{subsec-e-h}, we see that these change of basis matrices have the properties:
\begin{itemize}
\item $M_{\lambda\mu}$ is equal to 0 when $\mu>\lambda^T$ in the lexicographic order and equal to $\pm1$ when $\mu=\lambda^T$.  So the change of basis matrix is upper-left-triangular with $\pm1$'s on the diagonal.
\item The matrix for $M'_{\lambda\mu}$ is symmetric and has determinant equal to $\pm1$.
\item The matrix for $M''_{\lambda\mu}$ is symmetric and has determinant equal to $\pm1$.
\end{itemize}
Since $(e_\lambda,e_\mu)=(h_\lambda,h_\mu)$ when $q=1$, the $q=1$ analogues of $M'_{\lambda\mu}$ and $M''_{\lambda\mu}$ are equal.  Their combinatorial interpretations in Proposition \ref{prop-dual-basis-combinatorics} below are the same when the signs are omitted.  In the odd ($q=-1$) case, they differ because $(e_k,e_k)=(-1)^{\binom{k}{2}}$, as this sign comes up whenever $k$ strands connect the same two black platforms.

\vspace{0.07in}

The determinant of the matrix $M$ is not hard to compute.  $M$ is upper-left-triangular by Proposition \ref{prop-e-h-semi-orthogonality}, and the anti-diagonal entry $(h_\lambda,e_{\lambda^T})$ equals $(-1)^{\ell(w_\lambda)}$.  But this entry and the anti-diagonal entry $(h_{\lambda^T},e_\lambda)$ are equal, so the determinant of $M$ in degree $n$ is a sign computed only from self-transpose diagrams:
\begin{equation}
\det(M_n)=\prod_{\lambda=\lambda^T}(-1)^{\ell(w_{\lambda})}.
\end{equation}
The determinants $\det(M'_n)$ and $\det(M''_n)$ both equal $\det(M_n)$ times the determinant of the change of basis between the $e$- and $h$-bases.  Note that self-transpose Young diagrams with $n$ boxes are in a natural bijection with partitions of $n$ into distinct odd positive integers.  Under this bijection, the sign $(-1)^{\ell(w_\lambda)}$ has a factor of $-1$ for each summand which is congruent to 3 modulo 4.

\vspace{0.07in}

The proof of the following proposition, which we omit, is essentially the same as in the even ($q=1$) case, but with the extra bookkeeping of signs.  For the even case; see Proposition 37.5 of \cite{Bump}.  For a matrix $A$, define the composition $\mathrm{row}(A)$ (respectively $\mathrm{col}(A)$) to consist of the row (respectively column) sums of $A$.  If $A$ is an $\N$-matrix (that is, its entries are all natural numbers), there are two sorts of signs we can attach to $A$ when counting matrices.  Our natural numbers include zero: $\N=\lbrace0,1,2,\ldots\rbrace$.
\begin{itemize}
\item To count \textit{SW-NE pairs} means to accrue a sign of $(-1)^{ab}$ for every pair of entries in which an $a$ is strictly below and strictly to the left of a $b$.
\item To count \textit{cables} means to accrue a sign of $(-1)^{\binom{a}{2}}$ for every entry $a$.  Since 0- and 1-cables accrue $1$'s, this is not interesting for $\lbrace0,1\rbrace$-matrices.
\end{itemize}
\begin{prop}\label{prop-dual-basis-combinatorics} The numbers defined in equation \eqref{eqn-M-defn} have the following combinatorial interpretations:
\begin{enumerate}
\item $M_{\lambda\mu}$ equals the signed count of $\lbrace0,1\rbrace$-matrices $A$ with $\mathrm{row}(A)=\lambda$ and $\mathrm{col}(A)=\mu$.  The sign counts SW-NE pairs.
\item $M'_{\lambda\mu}$ equals the signed count of $\N$-matrices $A$ with $\mathrm{row}(A)=\lambda$ and $\mathrm{col}(A)=\mu$.  The sign counts SW-NE pairs.
\item $M''_{\lambda\mu}$ equals the signed count of $\N$-matrices $A$ with $\mathrm{row}(A)=\lambda$ and $\mathrm{col}(A)=\mu$.  The sign counts SW-NE pairs and cables.
\end{enumerate}\end{prop}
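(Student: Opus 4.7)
The plan is to translate the graphical formula for the bilinear form from the preceding proposition into a sign-counting formula over matrices, by interpreting each diagram $D \in \text{diag}(\beta,\alpha,\eta,\epsilon)$ as recording how many strands join each pair of platforms.

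First, I would establish the basic bijection. Given such a diagram $D$, define a matrix $A = A(D)$ whose $(i,j)$-entry is the number of strands joining the $i$-th top platform (of width $\beta_i$) to the $j$-th bottom platform (of width $\alpha_j$). Then $\row(A) = \beta$ and $\col(A) = \alpha$. Because $D$ is a minimal double coset representative, strands sharing a platform never cross, so the matrix $A$ together with the row and column sums completely determines the permutation $\sigma$ and hence $D$. Conversely, any $\N$-matrix $A$ with the prescribed row and column sums is realized by a unique such diagram. Moreover, the lite condition (at most one strand between a black and a white platform) translates exactly to the constraint that $A_{ij} \in \{0,1\}$ whenever the $i$-th top and $j$-th bottom platforms have opposite colors.

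Next, I would translate the two sign factors from the preceding proposition into matrix language. Two strands from distinct entries $(i_1,j_1)$ and $(i_2,j_2)$ cross in the minimal representative if and only if the two positions form a strictly SW-NE pair in $A$, and each such pair contributes $A_{i_1,j_1}\cdot A_{i_2,j_2}$ crossings. Summing contributions yields
\begin{equation*}
(-1)^{\ell(\sigma)} \ = \ \prod_{\text{SW-NE pairs}} (-1)^{A_{\mathrm{SW}} A_{\mathrm{NE}}},
\end{equation*}
which is precisely the SW-NE pair sign of $A$ (strands sharing an entry do not cross in the minimal representative, so entries contribute nothing further to this factor). For the cable sign, each pair of black platforms connected by $k$ strands contributes $(-1)^{\ang{k-1}}$; in matrix language, this is the factor $(-1)^{\ang{A_{ij}-1}}$ attached to each entry $A_{ij}$ at a position whose corresponding top and bottom platforms are both black.

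Finally, I would deduce the three statements by specializing the platform colors. For $M_{\lambda\mu} = (e_\lambda,h_\mu)$, the top platforms are all black and the bottom all white; the lite condition forces $A$ to be a $\{0,1\}$-matrix and there are no black-black top-bottom pairs, so only the SW-NE pair sign contributes. For $M'_{\lambda\mu} = (h_\lambda,h_\mu)$, all platforms are white, so $A$ ranges over $\N$-matrices with no cable contribution. For $M''_{\lambda\mu} = (e_\lambda,e_\mu)$, all platforms are black, so $A$ again ranges over $\N$-matrices but now every entry contributes an additional cable sign on top of the SW-NE pair sign. There is no serious obstacle; the only delicate point is keeping the geometric orientations straight long enough to confirm that the crossing count of a minimal diagram equals the SW-NE pair count of the associated matrix, and that the color restrictions on platforms translate correctly into the claimed matrix conditions.
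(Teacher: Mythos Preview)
Your proposal is correct and is precisely the argument the paper has in mind: the authors omit the proof entirely, remarking only that it is the same as in the even case (citing Bump) with extra sign bookkeeping, and your write-up supplies exactly that bookkeeping via the preceding diagrammatic proposition. The one point worth stating a bit more explicitly is the standard bijection between $S_\beta\backslash S_n/S_\alpha$ and $\N$-matrices with row sums $\beta$ and column sums $\alpha$, but you clearly have this in hand.
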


\begin{example} We compute the $(3,2),(2,2,1)$ entry of the matrices $M,M',M''$.  Below are the five $\N$-matrices with row sum $(3,2)$ and column sum $(2,2,1)$, and their contributions to $M,M',M''$.
\begin{center}\begin{tabular}{| c | c c c |}
\hline
matrix								&	contribution to $M$		&	contribution to $M'$		&	contribution to $M''$						\\
\hline
$\begin{pmatrix}2&1&0\\0&1&1\end{pmatrix}$	&	0		&	$(-1)^0$	&	$(-1)^0(-1)^{\binom{3}{2}}$			\\
$\begin{pmatrix}2&0&1\\0&2&0\end{pmatrix}$	&	0		&	$(-1)^2$	&	$(-1)^2(-1)^{\binom{3}{2}+\binom{3}{2}}$	\\
$\begin{pmatrix}1&2&0\\1&0&1\end{pmatrix}$	&	0		&	$(-1)^2$	&	$(-1)^2(-1)^{\binom{3}{2}}$			\\
$\begin{pmatrix}1&1&1\\1&1&0\end{pmatrix}$	&	$(-1)^3$	&	$(-1)^3$	&	$(-1)^3(-1)^0$				\\
$\begin{pmatrix}0&2&1\\2&0&0\end{pmatrix}$	&	0		&	$(-1)^6$	&	$(-1)^6(-1)^{\binom{3}{2}+\binom{3}{2}}$	\\
\hline
\end{tabular}\end{center}
Therefore
\begin{equation*}
M_{(3,2),(2,2,1)}=-1,\qquad	M_{(3,2),(2,2,1)}'=3, \qquad	M_{(3,2),(2,2,1)}''=-1.
\end{equation*}
\end{example}

\vspace{0.07in}

We end this section by pointing out that the above results are enough to compute the matrix of the bilinear form in any of the bases described so far.  For instance, since $M'$ is the matrix of the bilinear form in the $h$-basis, $M$ is the matrix which takes the $f$-basis to the $h$-basis, and $M=M^T$, the matrix $M^{-1}M'M^{-1}$ is the matrix of the bilinear form in the $f$-basis.

%
\subsection{Primitive elements, odd power symmetric functions, and the center of $\sym$}\label{subsec-primitives}
%

For this section assume $\Bbbk$ is a field of characteristic zero.

\vspace{0.07in}

Recall that an element $x$ of a ($q$-)Hopf algebra is called \textit{primitive} if
\begin{equation*}
\Delta(x)=1\otimes x+x\otimes 1.
\end{equation*}
In the even ($q=1$) case, the primitive elements of $\sym$ are spanned by the power sum functions
\begin{equation*}
p_n=\sum_jx_j^n.
\end{equation*}
In the odd setting, however, there are only ``half'' as many.
\begin{prop} The subspace of primitive elements $P$ in $\sym$ is spanned by the elements $m_1$ and $m_{2k}$ for $k\geq1$.\end{prop}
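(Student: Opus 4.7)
The plan is to use the characterization $P = (I^2)^\perp$ recalled in Subsection \ref{subsec-primitives}, where $I \subset \sym$ is the augmentation ideal. Nondegeneracy of the bilinear form on each $\sym_n$ gives $\dim P_n = \dim(\sym_n/I^2_n)$, so the problem reduces to describing $I^2$. Because $\sym$ is generated as an algebra by the $h_k$ (Corollary \ref{cor-defining-relations}), every $h_\lambda$ with $\ell(\lambda) \geq 2$ already lies in $I^2$; hence $(I/I^2)_n$ is at most one-dimensional, spanned by the class of $h_n$, and the whole problem becomes deciding whether $h_n$ itself lies in $I^2$.

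For $n$ odd with $n \geq 3$, the $a = n-1$, $b = 1$ instance of relation \eqref{eqn-e-relation-2} gives $h_1 h_{n-1} + h_{n-1} h_1 = 2 h_n$. Working in characteristic zero, this places $h_n \in I^2$ and so $P_n = 0$ in those degrees.

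The heart of the proof is to show $h_n \notin I^2$ for $n = 1$ and for even $n \geq 2$. I would do this via a test homomorphism
\begin{equation*}
\phi \colon \sym \longrightarrow R := \Bbbk[y_1, y_2, y_4, y_6, \dots],
\end{equation*}
where $R$ is the commutative polynomial ring on generators indexed by $\{1\} \cup 2\Z_{\geq 1}$, given by $\phi(h_1) = y_1$, $\phi(h_{2k}) = y_{2k}$, and $\phi(h_{2k+1}) = y_1 y_{2k}$ for $k \geq 1$. To check well-definedness one verifies relations \eqref{eqn-e-relation-1} and \eqref{eqn-e-relation-2}. The first holds trivially since $R$ is commutative. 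For the second, both sides vanish in $R$ whenever $a$ is odd; when $a$ is even the identity reduces to $\phi(h_a) \phi(h_b) = \phi(h_{a+1}) \phi(h_{b-1})$, which follows at once from the built-in relation $\phi(h_{\mathrm{odd}}) = y_1 \cdot \phi(h_{\mathrm{odd}-1})$ and commutativity of $R$. Setting $\mathfrak{m} = (y_1, y_2, y_4, \dots)$, we have $\phi(I^2) \subseteq \mathfrak{m}^2$, while $\phi(h_n) = y_n \notin \mathfrak{m}^2$ whenever $n = 1$ or $n$ is even, forcing $h_n \notin I^2$.

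Hence for $n = 1$ or $n$ even, $I^2_n = \mathrm{span}\{h_\lambda : \ell(\lambda) \geq 2\}$ has codimension one in $\sym_n$; by the defining property $(m_n, h_\mu) = \delta_{\mu,(n)}$ of the odd monomial symmetric function, its orthogonal complement is exactly $\Bbbk \cdot m_n$. The main obstacle is the construction and well-definedness verification of $\phi$, which encodes the fact that the only ``length-reducing'' consequence of the $h$-relations is the collapse $h_1 h_{2k} + h_{2k} h_1 = 2 h_{2k+1}$, producing only odd generators from even ones and $h_1$; everything else is formal.
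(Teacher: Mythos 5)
Your proof is correct and follows essentially the same route as the paper: both use the characterization $P=(I^2)^\perp$ and then compute $I^2\cap\sym_n$ degree by degree, with $h_n\in I^2$ in odd degrees $\geq 3$ via $h_1h_{n-1}+h_{n-1}h_1=2h_n$ and the perpendicular space identified through the dual basis property of $m_n$. The only difference is that where the paper declares the computation of $I^2\cap\sym_n$ ``clear from the $h$- and $e$-relations,'' you supply an explicit (and correct) justification that $h_n\notin I^2$ for $n=1$ or $n$ even via the test homomorphism $\phi$ to a commutative polynomial ring, which is a nice way of making that step rigorous.
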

\begin{proof} Let $I\subset\sym$ be the ideal generated by all elements of positive degree.  By the general theory of self-adjoint connected graded Hopf algebras with a bilinear form, $P=(I^2)^\perp$ (Lemma 1.7 of \cite{ZelFinite}).  It is clear from the $h$-relations (Propositon \ref{prop-e-relations}) or the $e$-relations (Proposition \ref{prop-h-relations}) that in each degree $n$,
\begin{equation*}
I^2\cap\sym_n=\begin{cases}0&n=1,\\
\text{span}_\Bbbk\lbrace h_\lambda:\lambda\neq(n)\rbrace&n\text{ is even},\\
\sym_n&n\text{ is odd and}\geq3.\end{cases}
\end{equation*}
The result follows.\end{proof}

Note that $f_n=\pm m_n$, so the $f_{2k}$ are primitive as well (the sign is the same as the coefficient of $h_n$ in the expansion of $e_n$ in the $h$-basis).  We define, therefore, the $n$-th \textit{odd power symmetric function} to be 
\begin{equation*}
p_n=m_n.
\end{equation*}
The first few $p_n$ are:
\begin{equation*}\begin{split}
&p_1=h_1,\\
&p_2=h_{11},\\
&p_3=h_{111}+h_{21}-h_3,\\
&p_4=-h_{1111}-2h_{22}+4h_4,\\
&p_5=h_{11111}+h_{2111}+3h_{221}-h_{311}-3h_{32}-9h_{41}+9h_5,\\
&p_6=h_{111111}+3h_{2211}-3h_{33}-6h_{411}+6h_{51}.
\end{split}\end{equation*}

\vspace{0.07in}

\begin{prop} The element $p_k$ belongs to the center of $\sym$ if and only if $k$ is even.\end{prop}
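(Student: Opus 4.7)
The plan is to convert centrality of $p_k$ into a symmetry statement about the coproduct via adjointness, and then verify it on the basis $\{h_\mu\}$. For any $x\in\sym$, I would introduce the operators $\tau_x,\tau_x^R\colon\sym\to\sym$ defined by
\begin{equation*}
\tau_x(z)=\sum_{(z)}(x,z_{(1)})\,z_{(2)},\qquad \tau_x^R(z)=\sum_{(z)}(x,z_{(2)})\,z_{(1)}.
\end{equation*}
Adjointness of multiplication and comultiplication gives $(xy,z)=(y,\tau_x(z))$ and $(yx,z)=(y,\tau_x^R(z))$; nondegeneracy of the bilinear form then reduces centrality of $x$ to the identity $\tau_x=\tau_x^R$.

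For the ``if'' direction, take $x=p_k=m_k$ with $k$ even and exploit that $p_k$ is primitive (by the preceding proposition). Primitivity combined with adjointness forces $(p_k,uv)=\epsilon(u)(p_k,v)+(p_k,u)\epsilon(v)$, so by induction $(p_k,h_{j_1}\cdots h_{j_r})$ vanishes unless exactly one $j_i$ equals $k$ and the others are $0$. Expanding $\Delta(h_\mu)=\Delta(h_{\mu_1})\cdots\Delta(h_{\mu_r})$ as a braided product
\begin{equation*}
\Delta(h_\mu)=\sum_{j_1,\ldots,j_r}(-1)^{S(\mu,j)}h_{j_1}\cdots h_{j_r}\otimes h_{\mu_1-j_1}\cdots h_{\mu_r-j_r},\qquad S(\mu,j)=\sum_{i<\ell}(\mu_i-j_i)\,j_\ell,
\end{equation*}
restricts the contributions to $\tau_{p_k}(h_\mu)$ to the terms in which some single $j_{i_0}=k$ (with $\mu_{i_0}\geq k$); the corresponding braiding sign is $(-1)^{k(\mu_1+\cdots+\mu_{i_0-1})}$, which is $+1$ for $k$ even. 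The mirror computation for $\tau_{p_k}^R(h_\mu)$ isolates terms with some single $\mu_{i_0}-j_{i_0}=k$ and yields sign $(-1)^{k(\mu_{i_0+1}+\cdots+\mu_r)}$, again $+1$ for $k$ even. Both expressions collapse to
\begin{equation*}
\sum_{\substack{1\leq i_0\leq r\\\mu_{i_0}\geq k}}h_{\mu_1}\cdots h_{\mu_{i_0}-k}\cdots h_{\mu_r},
\end{equation*}
and centrality of $p_k$ follows.

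For the ``only if'' direction, I would exhibit an explicit nonzero commutator for each odd $k$. When $k\geq 3$ is odd, the odd-degree $h$-relation of Proposition \ref{prop-e-relations} specialized to $a=k-1,\ b=1$ gives $h_1h_{k-1}+h_{k-1}h_1=2h_k$, so $(p_k,h_1h_{k-1})=2$. A short direct computation using $\Delta(h_1h_k)=\Delta(h_1)\Delta(h_k)$, in which the braiding signs $(-1)^k=-1$ no longer cancel, yields $\tau_{p_k}(h_1h_k)=h_1$ and $\tau_{p_k}^R(h_1h_k)=-h_1$, hence $(p_kh_1-h_1p_k,\,h_1h_k)=2\neq 0$. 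For $k=1$ the analogous calculation with $z=h_1h_2$ and $n=2$ gives $(h_1h_2-h_2h_1,\,h_1h_2)=-2\neq 0$, handling the base case. The main technical obstacle is the sign accounting in the iterated braided coproduct, but this is tractable precisely because every relevant sign has the form $(-1)^{k\cdot m}$ for some integer $m$, so they all collapse exactly when $k$ is even.
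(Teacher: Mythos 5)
Your proposal is correct, and all the delicate points check out: the reduction of centrality to $\tau_x=\tau_x^R$ via adjointness and nondegeneracy is valid, the braided-coproduct sign $S(\mu,j)=\sum_{i<\ell}(\mu_i-j_i)j_\ell$ agrees with the multiplication rule in $\sym'^{\otimes2}$, the primitivity of $p_k$ for $k$ even does force $(p_k,h_{j_1}\cdots h_{j_r})$ to vanish unless a single $j_{i_0}$ equals $k$, and the identity $h_1h_{k-1}+h_{k-1}h_1=2h_k$ (valid since $k-1$ is even) gives $(p_k,h_1h_{k-1})=2$, which is exactly what makes $\tau_{p_k}(h_1h_k)=h_1$ while $\tau_{p_k}^R(h_1h_k)=-h_1$ for odd $k\geq3$; the $k=1$ case via $(h_1h_2-h_2h_1,h_1h_2)=-2$ is also right. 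Your route differs from the paper's in a genuine way. The paper never invokes primitivity in this proof: it tests the commutators $p_kh_m-h_mp_k$ directly against the elementary basis, observing that in $\Delta(e_\lambda)$ only the shapes $\lambda=(k+1,1^{m-1})$ and $(k,1^m)$ pair nontrivially with $p_k\otimes h_m$ or $h_m\otimes p_k$, and reading off the signs $(-1)^{k(m-1)}$ and $(-1)^{km}$; this single computation settles both directions uniformly in $k$ and $m$. Your argument instead pairs against the complete basis, where $p_k=m_k$ is literally a dual basis vector, and your ``if'' direction isolates a cleaner structural fact: any primitive element of even degree is automatically central, because every braiding sign in the iterated coproduct of $h_\mu$ has the form $(-1)^{k\cdot(\text{integer})}$. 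That generality is what your approach buys; the cost is that the ``only if'' direction then needs separate explicit counterexamples (odd $k\geq3$ versus $k=1$), whereas the paper's computation covers it in the same stroke. Both arguments use characteristic zero (really only $2\neq0$), consistent with the standing assumption of that subsection.
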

\begin{proof} We will show that  $(p_kh_m,e_\lambda)=(h_mp_k,e_\lambda)$ for every $m\geq0$ and every $\lambda\vdash(k+m)$, if and only if $k$ is even.  Let $\ell$ be the length of $\lambda$.  The coproduct of $e_\lambda$ is
\begin{equation*}
\Delta(e_\lambda)=\prod_{i=1}^\ell\sum_{j=0}^{\lambda_i}e_j\otimes e_{\lambda_i-j}=\sum_{\alpha}(e_{a_1}\otimes e_{\lambda_1-a_1})\cdots(e_{a_\ell}\otimes e_{\lambda_\ell-a_\ell}),
\end{equation*}
where the last sum is over all $\alpha$ such that $|\alpha|=k+m$ and $0\leq a_j\leq\lambda_j$ for each $j$.  When paired against $p_kh_m$ or $h_mp_k$, only partitions $\lambda=(k+1,1^{m-1})$ and $\lambda=(k,1^m)$ yield nonzero results.  It is straightforward to check that
\begin{eqnarray*}
&(p_kh_m,e_{k+1}e_1^{m-1})=1,		\qquad	&(h_mp_k,e_{k+1}e_1^{m-1})=(-1)^{k(m-1)},\\
&(p_kh_m,e_ke_1^m)=1,			\qquad	&(h_mp_k,e_ke_1^m)=(-1)^{km},
\end{eqnarray*}
using the adjointness of multiplication and comultiplication.  The result follows.\end{proof}
In fact, the center (which coincides with the supercenter) is precisely the polynomial algebra generated by the $p_{2k}$'s.  This will follow from the results of \cite{EKL}, but it would be nice to have a proof wholly within the framework of the Hopf algebra approach.

%
\subsection{Odd Schur functions}\label{subsec-schur}
%

We begin by reviewing some terminology from the combinatorics of Young diagrams.  Let $\lambda$ be a Young diagram.  A \textit{Young tableau} $T$ of shape $\lambda$ is an assignment of a positive integer to each box of $\lambda$.  We say $T$ is \textit{semistandard} if its entries are non-decreasing in all rows and strictly increasing in all columns.  

The \textit{content} of a semistandard Young tableau $T$ of shape $\lambda$, denoted $\cont(T)$, is the weak composition $\alpha=(a_1,\ldots,a_r)$ of $|\lambda|$ defined by
\begin{equation*}
a_i=\text{the number of entries of }T\text{ equal to }i.
\end{equation*}
In the even ($q=1$) case, the Schur functions $\lbrace s_\lambda\rbrace_{\lambda\vdash n}$ form an orthonormal basis of $\sym_n$.  In terms of power series, they are generating functions for semistandard Young tableaux.  If we denote the set of semistandard Young tableaux of shape $\lambda\vdash n$ by $\text{SSYT}(\lambda)$, one definition of the Schur function $s_\lambda$ is
\begin{equation*}
s_\lambda=\sum_{T\in\text{SSYT}(\lambda)}x^{\cont(T)}.
\end{equation*}
Then if one defines the \textit{Kostka number} associated to partitions $\lambda,\mu$ to be 
\begin{equation*}
K_{\lambda\mu}=\text{the number of semistandard Young tableaux of shape }\lambda\text{ and content }\mu,
\end{equation*}
it follows from the above and \eqref{eqn-m-power-series} that
\begin{equation}\label{eqn-even-s-m}
s_\lambda=\sum_{\mu\vdash n}K_{\lambda\mu}m_\mu.
\end{equation}
Having expressed Schur functions in terms of the dual basis to the complete functions, we have a definition which we can attempt to mimic in the odd ($q=-1$) case.  An essential feature in the even case is that the Schur functions $\lbrace s_\lambda\rbrace_{\lambda\vdash n}$ form an orthonormal basis of $\sym_n$; in the odd case, Schur functions will be orthogonal but their norms may be either 1 or $-1$.

\vspace{0.07in}

In the odd case, we define the \textit{odd Schur functions} by a change of basis relation closely related to \eqref{eqn-even-s-m},
\begin{equation}\label{eqn-s-to-h}
h_\mu=\sum_{\lambda\vdash n}K_{\lambda\mu}s_\lambda.
\end{equation}
To define the coefficients $K_{\lambda\mu}$, the \textit{odd Kostka numbers}, we first define the sign associated to a Young tableau $T$.  For a Young tableau $T$, let $w_r(T)$ be its \textit{row word}, that is, the string of numbers obtained by reading the entries of $T$ row by row from left to right, bottom to top.  Then define $\sign(T)$ to be the sign of the minimal length permutation which sorts $w_r(T)$ into non-decreasing order.  The sign of a standard Young tableau was introduced by Stanley in \cite{StanleySignImbalance}; our notion is an obvious generalization.

For a Young diagram $\lambda$, let $T_\lambda$ be the unique semistandard Young tableau with shape and content both equal to $\lambda$.  In other words every first-row entry of $T_\lambda$ is a 1, every second-row entry is a 2, and so forth.  With these notations established, we can define
\begin{equation}
K_{\lambda\mu}=\sign(T_\lambda)\sum_T\sign(T),
\end{equation}
where the sum is over all semistandard Young tableaux $T$ of shape $\lambda$ and content $\mu$.  Note that $K_{(n)\mu}=1$ for all $\mu\vdash n$, $K_{(1^n)\mu}=\delta_{\mu,(1^n)}$, and $K_{\lambda\lambda}=1$ for all $\lambda$.  Odd Kostka numbers of the form $K_{\lambda(1^n)}$ sign-count standard Young tableaux of a given shape; up to an overall sign, this count is what Stanley calls the \textit{sign imbalance} of the partition $\lambda$ \cite{StanleySignImbalance}.  Sign imbalance is a topic of current study; see \cite{Kim}, \cite{Lam2}.  Tables of odd Kostka numbers are included in Subsection \ref{subsec-data-bases} of the Appendix.

\vspace{0.07in}

\begin{example}
\noindent To compute $K_{(2,2,1),(1^5)}$:\\
\begin{center}
\begin{tabular}{| c c c c c c |}
\hline
&&&&&\\
tableau $T$	&	$\young(12,34,5)$	&	$\young(12,35,4)$	&	$\young(13,24,5)$	&	$\young(13,25,4)$	&	$\young(14,25,3)$	\\
$\sign(T)$		&	$+$				&	$-$				&	$-$				&	$+$				&			$-$				\\
&&&&&\\
\hline
\end{tabular}\\
\end{center}
Since $\sign(T_{(2,2,1)})=1$, we have $K_{(2,2,1),(1^5)}=-1$.
\end{example}

\begin{example}
\noindent To compute $K_{(3,1,1),(2,1,1,1)}$:\\
\begin{center}
\begin{tabular}{| c c c c |}
\hline
&&&\\
tableau $T$	&	$\young(112,3,4)$	&	$\young(113,2,4)$	&	$\young(114,2,3)$	\\
$\sign(T)$		&	$-$				&	$+$				&	$-$				\\
&&&\\
\hline
\end{tabular}\\
\end{center}
Since $\sign(T_{(3,1,1)})=-1$, we have $K_{(3,1,1),(2,1,1,1)}=1$.
\end{example}

\vspace{0.07in}

Subsection \ref{subsec-data-bases} of the Appendix lists the odd Schur functions through degree 5 in the complete functions basis.  The following theorem of Reifegerste and Sj\"{o}strand refines the usual RSK correspondence (see \cite{Fulton}) by keeping track of signs.  We give a new proof in Section \ref{sec-rsk}.
\begin{thm}[Odd RSK Correspondence I \cite{Reifegerste}, \cite{Sjostrand}]\label{thm-RSK-I} The RSK map is a bijection
\begin{equation}
\text{RSK}:\lbrace\quad
\substack{
\N\text{-matrices }A\text{ with} \\
\row(A)=\mu\text{ and }\col(A)=\rho
}\quad\rbrace\rightarrow\lbrace
\substack{\quad
\text{pairs }(P,Q)\text{ of semistandard Young tableaux of the} \\
\text{same shape, with }\cont(P)=\mu\text{ and }\cont(Q)=\rho}\quad\rbrace,
\end{equation}
under which the sign of $A$ as in the computation of $M_{\mu\rho}'$ equals $(-1)^{\binom{\lambda^T}{2}}\sign(P)\sign(Q)$, where $\lambda=\text{shape}(P)=\text{shape}(Q)$.  In particular,
\begin{equation}\label{eqn-RSK-I}\begin{split}
M_{\mu\rho}'&=\sum_{\lambda\vdash n}(-1)^{\binom{\lambda^T}{2}}K_{\lambda\mu}K_{\lambda\rho}\\
&=\sum_{\lambda\vdash n}(-1)^{\lambda_2+\lambda_4+\lambda_6+\ldots}K_{\lambda\mu}K_{\lambda\rho}.
\end{split}\end{equation}
\end{thm}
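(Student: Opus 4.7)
The set-theoretic bijection is the classical Robinson-Schensted-Knuth correspondence, so the main task is sign-tracking. My plan is to first convert the SW-NE sign of $A$ into an inversion-type sign on the biword of $A$, and then induct on the biword length, analyzing one Schensted insertion at a time.

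To begin, encode $A$ as a biword $w_A = (t_\bullet, b_\bullet)$ of length $|\lambda|$: each matrix entry $a_{ij}$ contributes $a_{ij}$ positions with top-row entry $j$ and bottom-row entry $i$, with the list sorted so that $t_\bullet$ is non-decreasing and, within blocks of equal $t$, $b_\bullet$ is non-decreasing. Using the congruence
\begin{equation*}
\binom{a+b}{2} \equiv \binom{a}{2} + \binom{b}{2} + ab \pmod 2
\end{equation*}
iteratively, I would show that the SW-NE sign of $A$ equals $(-1)^{\mathrm{inv}(b_\bullet)}$: pairs of biword positions with equal $t$ come from the same column of $A$ (hence not SW-NE) and are non-inverting by the sorting convention, while pairs with distinct $t$ are in bijection with pairs of matrix entries in SW-NE configuration, each pair $a,b$ contributing $ab$ both to the exponent in $(-1)^{ab}$ and to $\mathrm{inv}(b_\bullet)$.

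For the inductive step, I would delete the last biword position $(t_N, b_N)$, producing $A'$ with RSK image $(P', Q')$ of shape $\lambda'$. On the matrix side, the change in $(-1)^{\mathrm{inv}(b_\bullet)}$ equals $(-1)^{\#\{k < N : b_k > b_N\}}$. On the tableau side, the passage $P' \to P$ is a Schensted row insertion of $b_N$, placing a new corner filled by $t_N$ in $Q$; the resulting change in $\sign(P)/\sign(P')$ can be read off the bumping path, since each bumped element moves down exactly one row and thereby contributes a controlled change to the sorting sign of the row word. The inductive identity to verify is that these two local changes agree, up to the difference $(-1)^{\ang{\lambda^T}+|\lambda| - \ang{(\lambda')^T} - |\lambda'|}$ caused by appending a single box to $\lambda'$. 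Equation \eqref{eqn-RSK-I} then follows by summing the sign identity over matrices with fixed row and column sums, and the second form is obtained via the identity $(-1)^{\ang{\lambda}}(-1)^{\lambda_2^T+\lambda_4^T+\cdots}=(-1)^{|\lambda|}$ from Subsection \ref{subsec-e-h}, applied with $\lambda$ replaced by $\lambda^T$.

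The main obstacle is the bumping-path sign lemma expressing $\sign(P)/\sign(P')$ in terms of the insertion data and matching it with $(-1)^{\#\{k < N : b_k > b_N\}}$ after the shape correction. I expect this to split into cases based on whether the insertion appends to an existing row or bumps an entry downward, with each case giving an inversion count along the bumping path. The subtlety is that the sorting sign of $w_r(P)$ depends on comparisons between the bumped elements and all later rows of $P$, so careful local-to-global bookkeeping will be required to confirm that the net sign change collapses to the expected shape-dependent form.
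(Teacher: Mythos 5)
Your overall route is the same as the paper's: pass from $A$ to a biword, identify the SW--NE sign of $A$ with the inversion sign of the insertion word (this step is fine, and in fact easier than you suggest -- the $M'$-sign involves only the factors $(-1)^{ab}$, so the congruence $\binom{a+b}{2}\equiv\binom{a}{2}+\binom{b}{2}+ab$ is not needed), and then induct one Schensted insertion at a time. The genuine problem is the inductive identity you propose to verify: it omits the change in $\sign(Q)$ and is false as stated. When the inserted letter comes to rest in row $s+1$, the new corner of $Q$ is filled with $t_N$, which is weakly maximal among all entries of $Q$ and (by the standard horizontal-strip property of the recording tableau) strictly larger than every entry in rows $1,\dots,s$; since row words are read bottom to top, this multiplies $\sign(Q)$ by $(-1)^{\lambda'_1+\cdots+\lambda'_s}$, which is not $1$ in general. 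Concretely, take $A=\left(\begin{smallmatrix}0&1\\1&0\end{smallmatrix}\right)$: at the second insertion the inversion sign of the bottom word changes by $-1$, $\sign(P)$ changes by $-1$, and the shape sign $(-1)^{\ang{\lambda^T}+|\lambda|}$ changes by $-1$, so your two-term identity would require $-1=+1$; the missing factor is exactly $\sign(Q)/\sign(Q')=-1$. So the bookkeeping must track all three tableau-side quantities: $\sign(P)$, $\sign(Q)$, and the shape sign.

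Once the $Q$-factor is restored, the cancellation does go through, and the ``main obstacle'' you identify has a cleaner resolution than a direct bumping-path analysis of $\sign(P)/\sign(P')$; it is how the paper argues. Track instead the number of elementary Knuth transformations needed to bring the growing insertion word to the row word of $P$: each Knuth relation transposes two distinct adjacent letters, hence flips the sorting sign, and a bump through row $j$ costs exactly $\lambda'_j-1$ Knuth moves while the final placement in row $s+1$ costs none. Thus the discrepancy between the inversion-sign change of the bottom word and the change of $\sign(P)$ is $(-1)^{\sum_{j\leq s}(\lambda'_j-1)}$, which cancels against the shape-sign change $(-1)^{s}$ (the new box lies in row $s+1$) and $\sign(Q)/\sign(Q')=(-1)^{\lambda'_1+\cdots+\lambda'_s}$. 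Your base case, the summation over matrices with fixed row and column sums, and the reduction of the second displayed formula to the first via $(-1)^{\ang{\lambda}}(-1)^{\lambda_2^T+\lambda_4^T+\cdots}=(-1)^{|\lambda|}$ applied to $\lambda^T$ are all fine.
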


\begin{cor}\label{cor-m-to-s} The Schur function $s_\lambda$ can be expressed in terms of the monomial functions as
\begin{equation}\label{eqn-m-to-s}
(-1)^{\binom{\lambda^T}{2}}s_\lambda=\sum_{\mu\vdash n}K_{\lambda\mu}m_\mu.
\end{equation}\end{cor}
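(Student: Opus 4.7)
The plan is to reduce the corollary to a pairing computation and then invoke Theorem \ref{thm-RSK-I} (the odd RSK correspondence) to identify the coefficients.

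First, expand $s_\lambda$ uniquely in the monomial basis as $s_\lambda=\sum_{\mu\vdash n}c_{\lambda\mu}m_\mu$. The defining duality $(h_\nu,m_\mu)=\delta_{\nu\mu}$ from Subsection \ref{subsec-dual-bases} immediately gives
\begin{equation*}
c_{\lambda\nu}=(h_\nu,s_\lambda),
\end{equation*}
so the problem reduces to evaluating $(h_\nu,s_\lambda)$ for each pair of partitions.

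Next, I would use the defining equation \eqref{eqn-s-to-h}, $h_\rho=\sum_\lambda K_{\lambda\rho}s_\lambda$, to expand the second factor in the inner product $(h_\nu,h_\rho)=M'_{\nu\rho}$, obtaining
\begin{equation*}
M'_{\nu\rho}=\sum_{\lambda\vdash n}K_{\lambda\rho}\,c_{\lambda\nu}.
\end{equation*}
On the other hand, Theorem \ref{thm-RSK-I} gives
\begin{equation*}
M'_{\nu\rho}=\sum_{\lambda\vdash n}(-1)^{\ang{\lambda^T}+|\lambda|}K_{\lambda\nu}K_{\lambda\rho}.
\end{equation*}
Equating these and viewing each side as a linear combination of the columns $\rho\mapsto K_{\lambda\rho}$, I would read off $c_{\lambda\nu}=(-1)^{\ang{\lambda^T}+|\lambda|}K_{\lambda\nu}$ by invertibility of the Kostka matrix. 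Substituting this back into $s_\lambda=\sum_\mu c_{\lambda\mu}m_\mu$ and using that the sign $(-1)^{\ang{\lambda^T}+|\lambda|}$ squares to $1$ yields exactly \eqref{eqn-m-to-s}.

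The one ingredient that needs justification is the invertibility step, which I expect to be the main (but minor) obstacle. The odd Kostka number $K_{\lambda\mu}$ is a signed sum over the same set of semistandard Young tableaux that computes the even Kostka number, so its support is unchanged: $K_{\lambda\mu}=0$ unless $\mu\leq\lambda$ in the dominance order, and $K_{\lambda\lambda}=1$ (as recorded in the paper just after the definition of $K_{\lambda\mu}$). Ordering partitions by any total order refining dominance, the matrix $(K_{\lambda\rho})_{\lambda,\rho}$ is therefore triangular with $1$'s on the diagonal and hence invertible over $\Z$, so the system above has the unique solution identified. Everything else is bookkeeping with the bilinear form and an application of the RSK theorem.
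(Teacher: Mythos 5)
Your proof is correct and is essentially the paper's own argument: both combine the defining relation \eqref{eqn-s-to-h}, the duality $(h_\nu,m_\mu)=\delta_{\nu\mu}$ (equivalently \eqref{eqn-hemf}), and Theorem \ref{thm-RSK-I}, and then cancel the unitriangular Kostka matrix to read off the coefficients. Your explicit justification of the invertibility of $(K_{\lambda\mu})$ is a welcome detail that the paper leaves implicit in its choice of a total order refining dominance.
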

\begin{proof} Define matrices $A,B,C$, all square and indexed by all partitions of $n$, by
\begin{equation*}\begin{split}
&A_{\lambda\mu}=M_{\lambda\mu}',\\
&B_{\lambda\mu}=K_{\lambda\mu},\\
&C_{\lambda\mu}=(-1)^{\binom{\lambda^T}{2}}K_{\lambda\mu}.
\end{split}\end{equation*}
The ordering on the index set can be taken to be any total ordering which refines the dominance partial order.  In these terms, equation \eqref{eqn-s-to-h} says that $B^T$ takes the Schur basis to the complete basis and equation \eqref{eqn-hemf} says that $A$ takes the monomial basis to the complete basis.  It follows that $(B^T)^{-1}A$ takes the monomial basis to the Schur basis.  Now equation \eqref{eqn-RSK-I} says that $A=B^TC$, proving the corollary.\end{proof}

\begin{cor}\label{cor-schur-orthonormality} The Schur functions are signed-orthonormal:
\begin{equation}\label{eqn-schur-orthonormality}
(s_\lambda,s_\mu)=(-1)^{\binom{\lambda^T}{2}}\delta_{\lambda,\mu}.
\end{equation}\end{cor}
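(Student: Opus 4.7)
The plan is to combine the two expressions for $s_\lambda$ already in hand and apply the duality of the bases $\{h_\lambda\}$ and $\{m_\mu\}$. Let $\epsilon_\lambda := (-1)^{\langle\lambda^T\rangle+|\lambda|}$, so that Corollary \ref{cor-m-to-s} becomes
\begin{equation*}
s_\lambda \;=\; \epsilon_\lambda \sum_{\rho\vdash n} K_{\lambda\rho}\, m_\rho .
\end{equation*}
This gives one of the two arguments of the bilinear form in the monomial basis, which is perfectly set up to be paired against something in the complete basis.

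For the other argument I will use equation \eqref{eqn-s-to-h}, read as the matrix equation $\mathbf{h}=K^{T}\mathbf{s}$ where $K=(K_{\lambda\mu})$ is indexed by partitions of $n$ in some total refinement of the dominance order. Because $K$ is upper-triangular with $1$'s on the diagonal (since $K_{\lambda\lambda}=1$ and $K_{\lambda\mu}=0$ unless $\lambda\ge\mu$ in dominance, which is immediate from the semistandard condition), it is invertible over $\Z$, and inverting yields
\begin{equation*}
s_\mu \;=\; \sum_{\tau\vdash n} (K^{-1})_{\tau\mu}\, h_\tau .
\end{equation*}

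Now compute directly, using bilinearity and the defining property $(m_\rho,h_\tau)=\delta_{\rho\tau}$ of the odd monomial basis:
\begin{equation*}
(s_\lambda,s_\mu) \;=\; \epsilon_\lambda \sum_{\rho,\tau} K_{\lambda\rho}\,(K^{-1})_{\tau\mu}\,(m_\rho,h_\tau)
\;=\; \epsilon_\lambda \sum_{\rho} K_{\lambda\rho}\,(K^{-1})_{\rho\mu}
\;=\; \epsilon_\lambda\,(K K^{-1})_{\lambda\mu}
\;=\; \epsilon_\lambda\,\delta_{\lambda\mu} ,
\end{equation*}
which is exactly the desired signed orthonormality \eqref{eqn-schur-orthonormality}.

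There is no real obstacle here: all of the combinatorial substance, including the sign $\epsilon_\lambda$, has already been absorbed into Theorem \ref{thm-RSK-I} and Corollary \ref{cor-m-to-s}. The only small thing to verify in passing is the invertibility of the Kostka matrix $K$, which is standard and follows from $K_{\lambda\lambda}=1$ together with the triangularity $K_{\lambda\mu}=0$ for $\lambda\not\ge\mu$ in the dominance order.
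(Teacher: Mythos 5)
Your proof is correct and is essentially the paper's own argument: both rest on the two change-of-basis relations \eqref{eqn-m-to-s} and \eqref{eqn-s-to-h} together with the duality $(h_\lambda,m_\mu)=\delta_{\lambda\mu}$ of the complete and monomial bases. The paper merely packages the same computation as the identity $\sum_{\lambda\vdash n}(-1)^{\ang{\lambda^T}+|\lambda|}s_\lambda\otimes s_\lambda=\sum_{\mu\vdash n}m_\mu\otimes h_\mu$ in $\sym\otimes\sym$ and invokes duality of bases, whereas you invert the unitriangular odd Kostka matrix explicitly (an inversion that is already implicit in taking \eqref{eqn-s-to-h} as the definition of $s_\lambda$), so the content is the same.
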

\begin{proof}
In $\sym\otimes\sym$, equations \eqref{eqn-m-to-s} and \eqref{eqn-s-to-h} imply
\begin{equation*}
\sum_{\lambda\vdash n}(-1)^{\binom{\lambda^T}{2}}s_\lambda\otimes s_\lambda=\sum_{\lambda,\mu\vdash n}K_{\lambda\mu}m_\mu\otimes s_\lambda=\sum_{\mu\vdash n}m_\mu\otimes h_\mu.
\end{equation*}
Since $\lbrace m_\lambda\rbrace_{\lambda\vdash n}$ and $\lbrace h_\lambda\rbrace_{\lambda\vdash n}$ are dual bases, it follows that $\lbrace s_\lambda\rbrace_{\lambda\vdash n}$ and $\lbrace(-1)^{\binom{\lambda^T}{2}}s_\lambda\rbrace_{\lambda\vdash n}$ are dual bases.\end{proof}

In order to express the Schur functions in the elementary and forgotten bases, note that the two following properties uniquely characterize the Schur functions:
\begin{enumerate}
\item $(s_\lambda,h_\mu)=0$ if $\mu>\lambda$ (lexicographic order).
\item For certain integers $a_\mu$ (depending on $\lambda$), 
\begin{equation}\label{eqn-h-to-s}
s_\lambda=h_\lambda+\sum_{\mu>\lambda}a_\mu h_\mu.
\end{equation}
\end{enumerate}
That these uniquely determine the Schur functions follows from Lemma \ref{lem-E-geq-nondegenerate}.  The first property follows immediately from equation \eqref{eqn-m-to-s} and the second follows from equation \eqref{eqn-s-to-h}.  We think of these conditions as an inductive definition of $s_\lambda$, starting from $s_{(n)}=h_n$.
\begin{prop}\label{prop-s-prime} Define the elements $s_\lambda'$ of $\sym$ inductively as follows: $s_{(1^n)}'=e_n$, and the following two properties hold:
\begin{enumerate}
\item $(s_\lambda',e_\mu)=0$ if $\mu>\lambda^T$ (lexicographic order).
\item For certain integers $b_\mu$ (depending on $\lambda$),
\begin{equation*}
s_\lambda'=e_{\lambda^T}+\sum_{\mu>\lambda^T}b_\mu e_\mu.
\end{equation*}
\end{enumerate}
Then $s_\lambda'=(-1)^{\ell(w_\lambda)+\binom{\lambda^T}{2}}s_\lambda$.\end{prop}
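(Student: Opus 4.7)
The plan is to verify that $\epsilon s_\lambda$, with $\epsilon=(-1)^{\ell(w_\lambda)+\ang{\lambda^T}+|\lambda|}$, satisfies both properties characterizing $s_\lambda'$; uniqueness will then finish the proof. Uniqueness itself follows from Lemma \ref{lem-E-geq-nondegenerate}: any two candidates differ by an element of $E_{>\lambda^T}$ that annihilates $E_{>\lambda^T}$ under the bilinear form, so nondegeneracy forces the difference to vanish.

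For property (1), I will use that odd Kostka numbers $K_{\nu\mu}$ vanish unless $\nu\geq\mu$ in dominance (there are no semistandard tableaux otherwise), so the change-of-basis matrix in $h_\mu=\sum_\nu K_{\nu\mu}s_\nu$ is dominance-triangular with ones on the diagonal. Inverting gives $s_\lambda=h_\lambda+\sum_{\nu>\lambda\ \text{dom}}c_\nu h_\nu$. For each such $\nu$, transposition reverses dominance, so $\nu^T\leq\lambda^T$ in dominance, hence in lex (since lex refines dominance). Thus for $\mu>\lambda^T$ in lex we have $\mu>\nu^T$ in lex, and Proposition \ref{prop-e-h-semi-orthogonality}(2) gives $(h_\nu,e_\mu)=0$. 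Summing, $(s_\lambda,e_\mu)=0$, which is property (1). Specializing to $\mu=\lambda^T$ leaves only the $h_\lambda$ term, and we read off the key ``diagonal'' value $(s_\lambda,e_{\lambda^T})=(h_\lambda,e_{\lambda^T})=(-1)^{\ell(w_\lambda)}$.

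For property (2), I will expand $e_{\lambda^T}$ in Schur functions using Corollary \ref{cor-schur-orthonormality}. The dual-basis identity $\sum_{\lambda'}\sigma_{\lambda'}s_{\lambda'}\otimes s_{\lambda'}=\sum_\mu e_\mu\otimes f_\mu$ in $\sym\otimes\sym$ (both sides equal the canonical pairing tensor, and the signs $\sigma_{\lambda'}=(-1)^{\ang{(\lambda')^T}+|\lambda'|}$ account for the nonstandard Schur norms), paired in the right slot against $e_{\lambda^T}$, gives
\begin{equation*}
e_{\lambda^T}=\sum_{\lambda'}\sigma_{\lambda'}(s_{\lambda'},e_{\lambda^T})\,s_{\lambda'}=\epsilon\,s_\lambda+\sum_{(\lambda')^T>\lambda^T\ \text{lex}}\sigma_{\lambda'}(s_{\lambda'},e_{\lambda^T})\,s_{\lambda'},
\end{equation*}
where property (1) applied to each $\lambda'$ restricts the sum and the leading coefficient computes to $\sigma_\lambda\cdot(-1)^{\ell(w_\lambda)}=\epsilon$. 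Now induct on $\lambda$ ordered by $\lambda^T$ decreasing in lex. The base case $\lambda=(1^n)$ has $\lambda^T=(n)$ lex-maximal, so no higher terms appear; one verifies $s_{(1^n)}=\epsilon\,e_n$ directly using Corollary \ref{cor-m-to-s} together with the identification $m_{(1^n)}=e_n$. For the inductive step, each $s_{\lambda'}$ in the residual sum equals $\pm s_{\lambda'}'\in E_{\geq(\lambda')^T}\subseteq E_{>\lambda^T}$ by hypothesis, so rearranging puts $\epsilon s_\lambda$ in $e_{\lambda^T}+E_{>\lambda^T}\subseteq E_{\geq\lambda^T}$ with $e_{\lambda^T}$-coefficient equal to one. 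This is property (2), and combined with property (1), uniqueness forces $\epsilon s_\lambda=s_\lambda'$.

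The main subtlety is the interplay between the lex and dominance orders: the characterizations of $s_\lambda$ and $s_\lambda'$ are phrased in lex, while Proposition \ref{prop-e-h-semi-orthogonality} and the Kostka triangularity are most naturally stated in dominance. The argument leans on lex refining dominance to push dominance-level vanishing into the lex hypotheses required; conversely, one must avoid any step that would need transposition to reverse lex order, which it does not (the paper flags the incomparable pairs $\{(3,1,1,1),(2,2,2)\}$ and $\{(4,1,1),(3,3)\}$ as explicit witnesses).
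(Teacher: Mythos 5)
Your proposal is correct, but it follows a genuinely different route from the paper's. The paper's proof is much shorter: by Lemma \ref{lem-E-geq-nondegenerate} together with the $h$- and $e$-triangular characterizations, both $s_\lambda$ and $s_\lambda'$ lie in (and span) the one-dimensional space $(H_{\geq\lambda}\cap E_{\geq\lambda^T})\otimes\Q$, so they are proportional, and the constant is pinned down by computing the single pairing $(s_\lambda,s_\lambda')=(-1)^{\ell(w_\lambda)}$ via Proposition \ref{prop-e-h-semi-orthogonality} and then invoking the signed orthonormality of Corollary \ref{cor-schur-orthonormality}. You never use the intersection $H_{\geq\lambda}\cap E_{\geq\lambda^T}$; instead you verify the two defining properties directly for $(-1)^{\ell(w_\lambda)+\ang{\lambda^T}+|\lambda|}s_\lambda$ and finish with a uniqueness argument resting on the same Lemma \ref{lem-E-geq-nondegenerate} (nondegeneracy on $E_{>\lambda^T}$). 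Your property (1) step imports two standard facts the paper never states explicitly (SSYT existence forces dominance-triangularity of the odd Kostka matrix, and transposition reverses dominance), combined with the paper's remark that lex refines dominance; this is sound, and it neatly handles the subtlety you flag: since transposition does not reverse the lexicographic order in general (first failing at $n=6$), cross terms of the form $(h_\rho,e_\mu)$ with $\rho>\lambda$ and $\mu>\lambda^T$ need not vanish individually, which is exactly the delicate point in the paper's displayed inner-product computation (there the vanishing holds only after suitable grouping, e.g.\ using the proportionality $s_\lambda'\in\Q\, s_\lambda$ or property (1)). Your property (2) step, expanding $e_{\lambda^T}$ through the canonical element $\sum_\mu e_\mu\otimes f_\mu=\sum_{\lambda'}(-1)^{\ang{(\lambda')^T}+|\lambda'|}s_{\lambda'}\otimes s_{\lambda'}$ and inducting downward on $\lambda^T$ in lex (base case $m_{(1^n)}=e_n$, consistent with Corollary \ref{cor-m-to-s}), is also correct and has the side benefit of producing the explicit triangular $e$-expansion of $s_\lambda$; the cost is the extra induction and the Kostka/dominance bookkeeping, which the paper's one-dimensionality argument avoids entirely.
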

\begin{proof} By Lemma \ref{lem-E-geq-nondegenerate} and the property (2) preceding the statement of the Proposition, the space $(H_{\geq\lambda}\cap E_{\geq\lambda^T})\otimes\Q$ is one-dimensional and spanned by $s_\lambda$.  But it is also spanned by $s_\lambda'$, by property (2) in the statement of the proposition.  In order to determine the constant by which they differ, we compute
\begin{equation*}\begin{split}
(s_\lambda,s_\lambda')&=(h_\lambda,e_{\lambda^T})+\sum_{\mu>\lambda}a_\mu(h_\mu,e_{\lambda^T})\\
&\qquad\qquad+\sum_{\mu>\lambda^T}b_\mu(h_\lambda,e_\mu)+\sum_{\substack{\rho>\lambda\\\mu>\lambda^T}}a_\rho b_\mu(h_\rho,e_\mu)\\
&\refequal{\eqref{eqn-e-h-semi-orthogonality-2}}(-1)^{\ell(w_\lambda)}.
\end{split}\end{equation*}
Hence, by the signed orthonormality of Schur functions, $s_\lambda'=(-1)^{\ell(w_\lambda)+\binom{\lambda^T}{2}}s_\lambda$.\end{proof}

\begin{lem} The involution $\psi_1\psi_2$ acts on Schur functions as follows:
\begin{equation}
\psi_1\psi_2(s_\lambda)=(-1)^{\ell(w_\lambda)+|\lambda|}s_{\lambda^T}.
\end{equation}\end{lem}
\begin{proof}
We express $s_\lambda$ in terms of both complete and elementary functions and then compare the results ($a_\mu,b_\mu$ are integers depending on $\lambda$ and on $\mu$; their particular values are immaterial):
\begin{equation*}\begin{split}
\psi_1\psi_2(s_\lambda)&=\psi_1\psi_2\left(h_\lambda+\sum_{\mu>\lambda}a_\mu h_\mu\right)\\
&=(-1)^{\binom{\lambda}{2}+|\lambda|}e_\lambda+\sum_{\mu>\lambda}(-1)^{\binom{\mu}{2}+|\mu|}a_\mu h_\mu\\
\psi_1\psi_2(s_\lambda)&=\psi_1\psi_2\left((-1)^{\ell(w_\lambda)+\binom{\lambda^T}{2}}e_{\lambda^T}+\sum_{\mu>\lambda^T}b_\mu e_\mu\right)\\
&=(-1)^{\ell(w_\lambda)+|\lambda|}h_{\lambda^T}+\sum_{\mu>\lambda^T}(-1)^{\binom{\mu}{2}+|\mu|}b_\mu h_\mu.
\end{split}\end{equation*}
Since $H_{\geq\lambda}\cap E_{\geq\lambda^T}$ is generated by $s_{\lambda^T}$ as in the proof of Proposition \ref{prop-s-prime}, it follows that both the above expressions for $\psi_1\psi_2(s_\lambda)$ are equal to plus or minus $s_{\lambda^T}$.  Considering the leading coefficient of either one, we see that the sign between $\psi_1\psi_2(s_\lambda)$ and $s_{\lambda^T}$ must be $(-1)^{\ell(w_\lambda)+|\lambda|}$.\end{proof}

\begin{cor} The Schur function basis is related to the monomial and the complete bases as follows:
\begin{equation}\begin{split}
&(-1)^{\binom{\mu}{2}+|\mu|}e_\mu=\sum_{\lambda\vdash n}(-1)^{\ell(w_\lambda)+|\lambda|}K_{\lambda^T\mu}s_\lambda,\\
&(-1)^{\ell(w_\lambda)+\binom{\lambda^T}{2}+|\lambda|}s_\lambda=\sum_{\mu\vdash n}(-1)^{\binom{\mu}{2}+|\mu|}K_{\lambda^T\mu}f_\mu.
\end{split}\end{equation}\end{cor}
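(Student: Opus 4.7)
The plan is to obtain the first identity by applying the involution $\psi_1\psi_2$ to the defining relation \eqref{eqn-s-to-h} for odd Schur functions, and then to derive the second identity from the first by pairing with $s_\nu$ and invoking the signed orthonormality of the Schur basis.

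For the first identity, I would apply $\psi_1\psi_2$ to $h_\mu=\sum_{\lambda\vdash n}K_{\lambda\mu}s_\lambda$. By Proposition \ref{prop-psi432}, $\psi_1\psi_2(h_\mu)=(-1)^{\ang\mu}e_\mu$, and by the lemma immediately preceding the corollary, $\psi_1\psi_2(s_\lambda)=(-1)^{\ell(w_\lambda)+|\lambda|}s_{\lambda^T}$. Substituting these and reindexing the right-hand side by $\lambda\mapsto\lambda^T$ produces the first identity, provided we know $\ell(w_{\lambda^T})=\ell(w_\lambda)$. This equality is either immediate from the combinatorial description in the text of $\ell(w_\lambda)$ as the number of strictly SW--NE pairs of boxes in $\lambda$ (an invariant of the Young diagram under transpose), or from the observation that the inverse of the unique lite minimal representative in $S_\lambda\backslash S_n/S_{\lambda^T}$ is the unique lite minimal representative in $S_{\lambda^T}\backslash S_n/S_\lambda$, so $w_{\lambda^T}=w_\lambda^{-1}$ and length is invariant under inversion.

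For the second identity, I would pair the just-established first identity against $s_\nu$. On the right-hand side the signed orthonormality $(s_\lambda,s_\nu)=(-1)^{\ang{\lambda^T}+|\lambda|}\delta_{\lambda\nu}$ of Corollary \ref{cor-schur-orthonormality} collapses the sum to a single term, giving
\begin{equation*}
(s_\nu,e_\mu)=(-1)^{\ell(w_\nu)+\ang{\nu^T}+\ang\mu}K_{\nu^T\mu}.
\end{equation*}
Since $\{f_\mu\}_{\mu\vdash n}$ is by definition dual to $\{e_\mu\}_{\mu\vdash n}$, any $x\in\sym_n$ admits the expansion $x=\sum_\mu(x,e_\mu)f_\mu$. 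Setting $x=s_\lambda$ (renaming $\nu$ back to $\lambda$) and moving the sign $(-1)^{\ell(w_\lambda)+\ang{\lambda^T}}$ to the left produces precisely the second identity.

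I do not anticipate a serious obstacle. Everything is routine sign bookkeeping involving the exponents $\ang{\cdot}$, $\ell(w_\cdot)$, and $|\cdot|$; the only slightly non-formal ingredient is the transpose invariance $\ell(w_{\lambda^T})=\ell(w_\lambda)$, and that is either combinatorially transparent or a one-line coset-inversion argument as indicated above.
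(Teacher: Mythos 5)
Your proposal is correct: I checked the sign bookkeeping, and both the intermediate formula $(s_\nu,e_\mu)=(-1)^{\ell(w_\nu)+\ang{\nu^T}+\ang{\mu}}K_{\nu^T\mu}$ and the final identities come out as stated; the transpose-invariance $\ell(w_{\lambda^T})=\ell(w_\lambda)$, which you rightly flag as the only non-formal ingredient, is exactly as you say (either via SW--NE pairs or via inverting the lite coset representative), and the paper itself quietly relies on it. Your derivation of the first identity is identical to the paper's, which simply applies $\psi_1\psi_2$ to \eqref{eqn-s-to-h}. For the second identity, however, the paper's stated proof is to apply $\psi_1\psi_2$ to \eqref{eqn-m-to-s} as well, which implicitly requires knowing how $\psi_1\psi_2$ acts on the monomial basis, i.e.\ expressing $\psi_1\psi_2(m_\mu)$ as a signed $f_\mu$; this is never computed in the paper and is slightly delicate because $\psi_1\psi_2$ is \emph{not} an isometry of the bilinear form (e.g.\ $(h_2,h_2)=1$ while $(e_2,e_2)=-1$), so one cannot just transport dual bases --- indeed the sign turns out not to be $(-1)^{\ang{\mu}}$. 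Your alternative --- pair the first identity against $s_\nu$, collapse the sum by the signed orthonormality of Corollary \ref{cor-schur-orthonormality}, and then expand $s_\lambda=\sum_\mu(s_\lambda,e_\mu)f_\mu$ using the $e$/$f$ duality --- sidesteps this entirely and is self-contained; what it costs is a small extra appeal to nondegeneracy (so that $\{f_\mu\}$ is an honest basis and the expansion formula is valid), which the paper has already established. In short: same mechanism for the first formula, a cleaner and arguably more complete route for the second.
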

\begin{proof} Apply $\psi_1\psi_2$ to equations \eqref{eqn-s-to-h} and \eqref{eqn-m-to-s}.\end{proof}

\begin{cor}[``Odd RSK Correspondence II'']\label{cor-RSK-II} The following formula holds:
\begin{equation}\label{eqn-RSK-II}\begin{split}
(-1)^{\binom{\mu}{2}+|\mu|+\binom{\rho}{2}+|\rho|}M''_{\mu\rho}&=\sum_{\lambda\vdash n}(-1)^{\binom{\lambda^T}{2}}K_{\lambda^T\mu}K_{\lambda^T\rho}\\
&=\sum_{\lambda\vdash n}(-1)^{\lambda_2+\lambda_4+\lambda_6+\ldots}K_{\lambda^T\mu}K_{\lambda^T\rho}.
\end{split}\end{equation}\end{cor}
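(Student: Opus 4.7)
The plan is to derive this as a direct computation rather than setting up an independent bijection: all the ingredients are already present in the preceding corollary on the expansion of $e_\mu$ in the Schur basis together with the signed orthonormality of the odd Schur functions. Concretely, the strategy mirrors the classical derivation of the Cauchy identity from the two inverse Kostka-type formulas.

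First I would substitute the identity $(-1)^{\ang{\mu}}e_\mu=\sum_{\lambda\vdash n}(-1)^{\ell(w_\lambda)+|\lambda|}K_{\lambda^T\mu}s_\lambda$ from the preceding corollary into both slots of $M''_{\mu\rho}=(e_\mu,e_\rho)$. Using bilinearity, this produces
\begin{equation*}
(-1)^{\ang{\mu}+\ang{\rho}}M''_{\mu\rho}=\sum_{\lambda,\nu\vdash n}(-1)^{\ell(w_\lambda)+\ell(w_\nu)+|\lambda|+|\nu|}K_{\lambda^T\mu}K_{\nu^T\rho}\,(s_\lambda,s_\nu).
\end{equation*}
By the signed orthonormality of Schur functions (Corollary \ref{cor-schur-orthonormality}), $(s_\lambda,s_\nu)=(-1)^{\ang{\lambda^T}+|\lambda|}\delta_{\lambda\nu}$, so only the diagonal $\nu=\lambda$ contributes. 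Collapsing the double sum and noting that $(-1)^{2\ell(w_\lambda)+2|\lambda|}=1$ yields exactly $(-1)^{\ang{\mu}+\ang{\rho}}M''_{\mu\rho}=\sum_{\lambda\vdash n}(-1)^{\ang{\lambda^T}+|\lambda|}K_{\lambda^T\mu}K_{\lambda^T\rho}$, which is the first of the two claimed equalities.

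For the second equality I would invoke the combinatorial observation made near equation \eqref{eqn-defn-e-alt}, namely the identity $(-1)^{\ang{\lambda}}(-1)^{\lambda_2^T+\lambda_4^T+\ldots}=(-1)^{|\lambda|}$ valid for every partition $\lambda$. Applying this to $\lambda^T$ in place of $\lambda$ gives $(-1)^{\ang{\lambda^T}+|\lambda|}=(-1)^{\lambda_2+\lambda_4+\lambda_6+\ldots}$, which rewrites the sign in the right-hand side into the displayed form.

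There is no real obstacle here: the entire proof is a short signed calculation, with the only subtlety being careful bookkeeping of the signs $\ang{\mu},\ang{\rho},\ell(w_\lambda),|\lambda|$ and the cancellation $(-1)^{2\ell(w_\lambda)+2|\lambda|}=1$. The conceptual content has already been absorbed into Theorem \ref{thm-RSK-I} (used implicitly through its consequence Corollary \ref{cor-schur-orthonormality}) and Corollary \ref{cor-m-to-s} together with the $\psi_1\psi_2$-twisted version that produced the expansion of $e_\mu$ in the Schur basis; the present corollary is then essentially the ``dual Cauchy identity'' obtained by pairing the two dual expansions.
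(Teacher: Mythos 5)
Your argument is correct, and the sign bookkeeping checks out: substituting $(-1)^{\ang{\mu}}e_\mu=\sum_\lambda(-1)^{\ell(w_\lambda)+|\lambda|}K_{\lambda^T\mu}s_\lambda$ into both slots of $M''_{\mu\rho}=(e_\mu,e_\rho)$ and invoking Corollary \ref{cor-schur-orthonormality} collapses the double sum (the factor $(-1)^{2\ell(w_\lambda)+2|\lambda|}=1$ as you say), and rewriting $(-1)^{\ang{\lambda^T}+|\lambda|}$ as $(-1)^{\lambda_2+\lambda_4+\cdots}$ is exactly the identity recorded after \eqref{eqn-defn-e-alt}, applied to $\lambda^T$. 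Your route differs mildly from the paper's: the paper's proof is the instruction to argue as in Corollary \ref{cor-m-to-s}, i.e.\ to compose change-of-basis matrices --- the expansion of $(-1)^{\ang{\mu}}e_\mu$ in the Schur basis followed by the expansion of $(-1)^{\ell(w_\lambda)+\ang{\lambda^T}}s_\lambda$ in the forgotten basis (both from the corollary immediately preceding \ref{cor-RSK-II}) --- and to compare with $e_\mu=\sum_\rho M''_{\mu\rho}f_\rho$ from \eqref{eqn-hemf} by reading off the coefficient of $f_\rho$. You instead pair the Schur expansion of $e_\mu$ against that of $e_\rho$ under the bilinear form and use signed orthonormality. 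The two are equivalent, since signed orthonormality is precisely the statement that the Schur basis is its own signed dual, so pairing against $f_\rho$ or against the Schur expansion of $e_\rho$ yields the same sum; both reduce the corollary to the same earlier inputs (Theorem \ref{thm-RSK-I} through its corollaries, plus the $\psi_1\psi_2$-twisted expansions). Yours has the small advantage of needing only the first identity of that preceding corollary, at the cost of invoking Corollary \ref{cor-schur-orthonormality} explicitly rather than the forgotten-basis expansion.
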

\begin{proof} Argue as in the proof of Corollary \ref{cor-m-to-s}.\end{proof}
\noindent Why the scare quotes around the name of the corollary?  Unlike the formula for $M_{\mu\rho}'$ (Odd RSK Correspondence I, Theorem \ref{thm-RSK-I}), it does not appear that the above formula can be refined to a matching of signs between particular matrices and their RSK-corresponding pairs of semistandard Young tableaux.  Such a refined correspondence is possible after permuting the matrices counted in a particular $M''_{\mu\rho}$, but we do not know of a general rule governing these permutations.

\vspace{0.07in}

We conclude this section by comparing odd Schur functions to a generalization due to Lascoux, Leclerc, and Thibon \cite{LLT2} of the spin-weight domino symmetric functions of Carr\'{e} and Leclerc \cite{CarreLeclerc}.  We will review the definition of these functions very briefly; see \cite{LLT2} for details and examples.  Let $\lambda$ be a Young diagram.  A \textit{domino tableau} $D$ of shape $\lambda$ is a tiling of $\lambda$ by $2\times1$ dominos and an assignment of a positive integer to each domino which is weakly increasing in rows and strictly increasing in columns (in order for $\lambda$ to admit a domino tiling, it is necessary but not sufficient that $|\lambda|$ be even).  In the polynomial ring $\Z[x_1,x_2,\ldots,x_n]$, let $x^D$ be the monomial $\prod_ix_i^{a_i}$, where $a_i$ is the number of dominos of $D$ labelled $i$.  The \textit{spin} of $D$, denoted $s(D)$, is defined to be one-half the number of vercally oriented dominos in $D$.  If $\lambda$ has at least one domino tiling, let $s^*(\lambda)$ be the highest possible spin of a domino tiling of $\lambda$ and define the \textit{cospin} of a domino tableau $D$ of shape $\lambda$ to be $\widetilde{s}(D)=s^*(\lambda)-s(D)$.  The spin $s(D)$ is a non-negative half-integer and the cospin $\widetilde{s}(D)$ is always an integer.

For an indeterminate $q$, let
\begin{equation}
G_\lambda=\sum_Dq^{\widetilde{s}(D)}x^D,
\end{equation}
the sum being over all domino tableaux $D$ of shape $\lambda$.  This is called the \textit{spin-weight domino symmetric function} corresponding to $\lambda$; it is an (ordinary, not odd) symmetric function.  It was pointed out to us by Thomas Lam that there is likely a connection between spin-weight domino functions and odd Schur functions, but we do not know a precise relationship.  Two pieces of evidence for such a connection are that both are closely related to the combinatorics of tableau signs and that the norm of an odd Schur function can be computed as
\begin{equation*}
\langle s_\lambda,s_\lambda\rangle=(-1)^{\frac{1}{2}s^*(2\lambda)}.
\end{equation*}

%
\section{The even and odd RSK correspondences}\label{sec-rsk}
%

%
\subsection{The classical RSK correspondence}\label{subsec-classical-rsk}
%

The proof of Theorem \ref{thm-RSK-I} is simply a matter of keeping track of some signs in the bijection of the usual RSK correspondence.  Before giving the proof, we briefly review this bijection.  An excellent reference, whose notation we follow, is Chapter 4 of \cite{Fulton}.

Let $A=\lbrace a_1,a_2,\ldots\rbrace$ be an ordered alphabet.  In examples, we will take $A=\Z_{>0}$.  In order to keep track of tableaux systematically, we will use the \textit{plactic monoid} $Pl$, which is the associative monoid (without unit) defined by
\begin{equation}\begin{split}
\text{generators:}\qquad&A\\
\text{relations:}\qquad&yzx=yxz\qquad\text{if }x<y\leq z\qquad(K'),\\
&xzy=zxy\qquad\text{if }x\leq y<z\qquad(K'').
\end{split}\end{equation}
The \textit{plactic ring} $\Z Pl$ is defined to be the $\Z$-span of the plactic monoid, with multiplication induced by that of the monoid.  The relations $(K'),(K'')$ are known as \textit{elementary Knuth transformations}.  We define a map from the set of semistandard Young tableaux with entries in the alphabet $A$ to the plactic monoid or ring by
\begin{equation}\begin{split}
\lbrace\text{SSYTs}\rbrace&\to Pl\text{ or }\Z Pl\\
T&\mapsto w_r(T).
\end{split}\end{equation}
Here, $w_r(T)$ is the row word of $T$, that is, the word obtained by reading the entries of $T$ from left to right, bottom to top (see Subsection \ref{subsec-schur}).  The utility of the plactic ring is in large part due to the following remarkable theorem.
\begin{thm}[Section 2.1 of \cite{Fulton}]\label{thm-word-tableau} Every word is equivalent, via relations $(K')$ and $(K'')$, to the row word $w_r(T)$ of a unique semistandard tableau $T$.\end{thm}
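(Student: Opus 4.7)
The plan is to use the Schensted row-insertion algorithm as the bridge between arbitrary words and tableaux, and then verify that the two notions of equivalence — Knuth equivalence on words and equality on tableaux — match up on both sides.

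First I would establish \textbf{existence}. Define the row-insertion operation $T \leftarrow x$: if $x$ is at least every entry in the first row, append it; otherwise it bumps the leftmost entry strictly greater than $x$, and that bumped entry is then inserted into the second row by the same rule, and so on. A short check shows the result is again a semistandard Young tableau. Given a word $w = x_1 x_2 \cdots x_n$, let $P(w)$ be the tableau obtained by starting from the empty tableau and successively inserting $x_1,\ldots,x_n$. The key lemma is
\begin{equation*}
w_r(T) \cdot x \;\equiv\; w_r(T \leftarrow x) \quad \text{in } Pl,
\end{equation*}
which one proves by induction on the number of rows of $T$, tracking what happens when $x$ bumps an entry $y$ in the first row: the concatenation of the new first row with the old first row followed by $x$ can be massaged into $y$ followed by the updated first row using the relations $(K')$ and $(K'')$, and one then applies the inductive hypothesis to push $y$ into the remaining rows. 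Iterating, $w \equiv w_r(P(w))$.

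Next I would prove \textbf{uniqueness}, which is equivalent to the statement that if $w \equiv w'$ via a single elementary Knuth transformation, then $P(w) = P(w')$. By induction on the length of the common prefix before the transformation, it suffices to show: for any tableau $T$ and letters $x,y,z$ with the $(K')$-pattern $x<y\leq z$ or the $(K'')$-pattern $x\leq y<z$, we have
\begin{equation*}
((T \leftarrow y) \leftarrow x) \leftarrow z \;=\; ((T \leftarrow y) \leftarrow z) \leftarrow x
\end{equation*}
(and analogously for $(K'')$). This is a finite case analysis on how $x$ and $z$ are placed or bump entries in the first row of $T \leftarrow y$: one checks that the positions of $x$ and $z$ in the first row, and the identities of any bumped elements passed to row two, are the same on both sides, so by induction on the number of rows the resulting tableaux agree.

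Combining the two halves: existence shows every word is Knuth-equivalent to the row word of at least one tableau (namely $P(w)$), and uniqueness shows $P$ is constant on Knuth equivalence classes, so if $w_r(T) \equiv w_r(T')$ then $T = P(w_r(T)) = P(w_r(T')) = T'$, where one also uses the easy fact that $P(w_r(T)) = T$ (which follows by inserting the rows of $T$ from bottom to top and verifying each row insertion is trivial). The main obstacle is the case analysis in the uniqueness step: the interaction of two insertions $\leftarrow x, \leftarrow z$ with an intermediate $\leftarrow y$ satisfying the Knuth inequalities has to be tracked carefully across every row the bumping traverses, and it is here that the precise asymmetry in $(K')$ versus $(K'')$ is consumed.
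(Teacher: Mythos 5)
The paper itself does not prove this statement---it quotes it from Section 2.1 of \cite{Fulton}---so your sketch has to be measured against the standard literature proof rather than an in-paper argument. Your architecture is the classical Schensted--Knuth one and is sound in outline: the bumping lemma $w_r(T)\cdot x \equiv w_r(T\leftarrow x)$ gives existence, $P(w_r(T))=T$ is an easy check, and uniqueness reduces to showing that a single elementary Knuth move does not change the insertion tableau. (For the record, \cite{Fulton} proves existence exactly this way, but handles uniqueness by a different device---invariance of longest weakly increasing subsequence data under Knuth moves---rather than by the direct case analysis you propose.)

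The genuine gap is inside your uniqueness step. The claim that ``the identities of any bumped elements passed to row two \ldots are the same on both sides, so by induction on the number of rows the resulting tableaux agree'' is not correct as stated. In the $(K')$ configuration $x<y\leq z$, once $y$ sits in the first row, inserting $x$ and then $z$ bumps a letter $a$ from a position weakly left of $y$ and then a letter $b$ from a position strictly right of $y$, whereas inserting $z$ and then $x$ bumps the same two letters but passes them to the second row in the \emph{opposite} order, $b$ then $a$. Two row insertions performed in opposite orders do not in general yield the same tableau (insert $1$ then $2$ versus $2$ then $1$ into the empty tableau), so ``same letters, induct on rows'' does not close the induction. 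A concrete instance: if the first row after inserting $y=2$ is $(1,2,5)$ with rows $(3,6)$ and $(4)$ below, then with $x=1$, $z=3$ the second row receives $2$ then $5$ on one side and $5$ then $2$ on the other; the final tableaux do agree, but only because of the extra inequalities ($a\leq z<b$, and the way $a,b$ relate to the lower rows) that your formulation throws away. Repairing this is precisely the content of Knuth's case analysis: one must carry a stronger inductive statement about the pair passed down (or replace this step by a different uniqueness argument, e.g.\ Greene-style invariants as in \cite{Fulton}, or the row-bumping-route lemma). So the skeleton is right, but the pivotal inductive claim must be reformulated before the argument is complete.
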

\noindent Thus the set of all semistandard Young tableaux with entries in $A$ forms a basis of $\Z Pl$.  We will informally refer to the multiplication of tableaux in the following; what we mean is the multiplication of their row words in $\Z Pl$.  The relations $(K')$ and $(K'')$ can be interpreted as ``bumping transformations'':
\begin{equation*}\begin{split}
(K')\qquad&\young(yz)\cdot\young(x)=\young(xz,y)\qquad\text{if }x<y\leq z,\\
(K'')\qquad&\young(xz)\cdot\young(y)=\young(xy,z)\qquad\text{if }x\leq y<z.
\end{split}\end{equation*}
For a detailed exposition of bumping, see Section 1.1 of \cite{Fulton}.

\vspace{0.07in}

We remark that if a word $w$ is known to be the row word of some tableau, then it is easy to reconstruct the tableau from the word.  Since the row entries of a tableau never decrease and the column entries must always increase, reading the word $w$ from left to right until the first decrease simply gives the bottom row of the tableau.  Then continuing to read until the next decrease gives the second to bottom row, and so forth.  

\begin{example} Using $\Z_{>0}$ as the ordered alphabet,
\begin{equation*}
w=53422331112\qquad\text{corresponds to}\qquad\young(1112,2233,34,5).
\end{equation*}
\end{example}

Having recalled the language of elementary Knuth transformations and the plactic ring, we proceed to discuss the following result.  Let $N_{\mu\rho}$ be the number of $\N$-matrices $A$ with $\row(A)=\mu$ and $\col(A)=\rho$, and let $n=|\mu|=|\rho|$.
\begin{thm}[RSK Correspondence]\label{thm-RSK} The RSK map is a bijection
\begin{equation}
\text{RSK}:\lbrace\quad
\substack{
\N\text{-matrices }A\text{ with} \\
\row(A)=\mu\text{ and }\col(A)=\rho
}\quad\rbrace\rightarrow\lbrace\quad
\substack{
\text{pairs }(P,Q)\text{ of semistandard Young tableaux of the} \\
\text{same shape, with }\cont(P)=\mu\text{ and }\cont(Q)=\rho}\quad\rbrace.
\end{equation}
In particular,
\begin{equation}\label{eqn-RSK}
N_{\mu\rho}=\sum_{\lambda\vdash n}K_{\lambda\mu}K_{\lambda\rho}.
\end{equation}
\end{thm}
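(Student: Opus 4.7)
The plan is to prove the classical RSK correspondence by exhibiting the bijection explicitly via the row-insertion (bumping) algorithm, and then to verify that the resulting $P$ and $Q$ are semistandard Young tableaux of the claimed shape and content. The equation \eqref{eqn-RSK} follows formally from the bijection by sorting pairs $(P,Q)$ on the common shape $\lambda$ of $P$ and $Q$, since then the number of $P$'s of shape $\lambda$ and content $\mu$ is $K_{\lambda\mu}$ (and similarly for $Q$), so the total count splits as $\sum_\lambda K_{\lambda\mu}K_{\lambda\rho}$.

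First I would associate to an $\N$-matrix $A=(a_{ij})$ the \emph{biword} (or two-line array) whose columns list the pair $\binom{i}{j}$ with multiplicity $a_{ij}$, ordered so that the top row is weakly increasing and, within equal top entries, the bottom row is weakly increasing. The condition $\row(A)=\mu$ says the symbol $i$ appears $\mu_i$ times in the top row, and $\col(A)=\rho$ says $j$ appears $\rho_j$ times in the bottom. Conversely any such biword comes from a unique $\N$-matrix, so this step is a bijection between $\N$-matrices with the prescribed row/column sums and biwords of the prescribed content.

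Next I would run the insertion algorithm: start with $P=Q=\emptyset$ and for each column $\binom{i_k}{j_k}$ read left to right, replace $P$ by the row-insertion $P \leftarrow j_k$ and place $i_k$ into $Q$ in the unique new box created. The key facts to verify, all standard and whose proofs I will only sketch, are: (i) row insertion of a letter $j$ into an SSYT yields an SSYT with one box added (proved by induction on rows, using the bumping definition and the fact that bumping a weakly smaller letter preserves columnar strictness); (ii) because the top row of the biword is weakly increasing and within blocks of equal top entries the bottom is weakly increasing, the newly created boxes in $Q$ are added in positions that keep $Q$ semistandard — equal consecutive top entries force successive insertions of weakly larger bottom letters, which append boxes strictly to the right of the previous one in the same row (hence $Q$ is weakly increasing in rows for equal symbols and strictly increasing in columns); (iii) $P$ and $Q$ have the same shape by construction, and their contents read off the biword as $\cont(P)=\rho$, $\cont(Q)=\mu$.

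For the inverse, I would describe reverse-bumping: given $(P,Q)$, locate the box of $Q$ containing the largest entry (using the tie-breaking rule: among equal entries, the rightmost one), reverse-insert from $P$ the letter in the corresponding box, yielding a pair $(P',Q')$ together with an ejected letter $j$ and the index $i$ from $Q$; then $\binom{i}{j}$ is the last column of the biword. Iterating reconstructs the biword and hence $A$. The fact that this procedure is well-defined and is inverse to the forward algorithm is again a standard induction on the number of boxes, using Theorem \ref{thm-word-tableau} (uniqueness of the tableau represented by a plactic class) to show that at each stage the reverse bumping lands in the correct row. The main obstacle, and the one requiring the most care, is verifying that the tie-breaking conventions on both the biword ordering and the reverse-bumping choice of corner are compatible: this is precisely what ensures that $Q$ is semistandard in the forward direction and that the reverse procedure recovers the biword in its original order. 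Once bijectivity is established, equation \eqref{eqn-RSK} is immediate by partitioning both sides according to the common shape $\lambda$ of $(P,Q)$.
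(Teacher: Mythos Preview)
Your proposal is correct and follows the same approach as the paper: the paper states Theorem~\ref{thm-RSK} as classical background, describes the RSK map via the two-line array and Schensted row insertion exactly as you do, and refers to Fulton for the verification of bijectivity rather than writing it out; you have simply supplied the standard details (semistandardness of $P$ and $Q$, the reverse-bumping inverse) that the paper leaves to the reference.

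One small remark: with the insertion convention you (and the paper) describe---bottom row inserted into $P$, top row recorded in $Q$---one actually obtains $\cont(P)=\rho$ and $\cont(Q)=\mu$, as you wrote in point~(iii), whereas the theorem statement has these swapped. The paper's own examples (e.g.\ Example~\ref{ex-rsk-1}) agree with your labeling, so this is a harmless typo in the statement; by the symmetry of RSK under transposition of $A$ (which swaps $P$ and $Q$) the two versions are equivalent, and equation~\eqref{eqn-RSK} is unaffected since $N_{\mu\rho}=N_{\rho\mu}$.
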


\noindent We now describe the RSK map.  Let $A$ be an $\N$-matrix such that $\row(A)=\mu$ and $\col(A)=\rho$ are partitions and note that the sum of the entries of $A$ equals $n$.  For the purposes of this discussion, consider an entry equal to $k$ to be $k$ distinct entries, each equal to 1, all in the same place.  Order the entries of $A$ from left to right and top to bottom, as if reading a book.  For $j=1,\ldots,n$, let $u_j$ be the row number and $v_j$ the column number of the $j$-th entry in this ordering.  Organize these coordinates into a two-line array,
\begin{equation*}
\binom{\underline{u}}{\underline{v}}=\binom{u_1,u_2,\ldots,u_n}{v_1,v_2,\ldots,v_n}.
\end{equation*}
Note that $\underline{u}$ will always be non-decreasing.

To this two-line array, we associate a pair $(P,Q)$ of tableaux in the following way.  For $k=1,\ldots,n$, let $P_k$ be the unique tableau whose row word $w_r(P_k)$ is Knuth equivalent to $v_1\cdots v_k$, as guaranteed by Theorem \ref{thm-word-tableau}.  This is equivalent to proceeding one box at a time in Schensted's row insertion (bumping) algorithm.  So at each step, $P_k$ is a Young tableau with $k$ boxes whose entries are $\lbrace v_1,\ldots,v_k\rbrace$, and the shape of $P_k$ is obtained from the shape of $P_{k-1}$ by adding one box.  Let $Q_1$ be the one-box Young tableau with entry $u_1$.  Inductively, build $Q_k$ from $Q_{k-1}$ by placing a new box with entry $u_k$ at the location of the box of $P_k$ which was not in $P_{k-1}$.  So at each step, $P_k$ and $Q_k$ have the same shape, and the entries of $Q_k$ are $\lbrace u_1,\ldots,u_k\rbrace$.

The RSK map assigns the pair of final tableaux $(P_n,Q_n)$ to the matrix $A$.

\begin{example}\label{ex-rsk-1}  We illustrate the RSK correspondence and equation \eqref{eqn-RSK} for $\mu=\tiny\yng(1,1,1)$ and $\rho=\tiny\yng(2,1)$.
\begin{align*}
\begin{pmatrix}1&0\\1&0\\0&1\\\end{pmatrix}\qquad&\text{corresponds to}&
\young(112),\quad&\young(123),\\
\begin{pmatrix}1&0\\0&1\\1&0\\\end{pmatrix}\qquad&\text{corresponds to}&
\young(11,2),\quad&\young(12,3),\\
\begin{pmatrix}0&1\\1&0\\1&0\\\end{pmatrix}\qquad&\text{corresponds to}&
\young(11,2),\quad&\young(13,2).
\end{align*}
Indeed, 
\begin{equation*}\begin{split}
&N_{\tiny\yng(1,1,1),\yng(2,1)}=3,\\
&K_{\tiny\yng(3),\yng(1,1,1)}K_{\tiny\yng(3),\yng(2,1)}=1\cdot1,\\
&K_{\tiny\yng(2,1),\yng(1,1,1)}K_{\tiny\yng(2,1),\yng(2,1)}=2\cdot1,\\
&K_{\tiny\yng(1,1,1),\yng(2,1)}K_{\tiny\yng(1,1,1),\yng(1,1,1)}=0\cdot1.
\end{split}\end{equation*}
\end{example}

\begin{example}\label{ex-rsk-2}  We illustrate the RSK correspondence and equation \eqref{eqn-RSK} for $\mu=\tiny\yng(2,2)$ and $\rho=\tiny\yng(2,1,1)$.
\begin{align*}
\begin{pmatrix}2&0&0\\0&1&1\\\end{pmatrix}\qquad&\text{corresponds to}&
\young(1123),\quad&\young(1122),\\
\begin{pmatrix}1&1&0\\1&0&1\\\end{pmatrix}\qquad&\text{corresponds to}&
\young(113,2),\quad&\young(112,2),\\
\begin{pmatrix}1&0&1\\1&1&0\\\end{pmatrix}\qquad&\text{corresponds to}&
\young(112,3),\quad&\young(112,2),\\
\begin{pmatrix}0&1&1\\2&0&0\\\end{pmatrix}\qquad&\text{corresponds to}&\young(11,23),\quad&\young(11,22).
\end{align*}
Indeed, 
\begin{equation*}\begin{split}
&N_{\tiny\yng(2,2),\yng(2,1,1)}=4,\\
&K_{\tiny\yng(4),\yng(2,2)}K_{\tiny\yng(4),\yng(2,1,1)}=1\cdot1,\\
&K_{\tiny\yng(3,1),\yng(2,2)}K_{\tiny\yng(3,1),\yng(2,1,1)}=1\cdot2,\\
&K_{\tiny\yng(2,2),\yng(2,2)}K_{\tiny\yng(2,2),\yng(2,1,1)}=1\cdot1,\\
&K_{\tiny\yng(2,1,1),\yng(2,2)}K_{\tiny\yng(2,1,1),\yng(2,1,1)}=0\cdot1,\\
&K_{\tiny\yng(1,1,1,1),\yng(2,2)}K_{\tiny\yng(1,1,1,1),\yng(2,1,1)}=0\cdot0.\\
\end{split}\end{equation*}
\end{example}

%
\subsection{Proof of the odd RSK correspondence}\label{subsec-odd-rsk}
%

Having reviewed the classical RSK bijection, the proof of the odd RSK Correspondence (originally proved in \cite{Reifegerste}, \cite{Sjostrand}) is simply a matter of keeping track of the signs associated to the combinatorial objects in question.
\begin{proof}[Proof of Theorem \ref{thm-RSK-I}] In passing from a matrix $A$ to the corresponding two-row array $\binom{\underline{u}}{\underline{v}}=\binom{u_1,u_2,\ldots}{v_1,v_2,\ldots}$, we have
\begin{equation*}
\sign(\underline{u})=1,\qquad\sign(\underline{v})=\sign(A).
\end{equation*}
As we construct the semistandard Young tableaux $(P,Q)$ corresponding to $A$ from the words $\underline{u}$ and $\underline{v}$, we will keep track of the signs of their row words at each step.

Each pair $(u_j,v_j)$ describes an entry of the matrix $A$ (we consider an entry equal to 2 as two entries equal to 1, and so forth).  Let $A_1,A_2,\ldots$ be the sequence of matrices obtained by truncating $\underline{u}$ and $\underline{v}$.  That is, under the first step of the RSK correspondence,
\begin{equation*}\begin{split}
&A_1\leftrightarrow\binom{\underline{u}_1}{\underline{v}_1}=\binom{u_1}{v_1},\\
&A_2\leftrightarrow\binom{\underline{u}_2}{\underline{v}_2}=\binom{u_1,u_2}{v_1,v_2},\\
&A_3\leftrightarrow\binom{\underline{u}_3}{\underline{v}_3}=\binom{u_1,u_2,u_3}{v_1,v_2,v_3},
\end{split}\end{equation*}
and so forth.  Let $(P_k,Q_k)$ be the pair of semistandard Young tableaux corresponding to $A_k$, so that $P_k$ (respectively $Q_k$) is obtained from $P_{k-1}$ (respectively $Q_{k-1}$) by adding a box with label $v_k$ (respectively $u_k$).  Let $\lambda_{(k)}=\text{shape}(P_k)=\text{shape}(Q_k)$.  Since the theorem is easily verified for one-box tableaux, it suffices to check that if $A_k\leftrightarrow(P_k,Q_k)$ and $\sign(A_k)=(-1)^{\binom{\lambda_{(k)}^T}{2}}\sign(P_k)\sign(Q_k)$, then passing to $A_{k+1},P_{k+1},Q_{k+1},\lambda_{(k+1)}$ preserves this equality of signs.  (The use of the notation $\lambda_{(k)}$ for a diagram rather than a row length should hopefully cause no confusion.)

To track the sign incurred in adding a box to $P$ and to $Q$, note that both elementary Knuth transformations $(K')$ and $(K'')$ are transpositions (of letters in the words).  It follows that
\begin{equation*}
\sign(\underline{v}_k)=(-1)^{\text{Kn}(\underline{v}_k)}\sign(P_k),
\end{equation*}
where $\text{Kn}(\underline{v}_k)$ is defined to be the number of elementary Knuth transformations needed to re-arrange $\underline{v}_k$ into $w_r(P_k)$.  Its residue modulo 2 is just the sign of the minimal length permutation sorting $\underline{v}_k$ into $w_r(P_k)$.  What needs to be shown, then, is that
\begin{equation}\label{eqn-rsk-proof}\begin{split}
(-1)^{\text{Kn}(\underline{v}_k)}&=(-1)^{\binom{\lambda_{(k)}^T}{2}}\sign(Q_k)\\
&=(-1)^{(\lambda_{(k)})_2+(\lambda_{(k)})_4+(\lambda_{(k)})_6+\ldots}\sign(Q_k).
\end{split}\end{equation}
Since this is clearly true for $k=1$, we can prove this inductively by considering what happens when a new box is added.

Suppose when a new box with label $v_{k+1}$ is added, it ends up in row $s+1$ ($s\geq0$).  In terms of tableaux, this means $s$ boxes were bumped.  Each time a bumping occurs in row $j$, the number of elementary Knuth transformations which take place is $(\lambda_{(k)})_j-1$.  So
\begin{equation*}
\text{Kn}(\underline{v}_{k+1})=\text{Kn}(\underline{v}_k)+\sum_{j=1}^s((\lambda_{(k)})_j-1).
\end{equation*}
And by definition of $s$, the sign $(-1)^{\lambda_2+\lambda_4+\lambda_6+\ldots}$ changes by $(-1)^s$ in passing from $\lambda_{(k)}$ to $\lambda_{(k+1)}$.  Finally,
\begin{equation*}
\sign(Q_{k+1})=(-1)^{(\lambda_{(k)})_1+\ldots+(\lambda_{(k)})_s}\sign(Q_k),
\end{equation*}
since $u_k$ is greater than any label it passes through in sorting $w_r(Q_k)u_{k+1}$ to $w_r(Q_{k+1})$.  We see that the sign changes in the factors of equation \eqref{eqn-rsk-proof} cancel, so the sign equality is preserved under the addition of a new box.  The completes the proof of the theorem.\end{proof}

\begin{example} We return to Example \ref{ex-rsk-1}.  Next to each combinatorial object (including the semistandard Young tableau shapes), we put the associated sign.
\begin{align*}
+\begin{pmatrix}1&0\\1&0\\0&1\\\end{pmatrix}\qquad&+\yng(3)&
+\young(112),\quad&+\young(123),\\
-\begin{pmatrix}1&0\\0&1\\1&0\\\end{pmatrix}\qquad&-\yng(2,1)&
+\young(11,2),\quad&+\young(12,3),\\
+\begin{pmatrix}0&1\\1&0\\1&0\\\end{pmatrix}\qquad&-\yng(2,1)&
+\young(11,2),\quad&-\young(13,2).
\end{align*}
\end{example}

\begin{example}We return to Example \ref{ex-rsk-2}.
\begin{align*}
+\begin{pmatrix}2&0&0\\0&1&1\\\end{pmatrix}\qquad&+\yng(4)&
+\young(1123),\quad&+\young(1122),\\
-\begin{pmatrix}1&1&0\\1&0&1\\\end{pmatrix}\qquad&-\yng(3,1)&
+\young(113,2),\quad&+\young(112,2),\\
+\begin{pmatrix}1&0&1\\1&1&0\\\end{pmatrix}\qquad&-\yng(3,1)&
-\young(112,3),\quad&+\young(112,2),\\
+\begin{pmatrix}0&1&1\\2&0&0\\\end{pmatrix}\qquad&+\yng(2,2)&
+\young(11,23),\quad&+\young(11,22).
\end{align*}
\end{example}

\vspace{0.07in}

We remark that rather than keeping track of the sign of a matrix, then of a two-row array, and finally of two tableaux, we could instead have worked in the \textit{odd plactic ring}, $\Z Pl_{-1}$:
\begin{equation}\begin{split}
\text{generators:}\qquad&A\\
\text{relations:}\qquad&yzx=-yxz\qquad\text{if }x<y\leq z\qquad(K'),\\
&xzy=-zxy\qquad\text{if }x\leq y<z\qquad(K'').
\end{split}\end{equation}
Instead of keeping track of signs associated to various combinatorial objects, then, we could have simply kept track of the sign of certain coefficients in $\Z Pl_{-1}$.

%
\section{Appendix: Data}\label{sec-data}
%

%
\subsection{Bases of $\sym$}\label{subsec-data-bases}
%

In this subsection, we express the monomial, forgotten, and Schur functions in terms of the complete functions for low degrees.

\vspace{0.07in}

Through degree 4, the odd monomial and forgotten functions are:
\begin{align*}
&m_1	=	h_1			&	&m_{1111}	=	h_{1111}-h_{211}+h_{22}-h_4	\\
&m_{11}	=	-h_{11}+h_2	&	&m_{211}	=	-h_{1111}+h_{31}			\\
&m_2	=	h_{11}		&	&m_{22}	=	h_{1111}+h_{22}-2h_4			\\
&m_{111}	=	-h_{111}+h_3	&	&m_{31}	=	h_{211}-h_{31}				\\
&m_{21}	=	-h_{21}+h_3	&	&m_4	=	-h_{1111}-2h_{22}+4h_4		\\
&m_3	=	h_{111}+h_{21}-h_3	&&	\\
&&&\\
&f_1		=	h_1			&	&f_{1111}	=	h_4					\\
&f_{11}	=	h_2			&	&f_{211}	=	h_{31}				\\
&f_2		=	h_{11}		&	&f_{22}	=	-h_{22}+2h_4			\\
&f_{111}	=	h_3			&	&f_{31}	=	h_{211}-h_{31}			\\
&f_{21}	=	-h_{21}+h_3	&	&f_4		=	h_{1111}+2h_{22}-4h_4	\\
&f_3		=	h_{111}+h_{21}-h_3.
\end{align*}

\vspace{0.07in}

Below are odd Kostka numbers through degree 5.  Partitions are listed lexicographically, with shape parametrizing the rows and content parametrizing the columns.

\vspace{0.07in}

\small{
\begin{tabular}{| c | c |}
\hline
deg. 1	&	$(1)$	\\
\hline
$(1)$	&	1		\\
\hline
\end{tabular}

\vspace{0.07in}

\begin{tabular}{| c | c c |}
\hline
deg. 2	&	$(11)$	&	(2)		\\
\hline
$(11)$	&	1		&	0		\\
$(2)$		&	1		&	1		\\
\hline
\end{tabular}

\vspace{0.07in}

\begin{tabular}{| c | c c c |}
\hline
deg. 3	&	$(111)$	&	$(21)$	&	$(3)$		\\
\hline
$(111)$	&	1		&	0		&	0		\\
$(21)$	&	0		&	1		&	0		\\
$(3)$		&	1		&	1		&	1		\\
\hline
\end{tabular}

\vspace{0.07in}

\begin{tabular}{| c | c c c c c |}
\hline
deg. 4	&	$(1111)$	&	$(211)$	&	$(22)$	&	$(31)$	&	$(4)$		\\
\hline
$(1111)$	&	1		&	0		&	0		&	0		&	0		\\
$(211)$	&	1		&	1		&	0		&	0		&	0		\\
$(22)$	&	0		&	1		&	1		&	0		&	0		\\
$(31)$	&	1		&	0		&	$-1$		&	1		&	0		\\
$(4)$		&	1		&	1		&	1		&	1		&	1		\\
\hline
\end{tabular}

\vspace{0.07in}

\begin{tabular}{| c | c c c c c c c |}
\hline
deg. 5	&	$(11111)$	&	$(2111)$	&	$(221)$	&	$(311)$	&	$(32)$	&	$(41)$	&	$(5)$		\\
\hline
$(11111)$	&	1		&	0		&	0		&	0		&	0		&	0		&	0		\\
$(2111)$	&	0		&	1		&	0		&	0		&	0		&	0		&	0		\\
$(221)$	&	$-1$		&	0		&	1		&	0		&	0		&	0		&	0		\\
$(311)$	&	2		&	1		&	$-1$		&	1		&	0		&	0		&	0		\\
$(32)$	&	1		&	1		&	0		&	1		&	1		&	0		&	0		\\
$(41)$	&	0		&	1		&	2		&	0		&	$-1$		&	1		&	0		\\
$(5)$		&	1		&	1		&	1		&	1		&	1		&	1		&	1		\\
\hline
\end{tabular}
}

\vspace{0.07in}

\normalsize{
\noindent Using the above Kostka numbers, we can compute odd Schur functions.  Through degree 5 they are, in the $h$-basis,
\begin{align*}
&s_1		=	h_1						&	&s_{11111}	=	h_{11111}+h_{221}-h_{311}-2h_{41}+h_5	\\
&s_{11}	=	h_{11}-h_2					&	&s_{2111}	=	h_{2111}-h_{311}-h_{41}+h_5			\\
&s_2		=	h_2						&	&s_{221}	=	h_{221}+h_{311}-h_{32}-3h_{41}+2h_5		\\
&s_{111}	=	h_{111}-h_3				&	&s_{311}	=	h_{311}-h_{32}-h_{41}+h_5				\\
&s_{21}	=	h_{21}-h_3					&	&s_{32}	=	h_{32}+h_{41}-2h_5					\\
&s_3		=	h_3						&	&s_{41}	=	h_{41}-h_5							\\
&s_{1111}	=	h_{1111}-h_{211}+h_{22}-h_4	&	&s_5		=	h_5								\\
&s_{211}	=	h_{211}-h_{22}-h_{31}+h_4		&&	\\
&s_{22}	=	h_{22}+h_{31}-2h_4			&&	\\
&s_{31}	=	h_{31}-h_4					&&	\\
&s_4		=	h_4						&&	\\
\end{align*}}

%
\subsection{The bilinear form}\label{subsec-data-bilinear-form}
%

\normalsize{In this subsection, we present some low degree computations regarding the bilinear forms on $\sym'$ and $\sym$.  

\vspace{0.07in}

Let $[n]=1+q+q^2+\ldots+q^{n-1}$ be the (unbalanced) $q$-number and let $[n]!=[n][n-1]\cdots[2][1]$ be the corresponding $q$-factorial.  The bilinear form for unspecialized $q$ is (the form is symmetric, so ``$*$'' stands for the matching entry above the diagonal):}

\vspace{0.07in}

\small{
\begin{tabular}{| c | c |}
\hline
deg. 1	&	$h_1$	\\
\hline
$h_1$	&	1		\\
\hline
\end{tabular}

\vspace{0.07in}

\begin{tabular}{| c | c c |}
\hline
deg. 2	&	$h_{11}$	&	$h_2$	\\
\hline
$h_{11}$	&	[2]		&	1		\\
$h_2$	&	*		&	1		\\
\hline
\end{tabular}

\vspace{0.07in}

\begin{tabular}{| c | c c c c |}
\hline
deg. 3	&	$h_{111}$	&	$h_{12}$	&	$h_{21}$	&	$h_3$	\\
\hline
$h_{111}$	&	$[3]!$	&	$[3]$		&	$[3]$		&	1		\\
$h_{12}$	&	*		&	$[2]$		&	$1+q^2$	&	1		\\
$h_{21}$	&	*		&	*		&	$[2]$		&	1		\\
$h_3$	&	*		&	*		&	*		&	1		\\
\hline
\end{tabular}

\vspace{0.07in}

\begin{tabular}{| c | c c c c c c c c |}
\hline
deg. 4		&	$h_{1111}$	&	$h_{112}$	&	$h_{121}$	&	$h_{211}$	&	$h_{22}$		&	$h_{13}$		&	$h_{31}$		&	$h_4$	\\
\hline
$h_{1111}$	&	$[4]!$	&	$[4][3]$		&	$[4][3]$			&	$[4][3]$		&	$[5]+q^2$		&	$[4]$			&	$[4]$			&	1	\\
$h_{112}$	&	*		&	$[5]+q[2]$		&	$[5]+q^2[2]$		&	$[6]+q^2$		&	$[3]+q^4$		&	$[3]$			&	$[1]+q^2[2]$	&	1	\\
$h_{121}$	&	*		&	*			&	$[4]+q+q^3+q^5$	&	$[5]+q^2[2]$	&	$1+2q^2+q^3$	&	$[2]+q^3$		&	$[2]+q^3$		&	1	\\
$h_{211}$	&	*		&	*			&	*				&	$[5]+q[2]$		&	$[3]+q^4$		&	$1+q^2[2]$	&	$[3]$			&	1	\\
$h_{22}$	&	*		&	*			&	*				&	*			&	$[2]+q^4$		&	$1+q^2$		&	$1+q^2$		&	1	\\
$h_{13}$	&	*		&	*			&	*				&	*			&	*			&	$[2]$			&	$1+q^3$		&	1	\\
$h_{31}$	&	*		&	*			&	*				&	*			&	*			&	*				&	$[2]$			&	1	\\
$h_4$	&	*		&	*		&	*				&	*			&	*			&	*				&	*			&	1	\\
\hline
\end{tabular}
}

\normalsize{With $q=-1$, the bilinear form on the quotient $\sym$ is (through degree 6):}

\vspace{0.07in}

\small{
\begin{tabular}{| c | c |}
\hline
deg. 1	&	$h_1$	\\
\hline
$h_1$	&	1		\\
\hline
\end{tabular}

\vspace{0.07in}

\begin{tabular}{| c | c c |}
\hline
deg. 2	&	$h_{11}$	&	$h_2$	\\
\hline
$h_{11}$	&	0		&	1		\\
$h_2$	&	1		&	1		\\
\hline
\end{tabular}

\vspace{0.07in}

\begin{tabular}{| c | c c c |}
\hline
deg. 3	&	$h_{111}$	&	$h_{21}$	&	$h_3$	\\
\hline
$h_{111}$	&	0		&	1		&	1		\\
$h_{21}$	&	1		&	0		&	1		\\
$h_3$	&	1		&	1		&	1		\\
\hline
\end{tabular}

\vspace{0.07in}

\begin{tabular}{| c | c c c c c |}
\hline
deg. 4		&	$h_{1111}$	&	$h_{211}$	&	$h_{22}$	&	$h_{31}$	&	$h_4$	\\
\hline
$h_{1111}$		&	0		&	0			&	2		&	0		&	1		\\
$h_{211}$		&	0		&	1			&	2		&	1		&	1		\\
$h_{22}$		&	2		&	2			&	1		&	2		&	1		\\
$h_{31}$		&	0		&	1			&	2		&	0		&	1		\\
$h_4$		&	1		&	1			&	1		&	1		&	1		\\
\hline
\end{tabular}

\vspace{0.07in}

\begin{tabular}{| c | c c c c c c c |}
\hline
deg. 5		&	$h_{11111}$	&	$h_{2111}$	&	$h_{221}$	&	$h_{311}$	&	$h_{32}$	&	$h_{41}$	&	$h_5$	\\
\hline
$h_{11111}$	&	0			&	0			&	2			&	0			&	2		&	1		&		1		\\
$h_{2111}$		&	0			&	1			&	0			&	1			&	3		&	0		&		1		\\
$h_{221}$		&	2			&	0			&	$-3$			&	2			&	3		&	$-1$		&		1		\\
$h_{311}$		&	0			&	1			&	2			&	1			&	2		&	1		&		1		\\
$h_{32}$		&	2			&	3			&	3			&	2			&	1		&	2		&		1		\\
$h_{41}$		&	1			&	0			&	$-1$			&	1			&	2		&	0		&		1		\\
$h_5$		&	1			&	1			&	1			&	1			&	1		&	1		&		1		\\
\hline
\end{tabular}

\vspace{0.07in}

\begin{tabular}{| c | c c c c c c c c c c c |}
\hline
deg. 6		&	$h_{111111}$	&	$h_{21111}$	&	$h_{2211}$	&	$h_{222}$	&	$h_{3111}$	&	$h_{321}$	&	$h_{33}$	&	$h_{411}$	&	$h_{42}$	&	$h_{51}$	&	$h_6$	\\
\hline
$h_{111111}$	&	0			&	0			&	0		&	6		&	0		&	0		&	0		&	0		&	3		&	0		&	1		\\
$h_{21111}$	&	0			&	0			&	2		&	6		&	0		&	2		&	2		&	1		&	3		&	1		&	1		\\
$h_{2211}$		&	0			&	2			&	4		&	3		&	2		&	4		&	4		&	2		&	2		&	2		&	1		\\
$h_{222}$		&	6			&	6			&	3		&	$-3$		&	6		&	5		&	5		&	3		&	0		&	3		&	1		\\
$h_{3111}$		&	0			&	0			&	2		&	6		&	0		&	$-1$		&	0		&	1		&	3		&	0		&	1		\\
$h_{321}$		&	0			&	2			&	4		&	5		&	$-1$		&	$-4$		&	$-2$		&	2		&	3		&	$-1$		&	1		\\
$h_{33}$		&	0			&	2			&	4		&	5		&	0		&	$-2$		&	0			&	2		&	3		&	0		&	1		\\
$h_{411}$		&	0			&	1			&	2		&	3		&	1		&	2		&	2		&	1		&	2		&	1		&	1		\\
$h_{42}$		&	3			&	3			&	2		&	0		&	3		&	3		&	3		&	2		&	1		&	2		&	1		\\
$h_{51}$		&	0			&	1			&	2		&	3		&	0		&	$-1$		&	0		&	1		&	2		&	0		&	1		\\
$h_6$		&	1			&	1			&	1		&	1		&	1		&	1		&	1		&	1		&	1		&	1		&	1		\\
\hline
\end{tabular}
}

\vspace{0.07in}

\normalsize{The following are the minimal polynomials for the values of $q$ at which the $q$-bilinear form is degenerate, through degree 7.  (The same values are plotted in Figure 1 of \cite{ThibonUng}.)  Since a nontrivial relation in degree $k$ causes nontrivial relations in all higher degrees, we only list the new minimal polynomials in each degree.  Note that all are monic and with constant coefficient 1, so their roots are units in the ring of algebraic integers.  In fact, most (but not all) are roots of unity.  All of the polynomials are palindromic, so for longer ones we use ellipses ($\ldots$) to denote palindromic continuation.  For instance, $q^5+7q^4-q^3+\ldots$ would mean $q^5+7q^4-q^3-q^2+7q+1$ and $q^4-q^3+2q^2+\ldots$ would mean $q^4-q^3+2q^2-q+1$.  In parentheses, we give the multiplicities in the various degrees in the following format: ((5,1),(6,4),(7,10)) means multiplicity 1 in degree 5, multiplicity 4 in degree 6, and multiplicity 10 in degree 7.

\vspace{0.07in}

\noindent degree 2:
\begin{itemize}
\item $q$ ((2,1),(3,5),(4,17),(5,49),(6,129),(7,321))
\end{itemize}

\vspace{0.07in}

\noindent degree 3:
\begin{itemize}
\item $q-1$ ((3,1),(4,4),(5,14),(6,38),(7,102))
\item $q+1$ ((3,1),(4,4),(5,12),(6,34),(7,88))
\end{itemize}

\vspace{0.07in}

\noindent degree 4:
\begin{itemize}
\item $q^6+2q^4-q^3+2q^2+1$ ((4,1),(5,2),(6,5),(7,12)) (not a root of unity)
\end{itemize}

\vspace{0.07in}

\noindent degree 5:
\begin{itemize}
\item $q^2+q+1$ ((5,2),(6,6),(7,18)) (3rd root of unity)
\item $q^2-q+1$ ((5,1),(6,4),(7,11)) (6th root of unity)
\item $q^{18}+q^{17}+3q^{16}+4q^{15}+6q^{14}+7q^{13}+8q^{12}+10q^{11}+11q^{10}+10q^9+\ldots$ ((5,1),(6,2),(7,5)) (not a root of unity)
\end{itemize}

\vspace{0.07in}

\noindent degree 6:
\begin{itemize}
\item $q^2+1$ ((6,2),(7,8)) (4th root of unity)
\item $q^{10}-q^9+q^8-q^7+q^6-q^5+\ldots$ ((6,1),(7,2)) (22nd root of unity)
\item $q^{50}+q^{49}+2q^{48}+2q^{47}+5q^{46}+4q^{45}+8q^{44}+6q^{43}+11q^{42}+9q^{41}+16q^{40}+16q^{39}+21q^{38}+14q^{37}+23q^{36}+24q^{35}+30q^{34}+23q^{33}+30q^{32}+28q^{31}+38q^{30}+30q^{29}+34q^{28}+30q^{27}+39q^{26}+34q^{25}+\ldots$ ((6,1),(7,2)) (not a root of unity)
\end{itemize}

\vspace{0.07in}

\noindent degree 7:
\begin{itemize}
\item $q^4+q^3+q^2+q+1$ ((7,2)) (5th root of unity)
\item $q^6+q^5+q^4+q^3+q^2+q+1$ ((7,1)) (7th root of unity)
\item $q^4+1$ ((7,1)) (8th root of unity)
\item $q^4-q^3+q^2-q+1$ ((7,1)) (10th root of unity)
\item $q^4-q^2+1$ ((7,1)) (12th root of unity)
\item $q^{16}+q^{15}+q^{14}+q^{13}+q^{12}+q^{11}+q^{10}+q^9+q^8+\ldots$ ((7,1)) (17th root of unity)
\item $q^{12}-q^{10}+q^8-q^6+q^4-q^2+1$ ((7,1)) (28th root of unity)
\item $q^{102}-q^{101}+4q^{100}-2q^{99}+9q^{98}-2q^{97}+18q^{96}-q^{95}+34q^{94}+2q^{93}+58q^{92}+13q^{91}+88q^{90}+36q^{89}+134q^{88}+64q^{87}+204q^{86}+99q^{85}+298q^{84}+155q^{83}+405q^{82}
+238q^{81}+537q^{80}+330q^{79}+705q^{78}+442q^{77}+887q^{76}+584q^{75}+1089q^{74}+731q^{73}+1323q^{72}+881q^{71}+1572q^{70}
+1050q^{69}+1808q^{68}+1233q^{67}+2045q^{66}+1401q^{65}+2284q^{64}+1565q^{63}+2494q^{62}+1716q^{61}+2692q^{60}+1829q^{59}
+2874q^{58}+1926q^{57}+2995q^{56}+2018q^{55}+3067q^{54}+2070q^{53}+3118q^{52}+2080q^{51}+\ldots$ ((7,1)) (not a root of unity)
\end{itemize}

\vspace{0.07in}

In light of the above data, a few remarks on the determinant of the bilinear form are in order.  It is immediate from the definition of the bilinear form that this determinant is monic in $q$.  It is not hard to see that its degree is given by
\begin{equation}\begin{split}\label{eqn-deg-1}
D&=\sum_\alpha\left(\frac{1}{2}n(n-1)-\sum_{i=1}^{\ell(\alpha)}\frac{1}{2}\alpha_i(\alpha_i-1)\right)\\
&=2^{n-2}n(n-1)-\frac{1}{2}\sum_\alpha\sum_{i=1}^{\ell(\alpha)}\alpha_i(\alpha_i-1).
\end{split}\end{equation}
The outer summation is over all compositions $\alpha$ of $n$, and $\ell(\alpha)$ is the length of the composition $\alpha$.  Let
\begin{equation*}
A_n=\sum_\alpha\sum_{i=1}^{\ell(\alpha)}\alpha_i(\alpha_i-1)
\end{equation*}
be the double summation of equation \eqref{eqn-deg-1}.  Re-indexing so as to first sum over the first entry of each composition, we see
\begin{equation*}
A_n=n(n-1)+\sum_{k=1}^{n-1}\left[2^{n-k-1}k(k-1)+A_{n-k}\right].
\end{equation*}
This recursion can be solved as
\begin{equation*}
A_n=2+2^n(n-2),
\end{equation*}
from which we conclude that the degree of the determinant of the bilinear form, as a polynomial in $q$, equals
\begin{equation}\label{eqn-deg-2}
D=2^{n-2}\left(n^2-3n+4\right)-1.
\end{equation}}

\addcontentsline{toc}{section}{References}


\bibliographystyle{plain}
\bibliography{ellis-bib}

%

\vspace{0.1in}

\noindent A.P.E.: { \sl \small Department of Mathematics, Columbia University, New
York, NY 10027, USA} \newline \noindent {\tt \small email: ellis@math.columbia.edu}

\vspace{0.1in}

\noindent M.K.: { \sl \small Department of Mathematics, Columbia University, New
York, NY 10027, USA} \newline \noindent {\tt \small email: khovanov@math.columbia.edu}

%
\end{document}